\newcommand{\da}[1]{{\color{black}{ #1}}}
\numberwithin{equation}{section}
\numberwithin{figure}{section}
\begin{document}
\newtheorem{theorem}{Theorem}[section] \newtheorem{lemma}[theorem]{Lemma}
\newtheorem{definition}[theorem]{Definition} \newtheorem{example}[theorem]{Example}
\newtheorem{proposition}[theorem]{Proposition} \newtheorem{corollary}[theorem]{Corollary}
\newtheorem{conjecture}[theorem]{Conjecture} 
\newtheorem{problem}[theorem]{Problem}
\newtheorem{question}[theorem]{Question}
\newtheorem{remark}{Remark} 
\newtheorem{main}{Main Theorem}
\global\long\def\wta{{\rm {wt} } a }

 \global\long\def\R{\frak{R}}
\global\long\def\<{\langle}
\newcommand{\bea}{\begin{eqnarray}}
\newcommand{\eea}{\end{eqnarray}}
\newcommand{\wh}{ {\bf w}_{ \bm{ \lambda}, \bm{ \mu} }}
\newcommand{\whf}{ {\bf w}_{ \bm{ \alpha}, \bm{ \beta} }}
\newcommand{\whab}{ {\bf w}_{ \bm{ a}, \bm{ b} }}
 \global\long\def\LL{\mathcal{L}}
 \global\long\def\>{\rangle}
 \global\long\def\t{\tau}
 \global\long\def\a{\alpha}
 \global\long\def\e{\epsilon}
 \global\long\def\l{\lambda}
 \global\long\def\ga{\gamma}
 \global\long\def\L{ L}
 \global\long\def\b{\beta}
 \global\long\def\om{\omega}
 \global\long\def\o{\omega}
 \global\long\def\c{\chi}
 \global\long\def\ch{\chi}
 \global\long\def\cg{\chi_{g}}
 \global\long\def\ag{\alpha_{g}}
 \global\long\def\ah{\alpha_{h}}
 \global\long\def\ph{\psi_{h}}
 \global\long\def\gi{\gamma}
 \global\long\def\supp{{\rm {supp}}}
 \global\long\def\GG{\mathcal{G}}
 \global\long\def\NN{\mathcal{N}}
 \global\long\def\wtb{{\rm {wt} } b }
   \global\long\def\ee{{equation}}
 \global\long\def\g{\frak{g}}
 \global\long\def\tg{\tilde{\frak{g}} }
 \global\long\def\hg{\hat{\frak{g}} }
 \global\long\def\hb{\hat{\frak{b}} }
 \global\long\def\hn{\hat{\frak{n}} }
 \global\long\def\h{\frak{h}}
 \global\long\def\wt{{\rm {wt} } }
 \global\long\def\V{{\cal V}}
 \global\long\def\hh{\hat{\frak{h}} }
 \global\long\def\n{\frak{n}}
 \global\long\def\Z{\mathbb{Z}}
 \global\long\def\lar{\longrightarrow}
 \global\long\def\X{\mathfrak{X}}

\global\long\def\Zp{{\Bbb Z}_{\ge0} }

\global\long\def\N{\mathbb{N}}
 \global\long\def\C{\mathbb{C}}
 \global\long\def\Q{\Bbb Q}
 \global\long\def\WW{\boldsymbol{\mathcal{W}}}
\global\long\def\gl{\mathfrak{gl}}
 \global\long\def\sl{\mathfrak{sl}}
 \global\long\def\la{\langle}
 \global\long\def\ra{\rangle}
 \global\long\def\bb{\mathfrak{b}}
 \global\long\def\triplet{\mathcal{W}(p)}
 \global\long\def\striplet{\mathcal{SW}(m)}

\global\long\def\nordplus{\mbox{\scriptsize\ensuremath{{+\atop +}}}}
 \global\long\def\ds{{\displaystyle }}
\global\long\def\halmos{\rule{1ex}{1.4ex}}
 \global\long\def\pfbox{\hspace*{\fill}\mbox{\ensuremath{\halmos}}}
\global\long\def\epfv{\hspace*{\fill}\mbox{\ensuremath{\halmos}}}
 \global\long\def\nb{{\mbox{\tiny{ \ensuremath{{\bullet\atop \bullet}}}}}}
\global\long\def\nn{\nonumber}

\global\long\def\HH{\widetilde{\mathcal{T}}}
 \global\long\def\vak{{\bf 1}}

\global\long\def\sect#1{\section{#1}\setcounter{equation}{0}\setcounter{rema}{0}}
 \global\long\def\ssect#1{\subsection{#1}}
 \global\long\def\sssect#1{\subsubsection{#1}}


\title[Whittaker modules for  $\widehat{\mathfrak gl}$ and   $\mathcal W_{1+ \infty}$]{Whittaker modules for  $\widehat{\mathfrak gl}$ and   $\mathcal W_{1+ \infty}$--modules  which are not tensor products }

\author{Dra\v{z}en Adamovi\'c,  Veronika Pedi\' c Tomi\' c  }

 \keywords{vertex algebra, Whittaker modules,   $\mathcal W_{1+ \infty}$, typical modules }
\subjclass[2010]{Primary    17B69; Secondary 17B20, 17B65}

\begin{abstract}
We consider the Whittaker modules $M_{1}(\bm{\lambda},\bm{\mu})$  for the Weyl vertex algebra $M$ (also called $\beta \gamma$ vertex algebra), constructed in \cite{ALPY}, where it was proved that these modules are irreducible for each finite cyclic orbifold $M^{\Z_n}$. In this paper, we consider the modules $M_{1}(\bm{\lambda},\bm{\mu})$ as modules for the ${\Z}$--orbifold of $M$, denoted by $M^0$. $M^0$   is isomorphic to the vertex algebra
$\mathcal W_{1+\infty, c=-1} =   \mathcal M(2) \otimes M_1(1)$ which is the  tensor product of the Heisenberg vertex algebra $M_1(1)$ and the singlet algebra $\mathcal M(2)$   (cf. \cite{A-singlet},  \cite{KR}, \cite{Wa}). Furthermore, these modules are also modules 
of  the Lie algebra  $\widehat{\mathfrak gl}$ with central charge $c=-1$. We prove that  they are reducible as $\widehat{\mathfrak gl}$--modules (and therefore also as $M^0$--modules), and we completely describe their irreducible quotients
$L(d,\bm{\lambda},\bm{\mu})$.  
We show that $L(d,\bm{\lambda},\bm{\mu})$ in   most cases are  not tensor product modules for the vertex algebra $ \mathcal M(2) \otimes M_1(1)$. Moreover, we show that all constructed modules are typical  in the sense that they are irreducible  for the Heisenberg-Virasoro vertex subalgebra of $\mathcal W_{1+\infty, c=-1}$.
\end{abstract}
\maketitle
\tableofcontents

\section{Introduction}
 
  
The representation theory of Weyl algebras provides an important tool in the construction of modules for finite and infinite-dimensional Lie algebras, and vertex algebras. Let $\widehat{\mathcal A}$ denote the Weyl algebra, and let $M$ denote the associated Weyl vertex algebra ($\beta \gamma$--system) of rank one. In \cite{ALPY}, we have proved that irreducible Whittaker modules $M_{1}(\bm{\lambda},\bm{\mu})$ 
for the Weyl vertex algebra $M$ remain irreducible when we restrict them to the ${\Z}_n$--invariant subalgebra $M ^{\Z_n}$. In this paper, we consider a limit case and restrict these modules to a ${\Z}$--invariant subalgebra $M^0$. Surprisingly, we prove that the irreducible Weyl modules of Whittaker type are always reducible as $M^0$--modules

 However, the analysis of these reducible modules leads to many interesting observations and new constructions. 

First, we recall that the $\Z$--invariant subalgebra $M^0$ of the rank one Weyl vertex algebra $M$ is isomorphic to the simple  vertex algebra
 $\mathcal W_{1+\infty, c}$ 
 at central charge $c=-1$ (cf. \cite{KR},  \cite{Wa}).
  Thus every Whittaker module for the Weyl vertex algebra is a module for $\mathcal W_{1+\infty, c=-1}$.
On the other hand, the representation theory of the vertex algebra $\mathcal W_{1+\infty, c}$ is deeply connected with the representation theory of the Lie algebra $\widehat{\mathfrak gl}$, which is a central extension of the Lie algebra of infinite matrices (cf. \cite{KR0}, \cite{KR}). More precisely, the universal  vertex algebra $\mathcal W^c _{1+\infty}$  is 
the vacuum module of central charge $c$  for the Lie algebra  $\widehat{\mathcal D}$, which is the central extension of the Lie algebra of complex regular differential operators on ${\C}^{*}$. Moreover, there exists a homomorphism $\Phi_0 : \widehat{\mathcal D} \rightarrow \widehat{\mathfrak gl}$, defined by (\ref{formula-D}); hence each $\widehat{\mathfrak gl}$--module of central charge $c$ is a $\widehat{\mathcal D}$--module  of the same central charge. If the resulting module is restricted, we get a $\mathcal W^c _{1+\infty}$--module.  Using the Weyl (vertex) algebras, one can construct 
 $\widehat{\mathfrak gl}$--modules which become modules for the simple vertex algebra $\mathcal W_{1+\infty, c}$.
Given the Lie homomorphism  $\widehat{\mathfrak gl} \rightarrow \widehat{\mathcal A}$ defined by 
 \bea  E_{i,j} \mapsto :a(-i) a^*(j):,   \quad 
C\mapsto  -1, \nonumber
\eea
every restricted  $\widehat{\mathcal A}$--module has the structure of a $\widehat{\mathfrak gl}$--module, and  one shows that they are also $\mathcal W_{1+\infty, c=-1}$--modules.

Similarly as in the case of the quasi-finite modules from \cite{KR}, we show in Theorem  \ref{structure-equivalence} that the structure of the module $M_{1}(\bm{\lambda},\bm{\mu})$ as a
 $\mathcal W_{1+\infty, c=-1}$--module is equivalent to the structure of  the same module as a $\widehat{\mathfrak gl}$--module.


As far as we have noticed, Whittaker modules for the Lie algebra $\widehat{\mathfrak gl}$ and for the vertex algebra $\mathcal W_{1+\infty}$ haven't been investigated in the literature yet.

 Hence our main task is to analyze $M_{1}(\bm{\lambda},\bm{\mu})$  as a $\widehat{\mathfrak gl}$--module. This will directly imply the structure of $M_{1}(\bm{\lambda},\bm{\mu})$ as a $\mathcal W_{1+\infty}$--module.
 We prove:
 
 \begin{itemize}
 \item  $M_{1}(\bm{\lambda},\bm{\mu})$ is a cyclic $\widehat{\mathfrak gl}$--module of Whittaker type for the Whittaker pair $(\widehat{\mathfrak gl}, \mathfrak p)$, where $\mathfrak p$ is a certain commutative subalgebra of upper triangular matrices of  
 $\widehat{\mathfrak gl}$. Thus, $M_{1}(\bm{\lambda},\bm{\mu})$ is not a classical Whittaker module for $\widehat{\mathfrak gl}$, but a generalized Whittaker module in the sense of \cite{BM}.
 
 \item In order to see that $M_{1}(\bm{\lambda},\bm{\mu})$ is not irreducible, we use the Casimir element $I = \sum_{i} E_{i,i}$ of
  $U( \widehat{\mathfrak gl})$. We show that $I$ acts non-trivially on $M_{1}(\bm{\lambda},\bm{\mu})$.
 
 \item We prove that ${\C}[I]{\wh}$ provides all
 Whittaker vectors in $M_{1}(\bm{\lambda},\bm{\mu})$ and that for every $d \in {\C}$, the quotient
 $$ L(d, \bm{\lambda},\bm{\mu}) =  \frac{ M_{1}(\bm{\lambda},\bm{\mu})}{ \langle (I-d) {\C}[I] {\wh}  \rangle }$$
 is simple  both as a $\widehat{\mathfrak gl}$--module and as $\mathcal W_{1+\infty,c=-1}$--module.
 \end{itemize}
 
 \subsection*{Free field realization of  $L(d, \bm{\lambda},\bm{\mu})$.} Note that  modules $L(d, \bm{\lambda},\bm{\mu})$ are constructed as quotients of Whittaker Weyl modules. A natural question is to provide an explicit realization of these simple modules.

 There exist another realization of Whittaker 
 $\mathcal W_{1+\infty,c}$--modules which are based on the fact that  $\mathcal W_{1+\infty,c}$ can be embedded into Heisenberg vertex algebra for integral central charges. Let $M_n(1)$ denotes the Heisenberg vertex algebra generated by $n$ Heisenberg fields.
 \begin{itemize}
 \item  $\mathcal W_{1+\infty,c=1}$ is isomorphic to $M_1(1)$. 
 \item $\mathcal W_{1+\infty,c=n}$ is isomorphic to  the principal W-algebra $\mathcal W_1(\mathfrak{gl}(n), f_{princ})$ which is a vertex subalgebra of  $M_n(1)$.  (cf. \cite{FKRW}).
 \item $\mathcal W_{1+\infty,c=-1}$ is isomorphic to the tensor product $\mathcal M(2) \otimes M_1(1)$, where $\mathcal M(2)$ is the singlet vertex algebra  at central charge $-2$ (isomorphic to the simple  $\mathcal W(2,3)$-algebra at $c=-2$) and $M_1(1)$ is rank one Heisenberg vertex algebra (cf. \cite{Wa}). In particular, $\mathcal W_{1+\infty,c=-1}$ is embedded into $M_2(1)$.
  \item In general, $\mathcal W_{1+\infty,c=-n}$ is embedded into  $M_{2n}(1)$ (cf. \cite{A-2001}).  
 \end{itemize}
 To the best of our knowledge, the following problem is still unsolved:
 \begin{problem} For embedding $\mathcal W_{1+\infty,c} \hookrightarrow M_n(1)$ mentioned above, identify Whittaker $M_n(1)$--modules as Whittaker $\mathcal W_{1+\infty,c}$--module.
 \end{problem}
 
 We will be focused on the case $c=-1$. Then $\mathcal W_{1+\infty,c=-1}$ is a subalgebra of $M_2(1)$ and every Whittaker $M_2(1)$--module is naturally Whittaker $\mathcal W_{1+\infty,c=-1}$--module. Then by using the isomorphism
 $\mathcal W_{1+\infty,c=-1} \cong \mathcal M(2) \otimes M_1(1)$  one can realize a family of Whittaker modules as the tensor product modules
 \bea
 && Z_1 \otimes Z_2 \label{form-decomp} \eea
  where  $Z_1$ is an  irreducible Whittaker $\mathcal M(2)$--module and $Z_2$ is an irreducible Whittaker $M_1(1)$--module. K. Tanabe showed in \cite{T2} that every  Whittaker $\mathcal M(p)$--module is obtained from Whittaker modules for the Heisenberg vertex algebra. Therefore, we have a family of irreducible $\mathcal W_{1+\infty,c=-1}$--modules obtained using free-field realization from the  Whittaker $M_2(1)$--modules.
 \begin{question}
Can we realize $L(d, \bm{\lambda},\bm{\mu})$  as irreducible Whittaker $M_2(1)$--modules? This is equivalent to the question can we represent $L(d, \bm{\lambda},\bm{\mu})$ as the tensor product modules (\ref{form-decomp})?
 \end{question}

  In Section  \ref{sect-bosonic}  we find a direct realization of   $L(d, \bm{\lambda},\bm{\mu})$ as $M_2(1)$--modules in the special cases when  $M_{1}(\bm{\lambda},\bm{\mu})$ has the structure of a module for the vertex algebra $\Pi(0)$ (cf. Proposition \ref{identification-1}). Then the Casimir element $I$ can be seen as an element of the vertex algebra $\Pi(0)$.  
  
  But in general $L(d, \bm{\lambda},\bm{\mu})$ does not have form (\ref{form-decomp}). The reason is that   $L(d, \bm{\lambda},\bm{\mu})$ contains vectors which are not locally finite for the action the element $J^0(k)$, for $k>0$,  which is a necessary condition for a $\mathcal W_{1+\infty,c=-1}$--module to have the form (\ref{form-decomp}).
  Therefore modules $L(d, \bm{\lambda},\bm{\mu})$ are irreducible modules of a completely new type.  
  
  We can slightly generalize these examples by applying the spectral-flow authomorphism $\rho_s$ of the Weyl algebras, which induces the spectral flow automorphism $\widetilde \rho_s$ of the Lie algebra $\widehat {\mathfrak gl}$.

  Let us summerize:
  \begin{theorem}  \label{main-intr}
  The vertex  algebra   $\mathcal W_{1+\infty,c=-1}$ has the following two families of irreducible modules of Whittaker types:
  \begin{itemize}
 \item[(1)] Tensor product modules $Z_1 \otimes Z_2$, where $Z_1$ is a Whittaker modules for the singlet algebra $\mathcal M(2)$ (cf. \cite{T2}) and $Z_2$ is a Whittaker module for the Heisenberg vertex algebra $M_1(1)$.
  \item[(2)] Modules $ \widetilde \rho_s (L(d, \bm{\lambda},\bm{\mu})) $  which are irreducible quotients of the  Weyl Whittaker modules $\rho_s(M_1(\bm{\lambda}, \bm{\mu}))$.
  \end{itemize}
 
  Moreover, the following statements are equivalent (cf. Propositions  \ref{identification-1},   \ref{non-isom}):
  \begin{itemize}
 \item[(a)]  Module in (2) is of type (1);
 \item[(b)] $M_1(\bm{\lambda}, \bm{\mu})$ has the structure of a $\Pi(0)$--module;
 \item[(c)] $\bm{\lambda} = (\lambda_0, 0, \cdots )$, $\lambda_0 \ne 0$.
 \end{itemize}
 \end{theorem}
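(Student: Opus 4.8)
The plan is to read Theorem~\ref{main-intr} as a synthesis of the module-theoretic results developed in the body of the paper, and to organize its proof around three tasks: producing the tensor-product family~(1) together with its irreducibility, producing the Weyl/spectral-flow family~(2) together with its irreducibility, and deciding exactly when a module of type~(2) already occurs in family~(1). For family~(2) I would invoke the analysis of $M_1(\bm{\lambda},\bm{\mu})$ as a $\widehat{\mathfrak gl}$-module: the Casimir $I=\sum_i E_{i,i}$ acts nontrivially, the space of Whittaker vectors is exactly $\C[I]\wh$, and consequently each quotient $L(d,\bm{\lambda},\bm{\mu})$ is simple as a $\widehat{\mathfrak gl}$-module. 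By Theorem~\ref{structure-equivalence}, simplicity as a $\widehat{\mathfrak gl}$-module is equivalent to simplicity as a $\mathcal W_{1+\infty,c=-1}$-module, so the $L(d,\bm{\lambda},\bm{\mu})$ are irreducible vertex-algebra modules. Applying the spectral-flow automorphism $\widetilde\rho_s$, which is an automorphism of $\widehat{\mathfrak gl}$ and hence of $\mathcal W_{1+\infty,c=-1}$, transports simple modules to simple modules, giving family~(2).

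For family~(1) I would start from the isomorphism $\mathcal W_{1+\infty,c=-1}\cong \mathcal M(2)\otimes M_1(1)$. Given an irreducible Whittaker $\mathcal M(2)$-module $Z_1$ and an irreducible Whittaker $M_1(1)$-module $Z_2$, the outer tensor product $Z_1\otimes Z_2$ is a module for the tensor-product vertex algebra, and its irreducibility follows from the irreducibility of the two factors. Tanabe's theorem from~\cite{T2}, which states that every Whittaker $\mathcal M(p)$-module arises from a Whittaker module for the underlying Heisenberg algebra, guarantees that the supply of admissible $Z_1$ is exactly the expected one, so that family~(1) is completely accounted for by free-field realization inside $M_2(1)$.

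The heart of the theorem is the equivalence (a)$\Leftrightarrow$(b)$\Leftrightarrow$(c). I would prove (c)$\Rightarrow$(b) by direct inspection: when $\bm{\lambda}=(\lambda_0,0,\dots)$ with $\lambda_0\ne 0$, the Weyl-algebra action on $M_1(\bm{\lambda},\bm{\mu})$ can be rewritten so that it factors through the vertex algebra $\Pi(0)$, which is the content of Proposition~\ref{identification-1}. For (b)$\Rightarrow$(a), once $M_1(\bm{\lambda},\bm{\mu})$ carries a $\Pi(0)$-module structure the Casimir $I$ is realized as an element of $\Pi(0)$, and the decomposition of $\Pi(0)$-modules under $\mathcal M(2)\otimes M_1(1)$ exhibits $L(d,\bm{\lambda},\bm{\mu})$, together with its spectral-flow twists, in the tensor form~(\ref{form-decomp}).

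The genuinely hard direction is showing that (a) fails whenever (c) fails: as soon as $\bm{\lambda}$ has a nonzero entry beyond $\lambda_0$, the module $L(d,\bm{\lambda},\bm{\mu})$ cannot be written as any tensor product $Z_1\otimes Z_2$. The strategy, carried out in Proposition~\ref{non-isom}, is to isolate a module invariant that every genuine tensor product of the form~(\ref{form-decomp}) must possess but that $L(d,\bm{\lambda},\bm{\mu})$ violates. The natural candidate is local finiteness of the action of the Heisenberg modes $J^0(k)$, $k>0$: in a tensor product $Z_1\otimes Z_2$ the field $J^0$ comes from the Heisenberg factor $M_1(1)$ and acts locally finitely on its Whittaker module, whereas an explicit computation of the $J^0(k)$-action on the Whittaker generator of $M_1(\bm{\lambda},\bm{\mu})$ shows that several nonzero components of $\bm{\lambda}$ produce vectors on which no power of $J^0(k)$ vanishes. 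Pinning down this invariant and verifying the failure of local finiteness is the main obstacle, since it requires translating the $\beta\gamma$-level Whittaker data into the Heisenberg current $J^0$ of $\mathcal W_{1+\infty,c=-1}$ and controlling the resulting infinite-dimensional orbit; this is what forces modules of type~(2) outside~(c) to be of a genuinely new, non-tensor kind and completes the equivalence.
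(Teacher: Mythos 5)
Your proposal is correct and follows essentially the same route as the paper: family (2) via the Whittaker-vector description $\C[I]\wh$ and Theorem~\ref{structure-equivalence} plus spectral flow, family (1) via the $\mathcal M(2)\otimes M_1(1)$ decomposition and Tanabe's theorem, and the cycle (c)$\Rightarrow$(b)$\Rightarrow$(a)$\Rightarrow$(c) with the failure of local finiteness of $J^0(k_0)$ on $[\wh]$ (shown to generate an infinite-dimensional span in the quotient) as the obstruction in Proposition~\ref{non-isom}. This matches how the paper assembles the theorem from Theorems~\ref{irred-description} and \ref{structure-equivalence}, Corollary~\ref{primjena-gl-aut}, and Propositions~\ref{identification-1} and \ref{non-isom}.
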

 
 \subsection*{Typical $\mathcal W_{1+\infty, c}$--modules}
 Weak modules for the vertex operator algebra $\mathcal W_{1+\infty, c}$ can be naturally divided in two classes: typical and atypical. We say that a weak $\mathcal W_{1+\infty, c}$--module is typical if it is irreducible as a module for the Heisenberg-Virasoro vertex subalgebra of $\mathcal W_{1+\infty, c}$, which we denote by $\mathcal L^{HVir}_c$.
    In the case $c=-1$, one can construct typical/atypical highest weight $\mathcal W_{1+\infty, c}$--modules by using typical/atypical modules for the singlet vertex algebra (cf. \cite{A-singlet, AdM-JMP, Wa, CM04}).
      
 Tensor product modules in Theorem \ref{main-intr}(1) are typical since Whittaker $\mathcal M(2)$--modules are realized on irreducible, Whittaker Virasoro modules (cf. \cite{T2}). In  Theorem \ref{app} in  Appendix we prove typicality of our new series of modules:

\begin{theorem}
 Modules  $L(d, \bm{\lambda},\bm{\mu})$)  are typical $\mathcal W_{1+\infty, c=-1}$--modules.
 \end{theorem}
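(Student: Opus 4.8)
The plan is to deduce typicality from the irreducibility of $L(d,\bm{\lambda},\bm{\mu})$, restricted to $\mathcal{L}^{HVir}_{c=-1}$, as a Whittaker module for the (twisted) Heisenberg--Virasoro Lie algebra $\mathcal{H}$ spanned by the modes $L(n)$, $J^0(n)$ and the central elements. The key structural observation is that $I=\sum_i E_{i,i}=J^0(0)$: the Casimir defining the quotient is the Heisenberg zero mode, which is central in $\mathcal{H}$ since $[L(m),J^0(0)]=[J^0(m),J^0(0)]=0$. Hence forming $L(d,\bm{\lambda},\bm{\mu})$ fixes the central character $J^0(0)=d$, precisely the normalization required for an irreducible Whittaker $\mathcal{H}$--module.

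First I would write $J^0$ and the conformal vector $L$ in terms of the $\beta\gamma$ fields and compute $J^0(n)\wh$, $L(n)\wh$. Letting $N_{\bm\lambda}$, $N_{\bm\mu}$ be the top indices with $\lambda_{N_{\bm\lambda}}\neq0$, $\mu_{N_{\bm\mu}}\neq0$, and $N_0=\max(N_{\bm\lambda},N_{\bm\mu})$, I expect that for $n>N_0$ both modes act diagonally, $J^0(n)\wh=\tau_n\wh$ and $L(n)\wh=\sigma_n\wh$, vanishing once $n>N_{\bm\lambda}+N_{\bm\mu}$, while the top scalars $\tau_{N_{\bm\lambda}+N_{\bm\mu}}$, $\sigma_{N_{\bm\lambda}+N_{\bm\mu}}$ are proportional to $\lambda_{N_{\bm\lambda}}\mu_{N_{\bm\mu}}\neq0$. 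Thus $\wh$ is a Whittaker vector for the deep positive subalgebra $\mathcal{H}_{>N_0}=\mathrm{span}\{L(n),J^0(n):n>N_0\}$ with a \emph{nonsingular} Whittaker function, since its leading component does not vanish. (For $1\le n\le N_0$ the modes produce creation terms, so $\wh$ is a generalized, not a classical, Whittaker vector.)

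With this nonsingular datum and the fixed central character, I would run the Whittaker descent argument adapted to $\mathcal{H}_{>N_0}$. Grading $L(d,\bm{\lambda},\bm{\mu})$ by total mode degree, for any nonzero submodule $\mathcal{N}$ and nonzero $v\in\mathcal{N}$ I would apply the leading operators $L(N_{\bm\lambda}+N_{\bm\mu})$ and $J^0(N_{\bm\lambda}+N_{\bm\mu})$, which strictly lower the degree and act invertibly on the top graded piece by nonsingularity, to descend to a nonzero multiple of $\wh$; hence $\wh\in\mathcal{N}$.

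The main obstacle is to upgrade $\wh\in\mathcal{N}$ to $\mathcal{N}=L(d,\bm{\lambda},\bm{\mu})$, i.e. to show that $\wh$ generates the whole module over the much smaller $\mathcal{H}$ rather than over $\mathcal{W}_{1+\infty,c=-1}$; since the higher currents $J^{[r]}$, $r\ge2$, are genuine extra generators of $\mathcal{W}_{1+\infty,c=-1}$, this does not follow formally from the known $\mathcal{W}_{1+\infty,c=-1}$--irreducibility. I would resolve it by a PBW/character comparison, showing that the monomials in the negative modes $L(-n)$, $J^0(-n)$ applied to $\wh$ are linearly independent with graded dimension $\prod_{n\ge1}(1-q^n)^{-2}$, which matches that of $L(d,\bm{\lambda},\bm{\mu})$ after removing the $\C[I]$--line, so that $U(\mathcal{H})\wh$ already exhausts the module. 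In the cases where $M_1(\bm{\lambda},\bm{\mu})$ carries a $\Pi(0)$--structure (Proposition \ref{identification-1}) this can be read off the rank--two bosonic realization, and the general case should follow by transport along the spectral--flow isomorphism $\widetilde\rho_s$. Combined with the descent, this shows $L(d,\bm{\lambda},\bm{\mu})$ has no proper $\mathcal{L}^{HVir}_{c=-1}$--submodule, i.e. it is typical.
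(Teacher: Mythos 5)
There is a genuine gap in the second half of your argument, precisely at the step you yourself flag as ``the main obstacle.'' Your proposed resolution --- a PBW/character comparison showing that monomials in the \emph{negative} modes $L(-n)$, $J^0(-n)$ applied to $\wh$ already exhaust $L(d,\bm{\lambda},\bm{\mu})$, with graded dimension $\prod_{n\ge1}(1-q^n)^{-2}$ --- is false whenever $\bm{\lambda}=(\lambda_0,\dots,\lambda_n)$ with $n>0$. As a vector space $L(d,\bm{\lambda},\bm{\mu})$ is a polynomial algebra on the generators $a(-n)$, $n\ge 1$, and $a^*(-m)$, $m\ge 0$, $m\ne i_0$ (see the proof of Proposition \ref{whit-opis}), and the modes $J^0(1),\dots,J^0(n)$ and $L(0),\dots,L(n-1)$ do \emph{not} act as scalars on $\wh$: they are free creation operators. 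Indeed, Proposition \ref{non-isom} shows that already ${\rm span}_{\C}\{J^0(k_0)^i[\wh]\}$ is infinite-dimensional --- this is exactly why $L(d,\bm{\lambda},\bm{\mu})$ fails to be a tensor product module --- and none of these vectors lie in $U(\mathcal H^-)\wh$. The correct statement, which the paper establishes, is that $M_1(\bm{\lambda},\bm{\mu})$ is the module induced from the ``shifted'' nilpotent subalgebra $\mathcal P^{(n)}={\rm span}\{J^0(r+1),J^1(r),C_1,C_2 : r\ge n\}$, so the PBW basis involves all modes $J^0(r)$, $r\le n$, and $J^1(r)$, $r\le n-1$, not just the negative ones; your character undercounts. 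Your fallback, ``transport the general case along $\widetilde\rho_s$ from the $\Pi(0)$ case,'' also fails: spectral flow changes the Whittaker pair (replacing $\mathfrak n$ by $\mathfrak n^{(s)}$) but never converts a general $\bm{\lambda}$ into one of the form $(\lambda_0,0,\dots)$, which is precisely the dividing line of Theorem \ref{main-intr}.

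Two further remarks. First, your descent step (any nonzero submodule contains $\wh$) is plausible but is exactly where the hard work lives once the module is recognized as $\mbox{Ind}_{\mathcal P^{(n)}}^{\mathcal H}\C\wh$ rather than a ``negative-modes-only'' module; the paper outsources this to the classification of simple restricted Heisenberg--Virasoro modules in \cite{TYZ}. Second, your route bypasses a step the paper cannot avoid: one must first know that $\wh$ is cyclic for $\mathcal L^{HVir}_{c=-1}$ \emph{given} that it is cyclic for $\mathcal M(2)\otimes M_1(1)$, and the paper does this by showing $\mathcal M(2).\wh=\langle L\rangle\wh$ via the singlet relation $\tfrac{3}{4}H(-6){\bf 1}-L(-2)H(-4){\bf 1}+\tfrac{3}{2}L(-3)H=0$ and Tanabe's argument, eliminating the weight-$3$ generator $H$. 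If you want a direct proof inside $\mathcal H$, you must replace your character count by an honest identification of $L(d,\bm{\lambda},\bm{\mu})$ with the simple quotient of $\mbox{Ind}_{\mathcal P^{(n)}}^{\mathcal H}\C\wh$ by $U(\mathcal H)(J^0(0)-d)\wh$.
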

 
  
 \subsection*{A connection with Whittaker ${\mathfrak gl}(2 \ell)$--modules.} The study   of Whittaker modules $M_{1}(\bm{\lambda},\bm{\mu})$  as $\widehat{\mathfrak gl}$--modules is  motivated by \cite{ALPY} and the orbifold theory of   vertex operator algebras. As a result we get a nice family of Whittaker modules for the Lie algebra $\widehat{\mathfrak gl}$ and describe a complete set of Whittaker vectors. Since  one can embed finite-dimensional Lie algebra ${\mathfrak gl}(n)$ into $\widehat{\mathfrak gl}$, it is a natural question what is ${\mathfrak gl}(n)$--version of the Whittaker modules $M_{1}(\bm{\lambda},\bm{\mu})$. It turns out that the theoretical background for these modules was given  in \cite{BM} in the context of generalized Whittaker modules. But the paper \cite{BM} only presented a construction of universal generalized ${\mathfrak gl}(n)$--modules, and it seems the corresponding simple quotients haven't been investigated before our paper.
 Because of simplicity we consider here the ${\mathfrak gl}(2 \ell)$--modules $W(\bm{\alpha},\bm{\beta})$, where $\bm{\alpha},\bm{\beta} \in {\C} ^{\ell}$. which are quotients of universal generalized Whittaker modules from \cite{BM}. We present this construction in Section \ref{sect-gl2n}. We describe the structure of $W(\bm{\alpha},\bm{\beta})$ and obtained a complete family of Whittaker vectors.
We prove that if $\bm{\alpha},\bm{\beta} \ne 0$, then 
$$L(d,\bm{\alpha},\bm{\beta}) = \frac{   W(\bm{\alpha},\bm{\beta}) }  {\langle  (I-d) {\C}[I] {\whf} \rangle  }$$
is a simple generalized Whittaker ${\mathfrak gl}(2 \ell)$--module.

\vskip 5mm
{\bf Acknowledgements.}
We would like to thank  C. H. Lam, N. Yu,   A. Romanov and K. Zhao on discussion about Whittaker modules.

The authors  are  partially supported   by the
QuantiXLie Centre of Excellence, a project cofinanced
by the Croatian Government and European Union
through the European Regional Development Fund - the
Competitiveness and Cohesion Operational Programme
(KK.01.1.1.01.0004).

\section{Preliminaries}

 \subsection{Classical Whittaker modules }
 
 For the structural theory of Whittaker modules for Whittaker pairs see \cite{BM}.
 
Assume that $\g$ is a Lie algebra with triangular decomposition
$\g = \mathfrak n_- \oplus \mathfrak h \oplus \mathfrak n_+$.

Let $W$ be a $\g$--module. Vector $v \in W$ is called a \emph{Whittaker vector} if there is a Lie functional $\lambda : \mathfrak n_+ \rightarrow {\C}$ such that
$ x v = \lambda(x) v$ for all $x \in {\mathfrak n}_+$.

Let $\lambda : \mathfrak n_+ \rightarrow {\C}$ be a Lie functional. Let ${\C} v_{\lambda}$ be a $1$--dimensional  $\mathfrak n_+$--module such that
 $ x v_{\lambda} = \lambda(x) v_{\lambda}$ for all $x \in {\mathfrak n}_+$. Then
 $$ M_{\lambda} = U(\mathfrak g) \otimes _{ U(\mathfrak n_+)} {\C} v_{\lambda},$$
 is a $U(\g)$--module, which is called \emph{the universal  (or standard) Whittaker module}. 
 
 \subsection{Generalized Whittaker modules}
 
Let $\g$ be a Lie algebra, and $\mathfrak n$ any nilpotent subalgebra of $\g$. Whittaker modules with respect to  $\mathfrak n$ are sometimes called \emph{generalized Whittaker modules}, or Whittaker modules with respect to the pair $(\mathfrak g, \mathfrak n)$.  For example, we can take $\mathfrak n$ to be any commutative subalgebra of $ \mathfrak n_+$.

This makes this situation more general to the previous one where the nilpotent subalgebra is exactly $\mathfrak n_+$.

Let $W$ be a $\g$--module. Vector $v \in W$ is called \emph{Whittaker vector for the pair  $(\mathfrak g, \mathfrak n)$} if there is a Lie functional $\lambda : \mathfrak n\rightarrow {\C}$ such that
$ x v = \lambda(x) v$ for all $x \in {\mathfrak n}$.

Let $\lambda : \mathfrak n \rightarrow {\C}$ be a Lie functional. Let ${\C} v_{\lambda}$ be a $1$--dimensional  $\mathfrak n$--module such that
 $ x v_{\lambda} = \lambda(x) v_{\lambda}$ for all $x \in {\mathfrak n}$. Then
 $$ M_{\lambda} = U(\mathfrak g) \otimes _{ U(\mathfrak n)} {\C} v_{\lambda},$$
 is a $U(\g)$--module, which is called \emph{the universal  Whittaker module for the pair  $(\mathfrak g, \mathfrak n)$}.
 
 Such Whittaker modules and corresponding Whittaker vectors are sometimes called the generalized Whittaker modules/Whittaker vectors.

\subsection{Example: $\g = \mathfrak{gl}(2\ell , {\C})$}
\label{ex-gl2n}
In this Subsection we take $\g = \mathfrak{gl}(2\ell , {\C})$.
Take the usual basis $e_{i,j}$, $i,j = 1, \dots, n$ of  $\g$.


Recall the  triangular decomposition:
  $\g = \mathfrak n_- \oplus \mathfrak h \oplus \mathfrak n_+. $
  Here 
  \bea \mathfrak n_{+}  &=& \mbox{span}_{\C} \{ e_{i,j} \ \vert \ i <j \}, \nonumber \\
  \mathfrak n_{-}  &=& \mbox{span}_{\C} \{ e_{i,j} \ \vert \ i >j \}, \nonumber \\
  \mathfrak h   &=& \mbox{span}_{\C} \{ e_{i,i} \ \vert \ i=1, \dots, 2\ell \}. \nonumber \eea

Note that $\mathfrak n_+$ is not commutative. For example $[e_{1,2}, e_{2,3}] = e_{1,3}$.

Whittaker modules with respect to this triangular decomposition are called classical Whittaker modules.

But we can take the following subalgebra $\mathfrak n \subset \mathfrak n_+$.
$$ \mathfrak n = \mbox{span}_{\C} \{ e_{i, j + \ell} \ \vert  i, j = 1, \dots, \ell \}. $$
Then $\mathfrak n$ is a commutative Lie algebra. 

Whittaker modules with respect to the pair $(\g, \mathfrak n)$ are called the generalized Whittaker modules. Let us describe these modules. Take $\lambda \in \mathfrak n ^{*}$. Consider the universal Whittaker module $$ M_{\lambda} = U(\mathfrak g) \otimes _{ U(\mathfrak n)} {\C} v_{\lambda}.$$
Now we have two natural questions:
\begin{itemize}
\item Is  $ M_{\lambda}$ irreducible?
\item If $ M_{\lambda}$  is  reducible, describe   simple quotients of  $ M_{\lambda}$.
\end{itemize}
These modules appeared  in \cite[Example 23]{BM}. Quite surprisingly, the irreducibility analysis  has  not  been presented neither in \cite{BM} or at any other recent work. We shall see later  that $ M_{\lambda}$ is never irreducible, and describe its  simple quotients.

\subsection{Whittaker modules for $\g=\widehat{\mathfrak gl}$} 
\label{gl-pair}
The basis  is given by (cf. \cite{KR0, KR}): $$ C, \ E_{i,j}, \quad i,j \in {\Z}. $$

The  triangular decomposition:
  $\g = \mathfrak n_- \oplus \mathfrak h \oplus \mathfrak n_+. $
  Here 
  \bea \mathfrak n_{+}  &=& \mbox{span}_{\C} \{ E_{i,j} \ \vert \ i <j \}, \nonumber \\
  \mathfrak n_{-}  &=& \mbox{span}_{\C} \{ E_{i,j} \ \vert \ i >j \}, \nonumber \\
  \mathfrak h   &=& {\C} C \oplus \mbox{span}_{\C} \{ E_{i,i} \ \vert \ i \in {\Z} \}. \nonumber \eea

 Let $$ \mathfrak p = \mbox{span}_{\C}    \{ E_{-i-1, j} \ \vert i, j \in {\Z}_{\ge 0} \}. $$
 Then $\mathfrak p$ is a commutative subalgebra of $\mathfrak n_{+}$.
 
 As before we 
  call the  Whittaker modules for the Whittaker pair $(\g, \mathfrak n_+)$ the classical Whittaker $\widehat{\mathfrak gl}$--modules, and Whittaker modules for the pair $(\g, \mathfrak p)$ the generalized Whittaker $\widehat{\mathfrak gl}$--modules. 
 
 In what follows we shall describe a family of simple, generalized  Whittaker $\widehat{\mathfrak gl}$--modules.

   \section{Weyl vertex algebra and its Whittaker modules}
\label{Weyl}

We now recall the notion of Weyl vertex algebra as this will be our main object of study. To define this vertex algebra, first we remind the reader of the notion of Weyl algebra $\widehat{\mathcal{A}}$

Let $\mathcal L$ be the infinite-dimensional Lie algebra with generators
$$K, a(n), a^*(n),  \quad n \in \Z,$$
  such that $K$ is in the center where and  the only notrivial relations  are  
\[[a(n), a^{*} (m)] = \delta_{n+m,0} K , \quad\ n,m\in \mathbb{Z}. \]
The Weyl algebra $\widehat{\mathcal A}$ is:
$$\widehat{\mathcal A} = \frac{U(\mathcal L)}{ \langle K -1 \rangle}, $$
where $\langle K -1 \rangle$ is the two sided ideal generated by $K-1$. So,   in $\widehat{\mathcal{A}}$ we have $K=1$.

To construct the Weyl vertex algebra, we need a vector space, so we choose the simple Weyl algebra module $M$ generated by a cyclic vector $\bf 1$ such that 
\[ a(n) {\bf 1} = a  ^* (n+1) {\bf 1} = 0 \quad (n \ge 0), \]
that is, $M = {\C}[a(-n), a^*(-m) \ \vert \ n > 0, \ m \ge 0 ]$.

 Now, by the generating fields theorem, there is a unique vertex algebra $(M, Y, {\bf 1})$   where
the  vertex operator is given by $$ Y: M \rightarrow \mbox{End}(M) [[z, z ^{-1}]] $$
such that
$$ Y (a(-1) {\bf 1}, z) = a(z), \quad Y(a^* (0) {\bf 1}, z) = a ^* (z),$$
$$ a(z)   = \sum_{n \in {\Z} } a(n) z^{-n-1}, \ \ a^{*}(z) =  \sum_{n \in {\Z} } a^{*}(n)
z^{-n}. $$

We choose the following conformal vector of central charge $c=-1$ (cf. \cite{KR}): $$ \omega = \frac{1}{2} (a(-1) a^{*}(-1) - a(-2) a^{*} (0)) {\bf 1}.$$
Then $(M, Y, {\bf 1}, \omega)$  has the structure of a $\frac{1}{2} {\Z}_{\ge 0}$--graded vertex operator algebra. Weak and ordinary modules for  $(M, Y, {\bf 1}, \omega)$ can be defined as in the case of ${\Z}$--graded vertex operator algebras.

Following \cite{ALPY}, 
we define the Whittaker module for $\widehat{\mathcal{A}}$ to be the quotient  \[M_1(\bm{\lambda}, \bm{\mu}) = \sfrac{\widehat{\mathcal{A}}}{I},\]
 where $\bm{\lambda} = (\lambda_0, \dotsc, \lambda_n)$, $\bm{\mu} = (\mu_1, \dotsc, \mu_m)$ and $I$ is the left ideal \[I = \big\langle a(0)-\lambda_0, \dotsc, a(n) - \lambda_n, a^{*}(1)-\mu_1, \dotsc, a^{*}(m) - \mu_m, a(n+1), \dotsc, a^{*}(n+1), \dotsc \big\rangle.\]

 Let $\mathfrak n$ be the subalgebra of $\mathcal L$ generated by $a(n), a^*(n+1)$, $n \in {\Z}_{\ge 0}$.  
 Then $\mathfrak n$ is commutative, and therefore nilpotent subalgebra of $\mathcal L$.   
\begin{proposition} \cite{ALPY} We have:
\item[(1)]  $M_1(  \bm{ \lambda}, \bm{ \mu}) $ is a standard Whittaker module for the Whittaker pair $(\mathcal L, \mathfrak n)$  of level $K=1$ with the  Whittaker function $\Lambda = \bm{\lambda}, \bm{\mu} : \mathfrak n \rightarrow {\C}$:
\bea 
&&\Lambda(a(i)) = \lambda_i \quad (i=0, \dots, n), \ \Lambda(a(i)) = 0 \quad (i>n)\nonumber \\
&& \Lambda(a^*(i)) = \mu_i \quad (i=1, \dots, m), \ \Lambda(a^*(i)) = 0 \quad (i>m).
\nonumber \eea
\item[(2)] $M_1(  \bm{ \lambda}, \bm{ \mu}) $  is an irreducible $\widehat{\mathcal A}$--module.
\item[(3)] $M_1(  \bm{ \lambda}, \bm{ \mu}) $ is an irreducible weak module for the Weyl vertex operator algebra $M$.
\end{proposition}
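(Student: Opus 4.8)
The plan is to establish the three claims essentially by transporting the Whittaker-module structure between the Weyl algebra $\widehat{\mathcal A}$, the Lie algebra $\mathcal L$, and the vertex algebra $M$, using the standard presentation $M_1(\bm\lambda,\bm\mu)=\widehat{\mathcal A}/I$. First I would verify that $\mathfrak n$ is indeed a commutative (hence abelian, hence nilpotent) subalgebra of $\mathcal L$: the generators are $a(i)$ for $i\ge 0$ and $a^*(i)$ for $i\ge 1$, and the only nontrivial bracket $[a(n),a^*(m)]=\delta_{n+m,0}K$ requires $n+m=0$, which is impossible when $n\ge 0$ and $m\ge 1$ (so $n+m\ge 1$). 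Thus all brackets among the generators vanish and $\mathfrak n$ is commutative. This also shows that the assignment $\Lambda$ given in part~(1) is a well-defined Lie functional, since any Lie homomorphism $\mathfrak n\to\C$ automatically kills $[\mathfrak n,\mathfrak n]=0$.

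For part~(1), the task is to identify $M_1(\bm\lambda,\bm\mu)$ with the universal Whittaker module $U(\mathcal L)\otimes_{U(\mathfrak n)}\C v_\Lambda$ at level $K=1$. I would read off the defining ideal $I$ and observe that the relations $a(i)-\lambda_i$ ($0\le i\le n$), $a(i)$ ($i>n$), $a^*(i)-\mu_i$ ($1\le i\le m$), $a^*(i)$ ($i>m$) are exactly the relations expressing that the image of $\bf 1$ is annihilated by $x-\Lambda(x)$ for every generator $x$ of $\mathfrak n$. Hence the cyclic vector $\bf 1$ is a Whittaker vector with functional $\Lambda$, and the universal property of $U(\mathcal L)\otimes_{U(\mathfrak n)}\C v_\Lambda$ gives a surjection onto $M_1(\bm\lambda,\bm\mu)$; a PBW/monomial-basis count (the module has basis the monomials in $a(-k)$, $k>0$, and $a^*(-k)$, $k\ge 0$, after imposing $K=1$) shows this surjection is an isomorphism.

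Parts~(2) and~(3) are where the real content lies. For irreducibility as an $\widehat{\mathcal A}$--module, the approach I would take is the standard one for Whittaker/Weyl modules: take any nonzero submodule $N$, pick $0\ne w\in N$, and use the lowering operators $a^*(-k)$ ($k\ge 0$) and $a(-k)$ ($k>0$) together with the raising operators $a(i),a^*(i)$ (which act as the scalars $\lambda_i,\mu_i$ or as lowering operators on monomials) to reduce $w$ to a nonzero multiple of the cyclic vector $\bf 1$, thereby forcing $N=M_1(\bm\lambda,\bm\mu)$. Concretely I would induct on a suitable filtration degree and peel off the highest creation operators one at a time using the canonical commutation relations. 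For part~(3) I would invoke the correspondence between weak $M$--modules and restricted $\widehat{\mathcal A}$--modules of level $1$ (the generating-fields/reconstruction yoga already set up in the excerpt): an $\widehat{\mathcal A}$--submodule is the same as an $M$--submodule because the modes of the generating fields $a(z),a^*(z)$ are precisely the operators $a(n),a^*(n)$, so irreducibility as $\widehat{\mathcal A}$--module is equivalent to irreducibility as weak $M$--module. The main obstacle is the reduction step in~(2): because the Whittaker functional is nonzero on finitely many but not all modes, one must be careful that applying a raising operator to a monomial either scales it or strictly lowers the filtration degree, never raising it, so that the termination of the reduction is guaranteed; getting this monotonicity and the base case precisely right is the crux. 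Since this is stated as a cited result from \cite{ALPY}, I would present the above as the reconstruction of that argument rather than rederive every commutator.
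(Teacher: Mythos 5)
The paper does not prove this proposition at all --- it is imported verbatim from \cite{ALPY} --- so there is no in-paper argument to compare against. Your sketch (commutativity of $\mathfrak n$ from $n+m\ge 1$ in the bracket $[a(n),a^*(m)]=\delta_{n+m,0}K$, identification of $\widehat{\mathcal A}/I$ with the level-one induced module via a PBW basis count, reduction of any nonzero vector to the cyclic vector by the operators $x-\Lambda(x)$ for $x\in\mathfrak n$, which act as derivations in the creation operators and hence strictly lower polynomial degree, and the equivalence between weak $M$--modules and restricted level-one $\widehat{\mathcal A}$--modules) is the standard and correct route and is consistent with the argument given in \cite{ALPY}.
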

  
  The Whittaker vector will be denoted by ${\bf w}_{ \bm{ \lambda}, \bm{ \mu} }$.

\da{  For each $s \in {\Z}$, the Weyl algebra $\widehat{\mathcal{A}}$ contains the following automorphism $\rho_s$:
  $$ \rho_s (a(n)) = a(n+s),  \quad \rho_s (a^*(n)) = a^*(n-s), \quad (n \in {\Z}).$$
  
  For any $\widehat{\mathcal{A}}$--module $N$, let $\rho_s(N)$ denote the $\widehat{\mathcal{A}}$--module $\rho_s(N)$ such that
  $ \rho_s(N) = N $ as a vector space and 
  $$ x . v = \rho_s(x) v, \quad x \in \mathcal A, \ v \in N. $$
  Clearly, $N$ is irreducible $\widehat{\mathcal{A}}$--module if and only if $\rho_s(N)$ is irreducible  $\widehat{\mathcal{A}}$--module.
  
  Let $\mathfrak n^{(s)}$ denote the commutative subalgebra of $\mathcal L$ generated by $a(n-s), a^*(n+1+s)$, $n \in {\Z}_{\ge 0}$.

  \begin{proposition}    
  For each $s \in {\Z}$ we have:
\item[(1)]  $\rho_s(M_1(  \bm{ \lambda}, \bm{ \mu}) )$ is a standard Whittaker module for the Whittaker pair $(\mathcal L, \mathfrak n^{(s)})$  of level $K=1$ with the  Whittaker function $\Lambda^{(s)} :  \mathfrak n \rightarrow {\C}$:
\bea 
&&\Lambda^{(s)} (a(i-s)) = \lambda_i \quad (i=0, \dots, n), \ \Lambda^{(s)} (a(i-s)) = 0 \quad (i>n)\nonumber \\
&& \Lambda^{(s)} (a^*(i+s)) = \mu_i \quad (i=1, \dots, m), \ \Lambda^{(s)} (a^*(i+s)) = 0 \quad (i>m).
\nonumber \eea
\item[(2)] $\rho_s(M_1(  \bm{ \lambda}, \bm{ \mu})) $  is an irreducible $\widehat{\mathcal A}$--module.
\item[(3)] $\rho_s(M_1(  \bm{ \lambda}, \bm{ \mu})) $ is an irreducible weak module for the Weyl vertex operator algebra $M$.
  
  \end{proposition}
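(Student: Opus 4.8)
The plan is to deduce all three statements from the corresponding assertions of the preceding proposition on $M_1(\bm{\lambda},\bm{\mu})$ by transporting them through the automorphism $\rho_s$. The starting observation is that $\rho_s$ really is an automorphism of $\widehat{\mathcal A}$: it fixes the central element, so it preserves the level $K=1$, and it preserves the only nontrivial relation, since $[\rho_s(a(n)),\rho_s(a^*(m))] = [a(n+s),a^*(m-s)] = \delta_{(n+s)+(m-s),0} = \delta_{n+m,0}$. Because $\rho_s$ is a bijection of $\widehat{\mathcal A}$, twisting by it is an autoequivalence of the category of $\widehat{\mathcal A}$--modules, and it carries all the module-theoretic data of $M_1(\bm{\lambda},\bm{\mu})$ to $\rho_s(M_1(\bm{\lambda},\bm{\mu}))$ in a controlled way.

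For (1), the main point is the identification of the twisted Whittaker pair. Since the twisted action is $y\cdot v = \rho_s(y)v$, a subalgebra acts by scalars on the Whittaker vector $\wh$ in $\rho_s(M_1(\bm{\lambda},\bm{\mu}))$ precisely when its image under $\rho_s$ does so in $M_1(\bm{\lambda},\bm{\mu})$. Hence I would verify the index bookkeeping $\rho_s(\mathfrak n^{(s)}) = \mathfrak n$, i.e.\ that $\mathfrak n^{(s)} = \rho_s^{-1}(\mathfrak n)$ is exactly the commutative subalgebra generated by $a(n-s)$ and $a^*(n+1+s)$, $n\ge 0$; concretely $\rho_s$ sends $a(n-s)\mapsto a(n)$ and $a^*(n+1+s)\mapsto a^*(n+1)$, the generators of $\mathfrak n$. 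Consequently, for $y\in\mathfrak n^{(s)}$ one has $y\cdot \wh = \Lambda(\rho_s(y))\wh$, so the resulting functional is $\Lambda^{(s)} = \Lambda\circ\rho_s|_{\mathfrak n^{(s)}}$; evaluating gives $\Lambda^{(s)}(a(i-s)) = \Lambda(a(i)) = \lambda_i$ and $\Lambda^{(s)}(a^*(i+s)) = \Lambda(a^*(i)) = \mu_i$ with the stated truncations, which is exactly the claimed formula. Finally, since $M_1(\bm{\lambda},\bm{\mu})$ is the standard Whittaker module $U(\mathcal L)\otimes_{U(\mathfrak n)}\C v_\Lambda$, applying the automorphism identifies $\rho_s(M_1(\bm{\lambda},\bm{\mu}))$ with $U(\mathcal L)\otimes_{U(\mathfrak n^{(s)})}\C v_{\Lambda^{(s)}}$, so it is the standard Whittaker module for $(\mathcal L,\mathfrak n^{(s)})$ of level $1$.

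Statements (2) and (3) are then formal. For (2), a subspace of $\rho_s(M_1(\bm{\lambda},\bm{\mu}))$ is invariant under the twisted action iff it is invariant under the original action (because $\rho_s$ is a bijection on $\widehat{\mathcal A}$), so the submodule lattice is unchanged and irreducibility is inherited from part (2) of the preceding proposition; this is exactly the remark that $N$ is irreducible iff $\rho_s(N)$ is. For (3), I would first note that $\rho_s$ preserves restrictedness: for fixed $v$, $a(n)\cdot v = a(n+s)v$ and $a^*(n)\cdot v = a^*(n-s)v$ vanish for $n\gg 0$ since $M_1(\bm{\lambda},\bm{\mu})$ is restricted. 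Thus $\rho_s(M_1(\bm{\lambda},\bm{\mu}))$ is a restricted level-$1$ $\widehat{\mathcal A}$--module and therefore a weak $M$--module; since the generating fields $a(z),a^*(z)$ have modes exactly $a(n),a^*(n)$, the weak $M$--submodules coincide with the $\widehat{\mathcal A}$--submodules, and irreducibility from (2) yields irreducibility as a weak $M$--module.

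The only genuinely delicate point is the index/shift bookkeeping in (1): the twisted action is defined using $\rho_s$, whereas the Whittaker pair and the functional transform via $\rho_s^{-1}$, so one must track the direction of the shift and the index ranges carefully (which half-line of modes acts by the functional, and where the truncation thresholds land). Once $\rho_s(\mathfrak n^{(s)}) = \mathfrak n$ is confirmed, everything else is an immediate transport of structure and requires no further computation.
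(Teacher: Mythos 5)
Your proposal is correct and follows exactly the route the paper intends: the paper states this proposition without proof, justifying it only by the preceding remark that twisting by the automorphism $\rho_s$ preserves the submodule lattice, and your transport-of-structure argument (including the key verification $\rho_s(\mathfrak n^{(s)})=\mathfrak n$ and $\Lambda^{(s)}=\Lambda\circ\rho_s$) is the standard way to fill in that omitted proof. The index bookkeeping you carried out is accurate, so nothing further is needed.
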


  }
Let $\zeta_p = e^{2 \pi i / p}$ be $p$-th root of unity. Let $g_p$ be the automorphism of  the vertex operator  algebra $M$ which is uniquely determined by the following automorphism  of the Weyl algebra  $\widehat{\mathcal{A}}$:
$$ a(n) \mapsto \zeta_p a(n), \quad a^* (n) \mapsto \zeta^{-1} _p a^* (n) \quad (n \in {\Z}). $$
Then $g_p$ is the automorphism of $M$ of order $p$.

The following result was proved in \cite{ALPY} in the case $s=0$ and the proof for arbitrary $s \in {\Z}$ is completely analogous.

\begin{theorem} \label{weyl-irreducible}  \cite{ALPY} Assume that $\Lambda=(\bm{\lambda},\bm{\mu})\ne0$ and $s \in {\Z}$.
Then
 $\rho_s(M_{1}(\bm{\lambda},\bm{\mu}))$ is an irreducible weak module for
the orbifold subalgebra $M^{\Z_{p}}=M^{\left\langle g_{p}\right\rangle}$, for each $p\ge1$.
\end{theorem}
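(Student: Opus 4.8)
The plan is to reduce the general case $s\in\Z$ to the case $s=0$, which is the cited result of \cite{ALPY}, by exploiting the compatibility of the spectral-flow automorphism $\rho_s$ with the orbifold automorphism $g_p$. First I would record the two automorphisms of the Weyl algebra $\widehat{\mathcal A}$ at the level of modes: the grading automorphism $\hat g_p$ with $\hat g_p(a(n))=\zeta_p a(n)$, $\hat g_p(a^*(n))=\zeta_p^{-1}a^*(n)$ (which induces $g_p$ on $M$), and the spectral flow $\rho_s$ with $\rho_s(a(n))=a(n+s)$, $\rho_s(a^*(n))=a^*(n-s)$. A direct check on generators shows $\hat g_p\rho_s=\rho_s\hat g_p$, since each automorphism preserves the ``species'' (an $a$ stays an $a$), and only $\hat g_p$ sees the $\zeta_p$-scaling while only $\rho_s$ sees the index shift. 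Let $\mathcal U_p$ denote the subalgebra of $\widehat{\mathcal A}$ generated by the modes of the states of $M^{\Z_p}$; this is precisely the $\hat g_p$-invariant subalgebra $(\widehat{\mathcal A})^{\hat g_p}$, namely the span of normally ordered products of $a$'s and $a^*$'s whose charge (number of $a$'s minus number of $a^*$'s) is divisible by $p$. Because $\rho_s$ commutes with $\hat g_p$ and preserves charge, it restricts to an automorphism of $\mathcal U_p$.

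The key consequence is a bijection of submodule lattices. A subspace $W\subseteq \rho_s(M_{1}(\bm{\lambda},\bm{\mu}))$ is stable under $M^{\Z_p}$ if and only if $\rho_s(x)W\subseteq W$ for all $x\in\mathcal U_p$; since $\rho_s(\mathcal U_p)=\mathcal U_p$, this says exactly that $W$ is a $\mathcal U_p$-submodule of $M_{1}(\bm{\lambda},\bm{\mu})$ for the original action. Hence $\rho_s(M_{1}(\bm{\lambda},\bm{\mu}))$ is irreducible over $M^{\Z_p}$ if and only if $M_{1}(\bm{\lambda},\bm{\mu})$ is, and the latter is the $s=0$ statement of \cite{ALPY}. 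This is the precise sense in which the argument is ``completely analogous'': the shift $s$ is absorbed by an automorphism of the invariant algebra.

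For completeness I would also indicate the direct argument that re-proves the $s=0$ case with shifted indices, following \cite{ALPY}. Let $0\ne W$ be an $M^{\Z_p}$-submodule of $\rho_s(M_{1}(\bm{\lambda},\bm{\mu}))$. Only charge-$0\bmod p$ operators are available, so single creation modes such as $a(-k)$ cannot be applied directly; the mechanism is instead to apply the charge-preserving quadratic invariants $:a(i)a^*(j):$ and the charge-$\pm p$ invariants to the cyclic Whittaker vector $\wh$. Because the Whittaker function $\Lambda=(\bm{\lambda},\bm{\mu})\ne 0$, one of the positive modes acts on $\wh$ by a nonzero scalar, so such a quadratic (respectively charge-$p$) invariant reduces on $\wh$ to a nonzero multiple of a single (respectively a length-$(p-1)$) creation operator. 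Iterating this, one shows first that $\wh\in W$, and then that the $M^{\Z_p}$-orbit of $\wh$ spans the whole module.

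The main obstacle is exactly this last reconstruction: since $M^{\Z_p}$ can change the charge only in steps of $p$, one must verify that every monomial basis vector $a(-k_1)\cdots a^*(-l_r)\wh$ is reachable using only invariant operators, controlling the lower-order terms produced by the commutators and checking that the relevant scalars coming from $\Lambda$ do not all vanish. The hypothesis $\Lambda\ne0$ is what makes this bootstrap possible, and the spectral-flow reduction of the first two paragraphs shows that the combinatorics is identical for every $s\in\Z$.
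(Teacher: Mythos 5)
Your proposal is correct in substance, but it takes a genuinely different route from the paper. The paper does not reprove this statement at all: it cites \cite{ALPY} for $s=0$ and asserts that the general case is ``completely analogous,'' meaning one is expected to rerun the direct irreducibility argument of \cite{ALPY} (the bootstrap with charge-preserving and charge-$\pm p$ invariants applied to $\wh$) with all mode indices shifted by $s$ — this is the argument you only sketch in your last two paragraphs. Your main contribution, the reduction of arbitrary $s$ to $s=0$ via the commutation $\hat g_p\rho_s=\rho_s\hat g_p$ and the resulting identification of submodule lattices, is a cleaner and more structural route; it is essentially the content of the paper's own Proposition \ref{spectral-flow-rest} and the remark following it (which the paper states only afterwards, and uses for the structure theory over $M^0$ rather than to prove this theorem). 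What your approach buys is that the ``identical combinatorics'' claim never has to be checked; what the paper's intended approach buys is independence from any statement about how $\rho_s$ interacts with the vertex-algebra module structure.

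One step of your reduction is glossed over and deserves care: the action on $\rho_s(N)$ of a mode of a state $v\in M^{\Z_p}$ is \emph{not} literally $\rho_s$ applied to that mode. The normal ordering built into the vertex operator map (the splitting of fields into creation and annihilation parts) is intrinsic and does not shift along with $\rho_s$, so the twisted mode equals $\rho_s(x)$ only up to finitely many lower-order corrections; for instance $J^0(0)$ acts on $\rho_s(N)$ as $J^0(0)+s\,\mbox{Id}$, not as $\rho_s(J^0(0))$ naively re-normally-ordered. The precise statement is $Y_s(v,z)=Y(\Delta(-sJ^0,z)v,z)$ with Li's $\Delta$-operator, and since $\Delta(-sJ^0,z)$ is invertible and preserves the charge decomposition (hence $M^{\Z_p}$), the operator algebras generated on the underlying vector space by the two actions do coincide, so your lattice bijection and hence your conclusion survive. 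But as written, ``$\rho_s(\mathcal U_p)=\mathcal U_p$, therefore the actions agree'' skips exactly the point that Proposition \ref{spectral-flow-rest} is there to settle, and your identification of $\mathcal U_p$ with $(\widehat{\mathcal A})^{\hat g_p}$ is also stronger than what is needed or known (the modes live in a completion, and whether they exhaust the charge-$0\bmod p$ part is close to the theorem being proved). Neither issue is fatal, but both should be flagged if this is to stand as a self-contained proof.
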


Let us define an operator $J^0:= :a a^*:$. Then the components of the field $$Y(J^0 , z) = \sum_{n \in {\Z}} J^0 (n) z^{-n-1}$$ satisfies  the commutation relations for the Heisenberg algebra at level $-1$. We have the following  subalgebra  of $M$:
$$M^0 = \mbox{Ker} _{M} J^0 (0). $$

Let $\zeta \in {\C}$, $\vert \zeta \vert = 1$,  which is {\bf not a root of unity}. Let $g$ be the automorphism of  the vertex operator  algebra $M$ uniquely determined by the following automorphism  of the Weyl algebra  $\widehat{\mathcal{A}}$:
$$ a(n) \mapsto \zeta a(n), \quad a^* (n) \mapsto \zeta^{-1}  a^* (n) \quad (n \in {\Z}). $$
Then $g$ is the automorphism of $M$  of infinite order, and the group $G= \langle g \rangle $ is isomorphic to $\Z$.
Clearly, we have
$$ M^0 \cong M^G. $$
    
Note also that
$$M^0 = \bigcap_{p=1}^{\infty}  M^{\Z_{p}},  \quad M \supset M^{\Z_{2}} \supset \cdots  M^{\Z_{p}}    \supset \cdots \supset M^0.  $$

Since  $M_{1}(\bm{\lambda},\bm{\mu})$  is  an irreducible $M^{\Z_{p}}$--module for each $p \ge 1$, it is natural to ask if $M_{1}(\bm{\lambda},\bm{\mu})$ is irreducible also as an $M^0$--module. However, in this paper we prove that $M_{1}(\bm{\lambda},\bm{\mu})$  is a reducible and indecomposable $M^0$--module.


 \da{ Let us finish this section by describing the connections between $\rho_s (M_1(  \bm{ \lambda}, \bm{ \mu}))$ and $M_1(  \bm{ \lambda}, \bm{ \mu})$ as modules for vertex subalgebras invariant for $J^0(0)$.
 As $M$--modules these modules  are constructed by applying the automorphism $\rho_s$ which has a nice  description in terms of generators of $\widehat{\mathcal A}$. But,  the automorphism $\rho_s$ does also have sense as automorphism of vertex $J^0(0)$--invariant subalgebras, which can be explain by using H. Li's $\Delta$--operator.
 
 First we notice that as $M$--modules  (see \cite{AdP-2019} for a proof):
 $$ (\rho_s (M_1(  \bm{ \lambda}, \bm{ \mu})), Y_s (\cdot,  z)) = ( M_1(  \bm{ \lambda}, \bm{ \mu}), Y( \Delta(-s J^0, z) \cdot, z))$$
 where
 $$ \Delta(h, z) = z^{h (0) } \exp \left(\sum_{n =1} ^{\infty} \frac{h(n)}{-n} (-z) ^{-n} \right). $$
 In particular, the action of $J^0(z)$ on $\rho_s (M_1(  \bm{ \lambda}, \bm{ \mu}))$ is given by
 $$J^0 _s(z) =  J^0(z) + \frac{(-z) ^{-1}}{-1}  Y( -sJ^0 (1) J^0, z) = J^0(z) + s z^{-1} \mbox{Id}. $$

 By restricting this realization on vertex subalgebras of $M$ which are $J^0(0)$--invariant we get:

\begin{proposition} \label{spectral-flow-rest}  Let $U$ be one of the subalgebras  $M^{\Z_{p}}$ or $M^0$. As an  $U$--module  $\rho_s(M_1(  \bm{ \lambda}, \bm{ \mu})) $ is obtained from $M_1(  \bm{ \lambda}, \bm{ \mu})$ as follows:
$$ (\rho_s (M_1(  \bm{ \lambda}, \bm{ \mu})), Y_s (v, z)) = ( M_1(  \bm{ \lambda}, \bm{ \mu}), Y( \Delta(-s J^0, z) v, z))$$
where $v \in  U$. 
\end{proposition}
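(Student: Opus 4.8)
The plan is to deduce the statement from the $M$--module identity recorded just above (proved in \cite{AdP-2019}) by simply restricting it to the subalgebra $U$; the only point that genuinely needs checking is that the operator $\Delta(-sJ^0,z)$ can be performed \emph{intrinsically} inside $U$, that is, that it maps $U$ into series with coefficients again in $U$. Once this is established, the restriction is immediate.

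First I would verify that $J^0 \in M^0 \subseteq M^{\Z_p}$, so that $-sJ^0 \in U$ in either case. Since $J^0 = J^0(-1)\mathbf{1}$ and the modes obey the level $-1$ Heisenberg relations $[J^0(m),J^0(n)] = -m\,\delta_{m+n,0}\,\mathrm{Id}$, one computes $J^0(0)J^0 = [J^0(0),J^0(-1)]\mathbf{1} = 0$, so $J^0 \in \mathrm{Ker}_M J^0(0) = M^0$; and because $M^0 = \bigcap_p M^{\Z_p}$ we also have $J^0 \in M^{\Z_p}$ for every $p$.

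The decisive step is then to show that $\Delta(-sJ^0,z)$ preserves $U$. Recall
$$\Delta(-sJ^0,z) = z^{-s J^0(0)} \exp\!\left( \sum_{n=1}^{\infty} \frac{-sJ^0(n)}{-n}(-z)^{-n} \right).$$
For $v \in U$ each mode $J^0(n)$ with $n \geq 1$ keeps $v$ inside $U$, since $J^0 \in U$ and $U$ is a vertex subalgebra of $M$ (hence closed under the action of its own modes); thus the exponential factor sends $U$ into $U((z^{-1}))$. For the prefactor, the hypothesis that $U$ is $J^0(0)$--invariant means $U$ splits as a sum of $J^0(0)$--eigenspaces, with integer eigenvalues (indeed $J^0(0)$ acts as $0$ on $M^0$ and with eigenvalues in $p\Z$ on $M^{\Z_p}$), so $z^{-sJ^0(0)}$ multiplies each homogeneous component of $v$ by an integral power of $z$ and again preserves $U$. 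Consequently, for $v \in U$ the field $Y(\Delta(-sJ^0,z)v,z)$ is assembled purely from vertex operators $Y(u,z)$ with $u \in U$, acting on $M_1(\bm{\lambda},\bm{\mu})$ viewed as a $U$--module by restriction.

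Finally, the $M$--module equality
$$(\rho_s(M_1(\bm{\lambda},\bm{\mu})), Y_s(\cdot,z)) = (M_1(\bm{\lambda},\bm{\mu}), Y(\Delta(-sJ^0,z)\cdot,z))$$
holds for every $v \in M$, in particular for every $v \in U$; restricting both module structures to $U \subseteq M$ gives a $U$--module structure on each side and yields exactly the asserted identity (equivalently, the right-hand side is recognized as H.\ Li's $\Delta$--construction performed intrinsically on the vertex operator algebra $U$ with the weight-one Heisenberg element $-sJ^0 \in U$). The main — and essentially only — obstacle is the bookkeeping of the preceding paragraph, confirming that $\Delta(-sJ^0,z)$ does not leave $U$; this is precisely where the two properties of $U$, namely $J^0 \in U$ (for the positive modes) and $J^0(0)$--invariance together with the integrality of the $J^0(0)$--spectrum (for the prefactor $z^{-sJ^0(0)}$), are both used.
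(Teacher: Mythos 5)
Your proposal is correct and follows the same route as the paper: the paper gives no separate argument for this proposition, simply asserting that it follows by restricting the $M$--module identity from \cite{AdP-2019} to the $J^0(0)$--invariant subalgebras $U$. Your verification that $\Delta(-sJ^0,z)$ preserves $U$ (using $J^0\in U$ for the positive modes and the integrality of the $J^0(0)$--spectrum for the prefactor $z^{-sJ^0(0)}$) is exactly the implicit content of the paper's phrase ``by restricting this realization on vertex subalgebras of $M$ which are $J^0(0)$--invariant.''
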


\begin{remark}In the case $U=M^0$, the previous proposition shows that if we describe the structure of $M_1(  \bm{ \lambda}, \bm{ \mu})$ as a $U$--module (i.e., description of submodules, irreducible quotients) we will automatically describe the structure of $\rho_s(M_1(  \bm{ \lambda}, \bm{ \mu}))$ as $U$--module.
\end{remark}
}.

\section{ $\mathcal W_{1+\infty}$-algebra at central charge $c=-1$ and its Whittaker modules}

A peculiarity of the orbifold   $M^0$  is that it has two additional  important realizations which we plan to explore in our paper:
\begin{itemize}
\item $M^0$ is isomorphic to the vertex algebra  $\mathcal W_{1+\infty}$-algebra at central charge $c=-1$.
    
    \item $M^0$ is isomorphic to the simple module for the Lie algebra $\widehat{\frak gl}$, which is the central extension of the Lie algebra of infinite matrices.
\end{itemize}

\subsection{ The $\mathcal W_{1+\infty}$-algebra approach} 
The universal vertex algebra   $\mathcal W_{1+\infty}^ c$ is generated by the fields 
$$ J^k (z)  =   \sum_{n \in {\Z}} J^k (n) z^{-n-k-1} \quad (k \in {\Z}_{\ge 0}),$$
whose components satisfy the commutation relations for the Lie algebra $\widehat{\mathcal D}$ at central charge $c$,  which is a central extension of the Lie algebra of complex regular differential operators on $\C^*$  (cf. \cite{FKRW},  \cite{KR}). It has a simple quotient, which we denote by $\mathcal W_{1+ \infty, c}$.

\da{  

The components of the fields $J^0(z), J^1(z)$ satisfies the commutation relation for  the  Heisenberg Virasoro Lie algebra $\mathcal H$ generated by $J^0(r), J^1(r),C_1, C_2$ 
 with the commutation relations  $(r,s \in {\Z})$:
\bea  [J^0 (r), J^0(s)] &=& r \delta_{r+s,0} C_2, \nonumber \\ 
  \ [J^1(r), J^0(s)] &=& - s J^1(r+s),    \nonumber \\
  \  [J^1(r), J^1(s)] &=& (r-s) J^1 (r+s) + \frac{r^3-r}{12} \delta_{r+s, 0} C_1,\nonumber
\eea
 with central elements $C_1$ and $C_2$.
On each weak $\mathcal W_{1+ \infty, c}$--module the  actions of the central elements are given by
$ C_1= -2c, C_2 = c$.
Let $\mathcal L^{HVir} _c$ be the Heisenberg-Virasoro vertex subalgebra of $\mathcal W_{1+ \infty, c}$ generated by $J^0(z), J^1(z)$.

 \begin{definition}
  An irreducible  weak $\mathcal W_{1+ \infty, c}$--module $\mathcal M$  is called typical (resp. atypical) if it is irreducible (resp. reducible)  as a $\mathcal L^{HVir} _c$--module. 
 \end{definition}

}

It was proved by V. Kac and A. Radul in \cite{KR} that $M^0 \cong \mathcal W_{1+ \infty, c}$ for $c=-1$. As a consequence, we have that $M^0$   is generated    by the fields
$$ J^k (z) = Y(   a^*  (-k) a , z) = : \left( \partial_z ^k a^*(z) \right) a(z):=   \sum_{n \in {\Z}} J^k (n) z^{-n-k-1}  \quad (k \in {\Z}_{\ge 0}).$$

 \begin{remark} One can show that   modules obtained  in the decomposition of  $M$  as a $M^0$--module are atypical (cf. \cite{Wa}), but the modules constructed from the  irreducible relaxed modules in \cite{AdP-2019} are typical highest weight  $\mathcal W_{1+ \infty, c=-1}$--modules.
\end{remark}
 
Our  Whittaker modules for  the Weyl vertex algebra are automatically Whittaker  weak modules for  $\mathcal W_{1+ \infty, c=-1}$.

\subsection{ Approach using the Lie algebra  $\widehat{\frak gl}$}

Define the generating function
$$ E(z,w) = : a(z) a^*(w): = \sum _{i,j \in {\Z}} E_{i,j} z^{i-1} w^{-j}.$$ 
In other words, the operators $E_{i,j}$ are defined as
\bea E_{i,j}  = :a (-i) a^*(j):  \label{formula-Eij} \eea
These operators endow $M$ with the structure of a $\widehat{\frak gl}$--module at central charge $K=-1$ (see formula (2.7) in \cite{KR} for commutation relations for $\widehat{\frak gl}$).

 We have (cf. \cite{KR}):
$$ [E_{i,j},  a(-m)] = \delta_{j,m} a(-i), \quad  [E_{i,j}, a^* (m)]  = -  \delta_{i,m} a^* (j). $$

 \subsection{Connection between   $\widehat{\frak gl}$--modules and  $\mathcal W_{1+\infty}$--modules}
  \label{con-gl-winf}
In  \cite{FKRW},  \cite{KR} the authors demonstrated how one can construct $\mathcal W_{1+\infty}$--modules from $\widehat{\mathfrak gl}$--modules. Strictly speaking, their  approach was used only for quasi-finite modules, which are necessarily weight modules. Therefore, they formulated their results for a category of modules which does not include Whittaker modules. However, we extend their approach to a broader category which also includes Whittaker modules.

Following \cite{FKRW}, we have a homomorphism of Lie algebras
$ \Phi_0 :   \widehat{\mathcal D} \rightarrow \widehat{\frak gl}$, determined by
 \bea \Phi_0 (  t^m f(D) ) &=& \sum_{j \in {\Z} }  f(-j) E_{j-m, j}, \quad \Phi_0 (C) = K \label{formula-D},\eea
where $f \in {\C}[x]$, $D= t \partial_t$.   This homomorphism enables us to consider  $\widehat{\frak gl}$--modules as $\widehat{\mathcal D}$--modules, and therefore  as modules for the vertex algebra $\mathcal W_{1+ \infty} ^c$. 

It seems that the representation theory of $\widehat{\frak gl}$ is easier for analysis than the representation theory of $\mathcal W_{1+ \infty} ^c$. Nonetheless, a problem is in the fact that $\Phi_0$ is not surjective homomorphism. Hence the restriction of irreducible  $\widehat{\frak gl}$--module to  $\widehat{\mathcal D}$ (and therefore to $\mathcal W_{1+ \infty} ^c$) need  not be irreducible. However, for a family of quasi-finite modules, V. Kac and A. Radul \cite{KR0} showed that the image of $\Phi_0$ is dense in a certain way which we explain latter (cf. \cite[Proposition 4.3]{KR0}). So for quasi-finite modules,  irreducible $\widehat{\frak gl}$--modules are also irreducible $\mathcal W_{1+ \infty} ^c$--modules. Our goal is to show that we can replace quasi-finite modules with certain Whittaker modules, so that irreducible $\widehat{\mathfrak gl}$--modules will become irreducible for  $\mathcal W_{1+ \infty} ^c$.

 \section{  $M_{1}(\bm{\lambda},\bm{\mu})$  as a Whittaker $\widehat{\frak gl}$--module }

  \begin{proposition}  \label{ired-gl}
  Assume that $\bm{\lambda} \ne 0$  and  $\bm{\mu} \ne0$.
Then $M_{1}(\bm{\lambda},\bm{\mu})$ is a Whittaker $\widehat{\frak gl}$--module for the pair $\mathfrak p \subset \widehat{\frak gl}$  at central charge $K=-1$, generated by the Whittaker vector ${\wh}$.
\end{proposition}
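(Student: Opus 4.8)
The plan is to verify the two claims packaged in the statement: that $\wh$ is a Whittaker vector for the commutative pair $(\widehat{\frak gl},\mathfrak p)$ at level $K=-1$, and that $\wh$ generates $M_{1}(\bm\lambda,\bm\mu)$ as a $U(\widehat{\frak gl})$-module. I would dispose of the eigenvector property first, as it is a direct computation through the free-field realization $E_{i,j}=\,:a(-i)a^{*}(j):$. Each generator $E_{p,q}$ of $\mathfrak p$ is a normal-ordered quadratic; after normal-ordering and applying the defining Whittaker relations $a(k)\wh=\lambda_{k}\wh$ (with $\lambda_{k}=0$ for $k>n$) and $a^{*}(k)\wh=\mu_{k}\wh$ (with $\mu_{k}=0$ for $k>m$), the operator $E_{p,q}$ collapses on $\wh$ to the scalar $\lambda_{-p}\mu_{q}$, so that $E_{p,q}\wh=\lambda_{-p}\mu_{q}\,\wh$. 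These scalars assemble into a linear map $\eta:\mathfrak p\to\C$. Since $\mathfrak p$ is abelian — the Kronecker deltas and the central cocycle occurring in the $\widehat{\frak gl}$-brackets all vanish on $\mathfrak p\times\mathfrak p$ — the bracket $[\mathfrak p,\mathfrak p]$ is zero in $\widehat{\frak gl}$, so $\eta$ is automatically a Lie functional; together with $C\wh=-\wh$ this exhibits $\wh$ as a Whittaker vector of level $K=-1$.

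The real content is cyclicity, $U(\widehat{\frak gl})\wh=M_{1}(\bm\lambda,\bm\mu)$. I would use the monomial basis $\{\,\prod_i a(-k_{i})\prod_j a^{*}(-l_{j})\,\wh\}$ of $M_{1}(\bm\lambda,\bm\mu)$, obtained by applying the creation modes $a(-k)$ ($k\ge1$) and $a^{*}(-l)$ ($l\ge0$) to $\wh$, and argue by induction on the total degree $N$ of a monomial that it lies in $V:=U(\widehat{\frak gl})\wh$. The base case $N=0$ is immediate. Both genericity hypotheses enter here. Since $\bm\mu\ne0$ I fix $j_{0}$ with $\mu_{j_{0}}\ne0$; for any degree-$N$ monomial $X\wh\in V$ one computes
$$E_{k,\,j_{0}}\,X\wh=\mu_{j_{0}}\,a(-k)\,X\wh+(\text{terms of degree}\le N),$$
because $a(-k)$ commutes past $X$ while the only contractions arise from moving the annihilation mode $a^{*}(j_{0})$ through the $a(-k_{i})$-factors, which strictly removes a creation mode. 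Since $\bm\lambda\ne0$ I fix $k_{0}$ with $\lambda_{k_{0}}\ne0$ and, symmetrically,
$$E_{-k_{0},\,-l}\,X\wh=\lambda_{k_{0}}\,a^{*}(-l)\,X\wh+(\text{terms of degree}\le N).$$
By the inductive hypothesis the degree-$\le N$ remainders already lie in $V$, and the left-hand sides lie in $V$ because $V$ is a submodule; dividing by the nonzero scalars $\mu_{j_{0}}$ and $\lambda_{k_{0}}$ then puts $a(-k)X\wh$ and $a^{*}(-l)X\wh$ in $V$. This produces every degree-$(N+1)$ monomial and closes the induction.

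The step requiring care — and the main obstacle — is controlling the degrees of the commutator corrections $[E_{k,j_{0}},X]\wh$ and $[E_{-k_{0},-l},X]\wh$: one must confirm they carry strictly fewer creation modes than the wanted term, so that they fall under the inductive hypothesis rather than competing with the degree-$(N+1)$ contribution. This is exactly what the action formulas $[E_{i,j},a(-m)]=\delta_{j,m}a(-i)$ and $[E_{i,j},a^{*}(m)]=-\delta_{i,m}a^{*}(j)$ deliver, since each nonzero contraction annihilates one creation factor. It is worth stressing that neither genericity hypothesis can be dropped: $\bm\mu\ne0$ is precisely what allows the quadratic operators $E_{k,j_{0}}$ to manufacture the $a(-k)$-creations, and $\bm\lambda\ne0$ is what lets $E_{-k_{0},-l}$ manufacture the $a^{*}(-l)$-creations; without both, $\wh$ would fail to generate.
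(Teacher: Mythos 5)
Your proof is correct and follows essentially the same route as the paper: cyclicity is obtained by acting with $E_{n,j_0}$ (using $\mu_{j_0}\neq 0$) and $E_{-i_0,-m}$ (using $\lambda_{i_0}\neq 0$) and absorbing the contraction terms by the inductive hypothesis --- you merely run a single induction on the total number of creation modes where the paper runs a two-stage induction, first on the $a(-n_i)$ and then on the $a^{*}(-m_j)$ factors. One caveat on your eigenvector check (a step the paper's proof omits entirely): for the generators $E_{-i-1,0}=a^{*}(0)a(i+1)$ of $\mathfrak p$ as printed one gets $E_{-i-1,0}\wh=\lambda_{i+1}\,a^{*}(0)\wh$, which is not proportional to $\wh$ unless $\lambda_{i+1}=0$, so your scalar $\lambda_{-p}\mu_{q}$ only makes sense for $q\geq 1$ and the Whittaker-functional claim really concerns the span of $\{E_{-i,j}\,:\,i\geq 0,\ j\geq 1\}$ --- an off-by-one in the paper's definition of $\mathfrak p$ rather than a defect of your argument.
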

\begin{proof}
 
Let us prove that $\wh$ is indeed cyclic. As a vector space, $ M_{1}(\bm{\lambda},\bm{\mu}) \cong M$,
 so its basis elements are of the form

\[ w = a(-n_1)  \cdots a(-n_r) a^* (-m_1) \cdots a^* (-m_s) {\wh} .\]

We will now use double induction on the length of a vector in $ M_{1}(\bm{\lambda},\bm{\mu})$. First inductive hypothesis is the following:

Every element of the form  $ w = a(-n_1)  \cdots a(-n_r){\wh}$ is an element of $ U(\widehat{\gl}). {\wh}$.

Let us first prove the initial case. Let $a(-n)\wh$ be an element of $M_{1}(\bm{\lambda},\bm{\mu})$. Since $\bm{\mu} \neq 0$, there is an index $j_0$ such that $\mu_{j0} \neq 0$, and $a^{*}(j_0)\wh = \mu_j \wh$. Therefore, we have
\[\frac{1}{\mu_{j_0}}E_{i, j0}\wh = a(-n)\wh.\]

Now, let us assume that for any vector of lenght at most $r$, the inductive hypothesis holds. Let us take a vector of the form 
\[v = a(-n_1)a(-n_2)\dotsb a(-n_{r+1})\wh.\]
We have that
\begin{align*}
& \frac{1}{\mu_{j_0}^{r+1}}E_{n_1, j_0}E_{n_2, j_0}\dotsb E_{n_{r+1}, j_0}\wh \\
& = \frac{1}{\mu_{j_0}^{r+1}}:a(-n_1)a^*(j_0):a(-n_2)a^*(j_0):\dotsc:a(-n_{r+1})a^*(j_0):\wh \\
& = \frac{1}{\mu_{j_0}^{r+1}}a(-n_1)a(-n_2)\dotsb a(-n_{r+1}) \underbrace{a^*(j_0)a^*(j_0)\dotsb a^*(j_0)}_{r+1 \text{factors}}\wh \\
& + C_1\frac{1}{\mu_{j_0}^{r+1}}a(-n_1)\dotsb \widehat{a(-n_k)}\dotsb a(-n_{r+1})\underbrace{a^*(j_0)\dotsb a^*(j_0)}_{r \text{factors}}\wh \\
& + \dotsc + C_{r+1}\wh, \\ 
& = \frac{1}{\mu_{j_0}^{r+1}} \mu_{j_0}^{r+1}v + K,
\end{align*}
where $C_i$ are constants that can be zero, $\widehat{a(-n)}$ means that this element is left out, and $K$ is a (possibly zero) sum of vectors of length at most $r$.
Therefore, we have that 
\[v =  \frac{1}{\mu_{j_0}^{r+1}}E_{n_1, j_0}E_{n_2, j_0}\dotsb E_{n_{r+1}, j_0}\wh  - K,\]
and by the induction hypothesis, we have that $v \in U(\widehat{\gl}). {\wh}$.

The second induction hypothesis is that every vector of the form 
\[a^*(-m_1)\dotsb a^*(-m_s)a(-n_1)\dotsb a(-n_r)\wh\]
is in $U(\widehat{\gl}). {\wh}$.

Let us first prove the initial case. Let $v = a^*(-m)a(-n_1)a(-n_2)\dotsb a(-n_r)\wh$, where $r$ is any non-negative integer. Since $\bm{\lambda} \ne 0$, there is an index $\lambda_{i0} \ne 0$. Therefore,
\begin{align*}
& \frac{1}{\lambda_{i_0}}E_{-i_0, -m}a(n_1)\dotsb a(-n_r)\wh \\
& = \frac{1}{\lambda_{i_0}}:a(i_0)a^*(-m):a(n_1)\dotsb a(-n_r)\wh \\
& = v.
\end{align*}

Now, let us assume that for any vector with at most $s$ factors $a^*()$ the inductive hypothesis holds. Let us take a vector of the form 
\[v = a^*(-m_1)\dotsb a^*(-m_{s+1})a(-n_1)a(-n_2)\dotsb a(-n_r)\wh.\]
We have that
\begin{align*}
& \frac{1}{\lambda_{i_0}^{s+1}}E_{-i_0, -m_1}E_{-i_0, -m_2}\dotsb E_{-i_0, -m_{s+1}}a(-n_1)a(-n_2)\dotsb a(-n_r)\wh \\
& = \frac{1}{\lambda_{i_0}^{s+1}}:a(i_0)a^*(-m_1):a(i_0)a^*(-m_2):\dotsc:a(i_0)a^*(-m_{s+1}):a(-n_1)\dotsb a(-n_r)\wh  \\
& = \frac{1}{\lambda_{i_0}^{s+1}}a^*(-m_1)a^*(-m_2)\dotsb a^*(-m_{s+1}) a(-n_1)\dotsb a(-n_r) \underbrace{a(i_0)a(i_0)\dotsb a(i_0)}_{s+1 \text{factors}}\wh \\
& + D_1\frac{1}{\lambda_{i_0}^{s+1}}a^*(-m_1)\dotsb \widehat{a^*(-m_k)}\dotsb a^*(-m_{s+1})a(-n_1)\dotsb a(-n_r)\underbrace{a(i_0)\dotsb a(i_0)}_{s \text{factors}}\wh \\
& + \dotsc + D_{s+1}\wh, \\ 
& = \frac{1}{\lambda_{i_0}^{s+1}}\lambda_{i_0}^{s+1}v + L,
\end{align*}
where $D_i$ are constants that can be zero, $\widehat{a^*(-m)}$ means that this element is left out, and $L$ is a (possibly zero) sum of vectors of length at most $s$.
Therefore, we have that 
\[v =  \frac{1}{\lambda_{i_0}^{s+1}}E_{-i_0, -m_1}E_{-i_0, -m_2}\dotsb E_{-i_0, -m_{s+1}}a(-n_1)a(-n_2)\dotsb a(-n_r)\wh  - L,\]
and by the induction hypothesis, we have that $v \in U(\widehat{\gl}). {\wh}$.

This completes the proof that the Whittaker vector $\wh$ is cyclic.
\end{proof}

Next, using a connection between $\widehat{\frak gl}$--modules and $\mathcal W_{1+ \infty}$--modules we have:

\begin{theorem} \label{structure-equivalence} Assume that $\bm{\lambda} \ne 0$  and  $\bm{\mu} \ne0$.
Then $M_{1}(\bm{\lambda},\bm{\mu})$ is a Whittaker  $ \mathcal W_{1+ \infty, c=-1}$--module generated by the cyclic vector ${\wh}$.  In particular, $M_{1}(\bm{\lambda},\bm{\mu})$  is a cyclic  module for the orbifold vertex algebra $M^0$.
\end{theorem}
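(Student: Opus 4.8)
Our strategy is to transfer the $\widehat{\gl}$--cyclicity established in Proposition~\ref{ired-gl} to the vertex algebra $\mathcal W_{1+\infty,c=-1}$ by means of the Lie algebra homomorphism $\Phi_0\colon \widehat{\mathcal D}\to\widehat{\gl}$ of (\ref{formula-D}). Since $M_{1}(\bm{\lambda},\bm{\mu})$ is a weak $M$--module and $M^0\cong\mathcal W_{1+\infty,c=-1}$, it is automatically a weak $\mathcal W_{1+\infty,c=-1}$--module; its action is governed by the modes of the fields $J^k(z)$, which act on the module through $\Phi_0$ and together span the operators $\Phi_0(t^{d}f(D))=\sum_{j\in\Z}f(-j)\,E_{j-d,\,j}$ for $f\in\C[x]$, $d\in\Z$. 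Because $\wh$ is a Whittaker vector for the pair $(\widehat{\gl},\mathfrak p)$, for any $X\in\widehat{\mathcal D}$ with $\Phi_0(X)\in\mathfrak p$ we have that $X\wh$ is a scalar multiple of $\wh$, so $\wh$ remains a Whittaker vector and the associated functional on $\mathcal W_{1+\infty,c=-1}$ is the pullback along $\Phi_0$. Thus the entire content of the theorem is the cyclicity statement $U(\Phi_0(\widehat{\mathcal D}))\,\wh = M_{1}(\bm{\lambda},\bm{\mu})$; as Proposition~\ref{ired-gl} gives $U(\widehat{\gl})\,\wh=M_{1}(\bm{\lambda},\bm{\mu})$, it suffices to prove $U(\Phi_0(\widehat{\mathcal D}))\,\wh=U(\widehat{\gl})\,\wh$.

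The difficulty is that $\Phi_0$ is not surjective: on a fixed diagonal $i-j=-d$ its image produces only the infinite combinations $\sum_j f(-j)E_{j-d,j}$, never a single matrix unit $E_{i,j}$. Following the idea of \cite[Proposition~4.3]{KR0}, which there relies on quasi-finiteness of weight modules, I would replace that hypothesis by the following finiteness lemma adapted to the Whittaker setting: for every $v\in M_{1}(\bm{\lambda},\bm{\mu})$ and every $d\in\Z$, one has $E_{j-d,j}\,v=0$ for all but finitely many $j\in\Z$. By linearity it is enough to check this on a monomial $v=a(-p_1)\cdots a(-p_r)a^*(-q_1)\cdots a^*(-q_s)\wh$. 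Writing $E_{j-d,j}=\,:\!a(d-j)a^*(j)\!:$ and using $[a(i),a^*(k)]=\delta_{i+k,0}$ together with $a(i)\wh=\lambda_i\wh$ (which vanishes for $i>n$) and $a^*(i)\wh=\mu_i\wh$ (which vanishes for $i>m$), one sees that for $j$ large and positive the annihilator $a^*(j)$ commutes past all factors of $v$ and kills $\wh$, while for $j$ large and negative the annihilator $a(d-j)$ does the same; in both regimes normal ordering places the annihilator adjacent to $v$, so $E_{j-d,j}v=0$ outside a bounded range of $j$.

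Granting the lemma, the density argument runs as follows. Fix $v$ and $d$, and let $S=\{\,j: E_{j-d,j}v\neq 0\,\}$, a finite set. The points $\{-j: j\in S\}$ are distinct, so by Lagrange interpolation there is $f\in\C[x]$ with $f(-j_0)=1$ and $f(-j)=0$ for $j\in S\setminus\{j_0\}$, whence $\Phi_0(t^{d}f(D))\,v=\sum_{j\in S}f(-j)E_{j-d,j}v=E_{j_0-d,\,j_0}v$. Since every matrix unit $E_{i,j}$ lies on the diagonal $d=j-i$, this shows $E_{i,j}v\in\Phi_0(\widehat{\mathcal D})\cdot v$ for all $i,j$ (the case $E_{i,j}v=0$ being trivial). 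A straightforward induction on the length of a word now gives the claim: if $v=E_{i_2,j_2}\cdots E_{i_k,j_k}\wh\in U(\Phi_0(\widehat{\mathcal D}))\wh$, then $E_{i_1,j_1}v\in\Phi_0(\widehat{\mathcal D})\cdot v\subseteq U(\Phi_0(\widehat{\mathcal D}))\wh$, so by PBW every vector of $U(\widehat{\gl})\wh$ lies in $U(\Phi_0(\widehat{\mathcal D}))\wh$. Hence $U(\Phi_0(\widehat{\mathcal D}))\wh=M_{1}(\bm{\lambda},\bm{\mu})$, i.e.\ $\wh$ is a cyclic Whittaker vector for $\mathcal W_{1+\infty,c=-1}$, and in particular for $M^0$. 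The main obstacle is precisely the finiteness lemma, which is the structural input that legitimizes the interpolation step and substitutes, in the Whittaker category, for the quasi-finiteness used in \cite{KR0}.
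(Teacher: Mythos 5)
Your argument is correct, and it follows the paper's overall strategy: reduce to the $\widehat{\frak gl}$--cyclicity of Proposition \ref{ired-gl} via the homomorphism $\Phi_0$, and then show that the image of $\Phi_0$ is ``dense enough'' on the module to recover each matrix unit $E_{i,j}$. Where you diverge is in how that density is established. The paper filters the module by the $L(0)$--grading, notes that $\Phi_0(t^m f(D))$ maps the finite-dimensional piece $\mathcal M(m)$ into $\mathcal M(m+N)$ with $N$ depending only on the Whittaker function, and then imports the continuity argument of \cite[Proposition 4.3]{KR0}: the map $\C[x]\to \mathrm{Hom}(\mathcal M(m),\mathcal M(m+N))$ extends to the algebra $\mathcal O$ of entire functions, and a holomorphic $g$ with $g(\ell)=\delta_{\ell,j}$ isolates $E_{i,j}$. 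You instead prove directly the pointwise finiteness lemma that $E_{j-d,j}v=0$ for all but finitely many $j$ (which is exactly the structural fact underlying the paper's boundedness of the degree shift), and then isolate $E_{j_0-d,j_0}v$ by ordinary Lagrange interpolation on the finite exceptional set, with no topology or holomorphic extension needed. Your version is more elementary and, in my view, cleaner for this particular module, at the cost of being tied to the explicit monomial basis of $M_1(\bm{\lambda},\bm{\mu})$; the paper's formulation stays closer to the general quasi-finiteness framework of \cite{KR0} and so generalizes more readily to other filtered modules with finite-dimensional filtration steps. One small imprecision: your remark that $\Phi_0(X)\in\mathfrak p$ forces $X\wh\in\C\wh$ is not literally meaningful, since elements of $\mathrm{im}\,\Phi_0$ are infinite sums along diagonals and never lie in $\mathfrak p$ itself; but this aside plays no role in the cyclicity argument, which is the actual content of the theorem.
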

\begin{proof}
By Proposition \ref{ired-gl} we  have  that $M_{1}(\bm{\lambda},\bm{\mu})$ is a cyclic  $\widehat{\frak gl}$--module and we want to prove it is a cyclic $\widehat{\mathcal D}$--module.
Take a homomorphism $ \Phi_0 :   \widehat{\mathcal D} \rightarrow \widehat{\frak gl}$.  Although $M_{1}(\bm{\lambda},\bm{\mu})$ is not a quasi-finite weight module, we  will show that one can still apply \cite[Proposition 4.3]{KR0}. We use the following arguments:

\begin{itemize}

\item Let $\mathcal M = M_{1}(\bm{\lambda},\bm{\mu})$.
 Recall that  $\mathcal M \cong M$ as a vector spaces.
 
 \item Take the Virasoro vector  $\omega = \frac{1}{2} ( a(-1) a ^*(-1) - a(-2) a^*(0)) {\bf 1}$ of central charge $c=-1$ and define $L(n) = \omega_{n+1}$. Then $L(0)$ defines a $\tfrac{1}{2} {\Z}_{\ge 0}$ gradation on $M$:
 $$ M = \bigoplus_{m \in \tfrac{1}{2} \Z_{\ge 0}}  M_ m$$
 such that $v \in M_m \iff L(0)v = m v$.

   \item Define $\mathcal M (m) = \bigcup_{ k \le m} M_k$.
   
    \item Since $\dim M_k  < \infty$ for all  $k \in \tfrac{1}{2} \Z_{\ge 0}$, we have that  $ \dim \mathcal M (m)    < \infty$ for all  $m \in \tfrac{1}{2} \Z_{\ge 0}$.

 \item  Then  by applying the homomorphism  $\Phi_0$ on $M_{1}(\bm{\lambda},\bm{\mu})$ we get:
  \bea \Phi_0 (  t^m f(D) ) &=& \sum_{j \in {\Z} }  f(-j) E_{j-m, j}, \quad \Phi_0 (C) = -1 \label{formula-D-primjena} \\ 
  &=& \sum_{j \in {\Z} }  f(-j) :a(m-j) a^* (j) : \nonumber 
  \eea

\item For $v \in \mathcal M(m)$, we have that $\Phi_0 (  t^m f(D) )  v \in \mathcal M(m + N)$, where $N$ is determined by Whittaker function, has the property
$$ \lambda_i = \mu_i = 0 \quad \forall i > N,$$
\item[]  and can be chosen independently on $m$.

\item Therefore  $\Phi_0 (  t^m f(D) )$ defines a homomorphism $$ (*) \quad {\C}[x] \rightarrow \mbox{Hom} (\mathcal M(m), \mathcal M(m+N)).$$
\item Since  $\dim \mathcal M(m) < \infty$ and $\dim  \mathcal M(m+N) < \infty$, the proof of \cite[Proposition 4.3]{KR0} gives that
(*) is a continuous map. Therefore, it can be uniquely  extended to a homomorphism  
$$ (*) \quad \mathcal O \rightarrow \mbox{Hom} (\mathcal M(m), \mathcal M(m+N)), $$
where $\mathcal O$ denotes the algebra of  holomorphic functions on ${\C}$.

\item Since for each $j \in {\Z}$ there is a holomorphic function $g$ such that $g(\ell ) = \delta_{\ell,j}$, $\ell \in {\Z}$, we conclude that each $E_{i,j}$ is in the image of  map (\ref{formula-D-primjena}).

\item Since $\mathcal M$ is a Whittaker $\widehat{\frak gl}$--module generated by  ${\wh}$,   $\mathcal M$ is also generated by  ${\wh}$   as a ${\widehat{\mathcal D}}$--module.

\end{itemize}
The proof follows.
\end{proof}

\section{The structure of the Whittaker module  $M_{1}(\bm{\lambda},\bm{\mu})$ as  a $\widehat{\frak gl}$--module }
 
We have proved  in Proposition \ref{ired-gl} that $ M_{1}(\bm{\lambda},\bm{\mu})$ is a cyclic  $\widehat{\frak gl}$--module. Let us prove that it is not irreducible.
We will use the following Casimir element.

Define
$$ I := \sum_{j \in {\Z} } E_{j,j}. $$

  As far we can see, $I$ is introduced in \cite{GS} for a slightly different category of modules. However, it is well defined on the family of Whittaker $\widehat{\frak gl}$--modules which we considered. 

\begin{lemma}
 \da{On any weak $M$--module $\mathcal M$ we have that
 $ I = J^0(0)$. In particular:}
\begin{itemize}
\item[(1)]  $I \in \mbox{End} (M_{1}(\bm{\lambda},\bm{\mu}))$.
\item[(2)]  The action of $I$ commutes with the action of $\widehat{\frak gl}$ on $M_{1}(\bm{\lambda},\bm{\mu})$.
\end{itemize}
\end{lemma}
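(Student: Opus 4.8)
The plan is to establish the operator identity $I = J^0(0)$ by writing out both sides explicitly in terms of the generators $a(n), a^*(n)$ of $\widehat{\mathcal A}$ and matching them term by term; both ``in particular'' claims then follow almost immediately. The conceptual payoff of the identity is that it converts the manifestly problematic infinite sum defining $I$ into a single, well-behaved mode of a vertex operator.

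First I would extract the zero mode of $J^0(z) = Y(:aa^*:, z) = :a(z)\,a^*(z):$. Splitting each field into creation and annihilation parts according to $a(n){\bf 1} = 0$ $(n \ge 0)$ and $a^*(m){\bf 1} = 0$ $(m \ge 1)$, and reading off the coefficient of $z^{-1}$, one gets
\[ J^0(0) = \sum_{i \le -1} a(i)\, a^*(-i) + \sum_{i \ge 0} a^*(-i)\, a(i). \]
On the other hand, $I = \sum_j E_{j,j}$ with $E_{j,j} = :a(-j)\,a^*(j):$; resolving the normal ordering (for $j \ge 1$ the factor $a(-j)$ is a creation operator, while for $j \le 0$ the factor $a^*(j)$ is a creation operator) and reindexing $i = -j$ produces precisely the same two sums, so $I = J^0(0)$. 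One small point I would verify along the way is that the ordering $:aa^*:$ versus $:a^*a:$ is immaterial here: their difference reduces to the contraction terms $[a(z)_+, a^*(z)_+] + [a^*(z)_-, a(z)_-]$, both of which vanish since the relevant index ranges never satisfy $i+j=0$.

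Claim (1) is then immediate. The defining expression $\sum_j E_{j,j}$ is an infinite sum of operators and a priori carries no meaning on an arbitrary $\widehat{\frak gl}$--module, but its identification with the single mode $J^0(0)$ shows it is well defined on any weak $M$--module. Indeed $J^0 = :aa^*:$ lies in $M$ (in fact in $M^0$, since it has charge zero), so $Y(J^0, z)$ acts as a field on the weak module $M_{1}(\bm{\lambda},\bm{\mu}) \cong M$, each of its modes being a genuine endomorphism by the lower-truncation axiom. This is exactly why the statement is phrased for weak $M$--modules.

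For claim (2) I would compute the commutators of $J^0(0)$ with the generators: a direct calculation using $[a(n), a^*(m)] = \delta_{n+m,0}$ gives $[J^0(0), a(k)] = -a(k)$ and $[J^0(0), a^*(k)] = a^*(k)$, so $J^0(0)$ is the charge operator, assigning opposite charges to $a$ and $a^*$. Since $[J^0(0), \cdot\,]$ is a derivation of the operator algebra, applying it to $E_{i,j} = :a(-i)\,a^*(j):$ the two contributions cancel, $[J^0(0), E_{i,j}] = (-1+1)\,E_{i,j} = 0$; as $C$ is central, $I = J^0(0)$ commutes with all of $\widehat{\frak gl}$. The whole argument is essentially bookkeeping, and the only genuinely conceptual step — the one I would flag as the crux — is the well-definedness in (1): the formal sum $\sum_j E_{j,j}$ acquires meaning precisely because it coincides with a single mode of a vertex operator in $M$, so that the weak-module axioms guarantee it acts by a finite sum on every vector.
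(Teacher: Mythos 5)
Your argument is correct and follows essentially the same route as the paper: identify the formal sum $I=\sum_{j}E_{j,j}$ with the zero mode $J^0(0)$ of the field $Y(:aa^*:,z)$, which makes $I$ a well-defined endomorphism of any weak $M$--module (hence of $M_{1}(\bm{\lambda},\bm{\mu})$), and then check that it commutes with each $E_{i,j}$. The only cosmetic difference is in part (2): the paper cancels the two terms of $\sum_{k}[E_{i,j},E_{k,k}]$ directly from the abstract $\widehat{\mathfrak gl}$ bracket, while you obtain the same cancellation from the charge-operator identities $[J^0(0),a(k)]=-a(k)$ and $[J^0(0),a^*(k)]=a^*(k)$ in the Weyl realization.
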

 \begin{proof}
 
 On each weak $M$--module the operator  $ J^0(n) = \sum_{j \in {\Z}} : a(j+n) a(-j):$ is well defined. In particular, we have:
 $$ J^0(0) = \sum_{j \in {\Z}} : a(-j) a^* (j): = I. $$
 In particular this implies that $I$ is well defined operator acting on   $ M_{1}(\bm{\lambda},\bm{\mu})$. Thus (1) holds.
 
 For the proof of assertion (2) we use commutation relations for  $\widehat{\frak gl}$:
 $$ [ E_{i,j}, E_{s,t} ]  =  \delta_{j,s}  E_{i,t} - \delta_{i,t} E_{s,j} - C  \Phi( E_{i,j}, E_{s,t}). $$
We calculate:
 \bea
  [ E_{i,j}, I]  v&=& \left( \sum_{k \in {\Z} } [ E_{i,j}, E_{k,k} ] \right)  v  \nonumber \\
  &=&  \left(   E_{i, j } -  E_{i , j} \right) v \nonumber  \\
  &=&  0. \nonumber  
 \eea
 
 Therefore, our second assertion also holds.
 
 \end{proof}
 
\begin{lemma} \begin{itemize}
\item[(1)] For every $n \in {\Z}_{\ge 1}$,
$I^n {\wh}$ is an non-trivial Whittaker vector in $M_{1}(\bm{\lambda},\bm{\mu})$ . 
\item[(2)]  For any 
$S \subset   {\C} [I] {\wh} $, let $\langle S \rangle$  be the submodule generated by  Whittaker vectors from $S$.  Then $\langle (I-d)  {\C} [I] {\wh} \rangle $  is a proper submodule of $M_{1}(\bm{\lambda},\bm{\mu})$ for each $d \in {\C}$ .
\end{itemize}
\end{lemma}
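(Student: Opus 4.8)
The plan is to exploit the explicit action of $I = J^0(0)$ on the polynomial realization of $M_{1}(\bm{\lambda},\bm{\mu})$. Recall from the previous lemma that $I = J^0(0)$, and that as a vector space $M_{1}(\bm{\lambda},\bm{\mu}) \cong M = {\C}[a(-j), a^*(-k) \mid j \ge 1,\ k \ge 0]$ is a polynomial algebra (all creation operators mutually commute), hence an integral domain, with $\wh$ corresponding to $1$. I would first rewrite $J^0(0)$ in modes as
\[ J^0(0) = \sum_{j \ge 1} a(-j) a^*(j) + \sum_{k \ge 0} a^*(-k) a(k), \]
and, using the Whittaker relations $a(k)\wh = \lambda_k \wh$, $a^*(j)\wh = \mu_j \wh$ (with only finitely many nonzero), establish the structural fact that, with respect to the length filtration $F_d = \{\deg \le d\}$ (where $F_0 = {\C}\wh$), the operator $I$ decomposes as $I = M_\Omega + R$. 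Here $M_\Omega$ is multiplication by the \emph{nonzero} degree-one element $\Omega = \sum_{j \ge 1} \mu_j\, a(-j) + \sum_{k \ge 0} \lambda_k\, a^*(-k)$ (a genuine finite sum, since the $\lambda_i,\mu_i$ vanish for large indices), while $R$ is degree-preserving, $R(F_d) \subseteq F_d$. The point is that when an annihilation mode in $J^0(0)$ propagates through to $\wh$ it yields an eigenvalue and the paired creation mode raises the length by one (this assembles into $M_\Omega$), whereas a single contraction lowers the length by one before the creation mode restores it (this is $R$). Note $\Omega \ne 0$ because $\bm{\mu} \ne 0$ (or $\bm{\lambda} \ne 0$).

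For part (1), the Whittaker property is immediate: by the preceding lemma $I$ commutes with the entire $\widehat{\gl}$-action, hence with $\mathfrak p$, so from $x\wh = \Lambda(x)\wh$ for $x \in \mathfrak p$ (Proposition \ref{ired-gl}) I obtain $x(I^n\wh) = I^n(x\wh) = \Lambda(x)(I^n\wh)$; thus $I^n\wh$ is a Whittaker vector for the \emph{same} functional $\Lambda$. Non-triviality then follows by induction on $n$ from the filtration decomposition: since $M_\Omega$ raises degree by exactly one and $R$ preserves the filtration, one gets $I^n\wh = \Omega^n + (\text{terms in } F_{n-1})$, and $\Omega^n \ne 0$ because $M$ is an integral domain and $\Omega \ne 0$; hence $I^n\wh \ne 0$.

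For part (2), I would first observe that every element of $S = (I-d){\C}[I]\wh$ is a linear combination of the vectors $I^k\wh$, each a Whittaker vector for the same $\Lambda$, so $S$ consists of Whittaker vectors and $\langle S \rangle$ is simply the submodule $U(\widehat{\gl})\cdot S$ they generate. Since $I$ is central, for any $u \in U(\widehat{\gl})$ and any polynomial $p$ we have $u\,(I-d)p(I)\wh = (I-d)\,p(I)\,u\wh$, whence $\langle S \rangle \subseteq \mathrm{Im}(I-d)$. It therefore suffices to show $\wh \notin \mathrm{Im}(I-d)$, and here the filtration does the work once more: writing $I - d = M_\Omega + (R - d\,\mathrm{Id})$, the operator $R - d\,\mathrm{Id}$ preserves $F_d$ while $M_\Omega$ raises degree by exactly one, so for any nonzero $v$ of degree $d$ the top-degree part of $(I-d)v$ is $\Omega \cdot v_d \ne 0$ of degree $d+1$; thus $(I-d)v$ always has degree $\ge 1$ and can never equal the degree-zero vector $\wh$. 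Hence $\wh \notin \langle S \rangle$, and since $\wh$ generates $M_{1}(\bm{\lambda},\bm{\mu})$ as a $\widehat{\gl}$-module (Proposition \ref{ired-gl}), $\langle S \rangle$ is a proper submodule for every $d \in {\C}$.

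The main obstacle is establishing the decomposition $I = M_\Omega + R$ cleanly, i.e.\ controlling the normal-ordered action of $J^0(0)$ and verifying that the only degree-raising contribution is precisely multiplication by the finite element $\Omega$ (the infinite sums truncate because $a(k)\wh = 0$ for $k > n$ and $a^*(j)\wh = 0$ for $j > m$). Once this structural identity and the integral-domain property of $M$ are in place, both the non-triviality in (1) and the exclusion $\wh \notin \mathrm{Im}(I-d)$ in (2) follow from the same leading-term bookkeeping.
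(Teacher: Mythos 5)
Your proof is correct. The paper actually states this lemma without proof; the only ingredient it supplies is the single computation $I\,{\wh}=\sum_k\lambda_k a^*(-k){\wh}+\sum_k\mu_k a(-k){\wh}$ (carried out in the following section), together with the centrality of $I$ from the preceding lemma. Your decomposition $I=M_\Omega+R$, with $\Omega=\sum_{j\ge1}\mu_j a(-j)+\sum_{k\ge0}\lambda_k a^*(-k)$ and $R$ degree-preserving, is exactly the right way to promote that one computation to a statement about all of ${\C}[I]{\wh}$ and about $\mathrm{Im}(I-d)$: it follows from $a(-j)a^*(j)P{\wh}=\mu_j a(-j)P{\wh}-a(-j)\partial_{a(-j)}P\,{\wh}$ and $a^*(-k)a(k)P{\wh}=\lambda_k a^*(-k)P{\wh}+a^*(-k)\partial_{a^*(-k)}P\,{\wh}$, so $R$ is (up to sign) the charge operator and the only degree-raising part is multiplication by the finite, nonzero element $\Omega$. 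The leading-term argument then gives both $I^n{\wh}=\Omega^n{\wh}+(\text{lower degree})\ne0$ and ${\wh}\notin\mathrm{Im}(I-d)\supseteq\langle(I-d){\C}[I]{\wh}\rangle$, which, combined with cyclicity of ${\wh}$, yields properness; the Whittaker property of $I^n{\wh}$ via $[\,E_{i,j},I\,]=0$ is cleaner than the paper's later mode-by-mode computation $a(n)I^k{\wh}=\lambda_n(I+1)^k{\wh}$, $a^*(n+1)I^k{\wh}=\mu_{n+1}(I-1)^k{\wh}$, which gives the same conclusion. The one hypothesis you should make explicit is $(\bm{\lambda},\bm{\mu})\ne(0,0)$ (in force throughout this part of the paper), since it is what guarantees $\Omega\ne0$.
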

We have proved the following result.

\begin{theorem} 
$ M_{1}(\bm{\lambda},\bm{\mu})$  is a reducible, cyclic  $\widehat{\frak gl}$--module.
\end{theorem}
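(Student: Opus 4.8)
The statement is a synthesis of the results already in place, so the plan is simply to assemble them in the right order. Cyclicity requires no new work: Proposition \ref{ired-gl} shows that $M_{1}(\bm{\lambda},\bm{\mu})$ is generated as a $\widehat{\frak gl}$--module by the single Whittaker vector $\wh$, which is exactly the assertion that the module is cyclic. So the whole task reduces to producing one proper, nonzero $\widehat{\frak gl}$--submodule.

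For reducibility the natural candidate is $\langle (I-d)\,\C[I]\wh\rangle$ for a fixed $d\in\C$, and here I would invoke the Casimir lemma (which gives $I=J^0(0)$ and, crucially, that $I$ commutes with the entire $\widehat{\frak gl}$--action) together with the last lemma (which already asserts this submodule is proper). The one remaining point is that the submodule is nonzero, and I would settle this by a direct computation of $I\wh=J^0(0)\wh$. Writing the Casimir in normally ordered form, $J^0(0)=\sum_{j\ge 1}a(-j)a^*(j)+\sum_{j\le 0}a^*(j)a(-j)$, and feeding in the Whittaker relations $a^*(j)\wh=\mu_j\wh$ and $a(k)\wh=\lambda_k\wh$, one finds
\[ I\wh \;=\; \sum_{j=1}^{m}\mu_j\,a(-j)\wh \;+\; \sum_{k=0}^{n}\lambda_k\,a^*(-k)\wh. \]
Since the modes $a(-j)$ for $j\ge 1$ and $a^*(-k)$ for $k\ge 0$ are creation operators and $\bm{\lambda},\bm{\mu}\ne 0$, the right-hand side is a nonzero vector that is strictly longer than $\wh$ in the PBW filtration; in particular $I\wh\notin\C\wh$, so $(I-d)\wh\ne 0$ for every $d$. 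Hence $\langle (I-d)\,\C[I]\wh\rangle$ is a nonzero proper submodule, and $M_{1}(\bm{\lambda},\bm{\mu})$ is reducible.

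The honest content of the argument does not sit in the theorem itself but in the two lemmas it rests on, which I am entitled to assume. The delicate one is the properness of $\langle (I-d)\,\C[I]\wh\rangle$, and in a from-scratch proof this is exactly where the work would lie: one must identify the full space of Whittaker vectors as $\C[I]\wh$ and verify that $(I-d)\,\C[I]\wh$ is a proper subspace of it (it omits the degree-zero generator $\wh$), so that the submodule cannot contain the cyclic vector. Granting that lemma, the theorem follows immediately from cyclicity plus the nonvanishing computation above, and no further estimates are needed.
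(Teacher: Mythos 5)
Your proposal is correct and follows essentially the same route as the paper: cyclicity is quoted from Proposition \ref{ired-gl}, and reducibility comes from the Casimir $I=J^0(0)$ commuting with the $\widehat{\frak gl}$--action together with the computation $I\wh=\sum_{j=1}^{m}\mu_j a(-j)\wh+\sum_{k=0}^{n}\lambda_k a^*(-k)\wh\notin\C\wh$ and the lemma asserting properness of $\langle (I-d)\,\C[I]\wh\rangle$ (the paper performs the identical normally ordered computation at the start of the next section). You also correctly locate the only nontrivial burden in the properness statement, which the paper likewise delegates to its lemma on Whittaker vectors.
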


 \da{ The automorphism $\rho_s$ of $\widehat{\mathcal A}$ induces the following automorphism of $\widehat{\frak{gl}}$:
 $$\widetilde  \rho_s :  E_{i,j} = : a(-i) a^*(j) \mapsto E_{i-s, j-s} = :a (-i+s) a^{*} (j-s):. $$
 So this means that as $\widehat{\frak{gl}}$--module $\rho_s (M_{1}(\bm{\lambda},\bm{\mu}))$ is obtained from   $M_{1}(\bm{\lambda},\bm{\mu}) $ by applying the automorphism $\widetilde  \rho_s$.
As a consequence we gat
 \begin{corollary} \label{primjena-gl-aut}We have:
 \item[(1)] $\rho_s (M_{1}(\bm{\lambda},\bm{\mu}))$ is a reducible, cyclic  $\widehat{\frak gl}$--module.
 \item[(2)]  Let  $\mathcal L$ be any  irreducible quotient of $M_{1}(\bm{\lambda},\bm{\mu})$. Then $\widetilde  \rho_s (\mathcal L)$ is a irreducible quotient of  $\rho_s (M_{1}(\bm{\lambda},\bm{\mu}))$.
 \end{corollary}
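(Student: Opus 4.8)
The plan is to deduce both assertions formally from the principle that twisting a module by a Lie algebra automorphism is an auto-equivalence of the module category. Recall that for an automorphism $\sigma$ of a Lie algebra $\g$ and a $\g$--module $V$, one forms the twisted module $V^{\sigma}$, equal to $V$ as a vector space but with action $x \cdot_{\sigma} v = \sigma(x)\, v$. The functor $V \mapsto V^{\sigma}$ is exact, sends each submodule $N \subseteq V$ to a submodule $N^{\sigma} \subseteq V^{\sigma}$, and hence induces an isomorphism of submodule lattices; in particular it preserves cyclicity, reducibility and irreducibility, and it commutes with passage to quotients.

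First I would record the compatibility between $\rho_s$ and $\widetilde\rho_s$. Writing $\iota : \widehat{\frak gl} \to \widehat{\mathcal A}$, $E_{i,j} \mapsto\, :a(-i) a^*(j):$, for the embedding defining the $\widehat{\frak gl}$--action, the computation displayed just before the corollary is exactly the intertwining relation $\rho_s \circ \iota = \iota \circ \widetilde\rho_s$. It follows that the $\widehat{\frak gl}$--module $\rho_s(M_{1}(\bm{\lambda},\bm{\mu}))$ is precisely the twist $M_{1}(\bm{\lambda},\bm{\mu})^{\widetilde\rho_s}$: the underlying space is unchanged and $E_{i,j}$ acts via $\widetilde\rho_s(E_{i,j}) = E_{i-s,\,j-s}$.

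Granting this identification, assertion (1) is immediate. The Theorem above shows $M_{1}(\bm{\lambda},\bm{\mu})$ is cyclic and reducible, so the twist $\rho_s(M_{1}(\bm{\lambda},\bm{\mu}))$ inherits both properties. Concretely $\wh$ stays cyclic, while the proper submodule $\langle (I-d)\,{\C}[I]\wh\rangle$ twists to a proper submodule; here one uses that the Casimir is fixed, $\widetilde\rho_s(I) = \sum_{j} E_{j-s,\,j-s} = \sum_{k} E_{k,k} = I$, so the $I$--eigenstructure responsible for reducibility transports unchanged. For assertion (2), write an irreducible quotient as $\mathcal L = M_{1}(\bm{\lambda},\bm{\mu})/N$ with $N$ maximal; since the twist functor preserves maximality and commutes with quotients, $\widetilde\rho_s(\mathcal L) = \rho_s(M_{1}(\bm{\lambda},\bm{\mu}))/N^{\widetilde\rho_s}$ is an irreducible quotient of $\rho_s(M_{1}(\bm{\lambda},\bm{\mu}))$.

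The only step requiring real attention is the identification in the second paragraph, that restricting the Weyl module $\rho_s(M_{1}(\bm{\lambda},\bm{\mu}))$ to $\widehat{\frak gl}$ genuinely yields the twist by $\widetilde\rho_s$; once the relation $\rho_s \circ \iota = \iota \circ \widetilde\rho_s$ is verified on generators this is a tautology, and all the rest is formal. The one technical caveat is that $I$ is an infinite sum of the $E_{j,j}$, but its $\widetilde\rho_s$--invariance is merely the reindexing $k = j-s$, and by the preceding lemma $I = J^0(0)$ is a well-defined operator on weak $M$--modules, so no convergence problem occurs.
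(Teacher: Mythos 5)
Your proposal is correct and follows essentially the same route as the paper: the paper likewise derives the corollary directly from the observation that $\rho_s(M_{1}(\bm{\lambda},\bm{\mu}))$, viewed as a $\widehat{\frak gl}$--module, is the twist of $M_{1}(\bm{\lambda},\bm{\mu})$ by $\widetilde\rho_s$, and then invokes the standard fact that twisting by an automorphism preserves cyclicity, reducibility and irreducible quotients. Your write-up merely makes explicit the intertwining relation and the invariance of the Casimir $I$, which the paper leaves implicit.
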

 
 }

 Let  $\mathcal L$ be any  irreducible quotient of $M_{1}(\bm{\lambda},\bm{\mu})$. (We will see bellow  that it is not unique). Two  very important problems arise:
 
  \begin{itemize}
  \item[(A)] Find an explicit realization of $\mathcal L$   if possible.
  \item[(B)]  Determine the  complete set of Whittaker vectors in $M_{1}(\bm{\lambda},\bm{\mu})$ which  generate the maximal submodule of $M_{1}(\bm{\lambda},\bm{\mu})$. 
 \end{itemize}
   \da{In what follows we will solve both problems.}

 \section{Whittaker vectors in $M_{1}(\bm{\lambda},\bm{\mu})$ }

In this section we describe the complete set of Whittaker vectors in $M_{1}(\bm{\lambda},\bm{\mu})$ and determine its simple quotients. In particular, we completely solve the problem (B).

Let $\mathcal M = M_{1}(\bm{\lambda},\bm{\mu})$.

As before, we choose $N \in {\Z}_{\ge 0}$ such that $a(n) {\wh} = a^*(n) {\wh} = 0$ for $n \ge N$.
Then \bea  v =  I   {\wh} &=& \left( : a(0) a^*(0) :  + \cdots + :a(N) a^*(-N): \right){\wh} \nonumber \\ && +  \left( : a(-1) a^*(1) :    + \cdots + :a(-N) a^*(N): \right){\wh} \nonumber \\ &=& \sum_{k=0} ^N \lambda_k a^*(-k) {\wh} +  \sum_{k=1} ^N \mu_k a(-k) {\wh} \eea
is a new Whittaker vector. Now we choose $i_0$ and $j_0$ such that $$\lambda_{i_0} \cdot \mu_{j_0} \ne 0. $$

\begin{lemma}
For $n, k \ge 0$ we have
$$ a(n)  I ^k {\wh} = {\lambda_n} (I+1) ^k  {\wh}. $$
$$ a^*(n+1)  I ^k {\wh} = {\mu_{n+1}} (I-1) ^k  {\wh}. $$
\end{lemma}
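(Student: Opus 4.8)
The statement is a direct computation: both identities follow once I know how the operator $I = J^0(0)$ commutes with the modes $a(n)$ and $a^*(n)$, after which I simply push $I^k$ past the leading mode and invoke the Whittaker conditions $a(n)\wh = \lambda_n \wh$ (for $n \ge 0$, with $\lambda_n = 0$ beyond the length of $\bm{\lambda}$) and $a^*(n+1)\wh = \mu_{n+1}\wh$. So the plan is to first isolate the relevant commutators and then iterate.

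First I would record how $I$ commutes with the generators of $\widehat{\mathcal{A}}$. Writing $I = J^0(0) = \sum_{j \in \Z} :a(-j) a^*(j):$, as established in the preceding lemma, and using $[a(k), a^*(l)] = \delta_{k+l,0}$, a summand-by-summand computation gives $[I, a(n)] = -a(n)$ and $[I, a^*(n)] = a^*(n)$ for all $n \in \Z$; equivalently, $a(n) I = (I+1)\, a(n)$ and $a^*(n) I = (I-1)\, a^*(n)$. The formally infinite sum defining $I$ causes no trouble here: on any fixed vector of $M_1(\bm{\lambda},\bm{\mu})$ only finitely many summands $:a(-j)a^*(j):$ act nontrivially, by the $\tfrac{1}{2}\Z_{\ge 0}$--grading, so the commutator may be evaluated term by term, exactly as in the lemma asserting $I \in \mbox{End}(M_1(\bm{\lambda},\bm{\mu}))$. (I would derive the signs from the $J^0$--charges of $a$ and $a^*$ rather than from the diagonal sum $\sum_j [E_{j,j},\cdot]$, since the Heisenberg computation makes the signs transparent.)

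With these relations in hand I would prove the two formulas by induction on $k$. The base case $k=0$ is just the Whittaker property $a(n)\wh = \lambda_n\wh = \lambda_n (I+1)^0\wh$, and similarly $a^*(n+1)\wh = \mu_{n+1}\wh$. For the inductive step one pushes a single factor of $I$ across with $a(n) I = (I+1)\, a(n)$:
$$ a(n) I^{k+1}\wh = (I+1)\, a(n)\, I^{k}\wh = (I+1)\,\lambda_n (I+1)^k\wh = \lambda_n (I+1)^{k+1}\wh, $$
where the scalar $\lambda_n$ commutes with $(I+1)^k$. The argument for $a^*(n+1)$ is identical, using $a^*(n+1) I = (I-1)\, a^*(n+1)$, and yields $a^*(n+1) I^k\wh = \mu_{n+1}(I-1)^k\wh$.

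There is no serious obstacle: the only points demanding care are the two signs in $[I,a(n)]$ and $[I,a^*(n)]$ and the remark that the infinite operator $I$ commutes with each mode term by term on $M_1(\bm{\lambda},\bm{\mu})$. The content of the lemma is conceptual rather than technical — it says that $I$ acts on the cyclic vector and its $I$--powers through the shift operators $I \pm 1$, and this is precisely the mechanism that will let me identify $\C[I]\wh$ as the full space of Whittaker vectors and control the submodule structure of $M_1(\bm{\lambda},\bm{\mu})$.
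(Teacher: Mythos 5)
Your proof is correct and follows essentially the same route as the paper: both compute the commutators $[a(n),I]=a(n)$ and $[a^*(n),I]=-a^*(n)$ term by term from $I=\sum_j :a(-j)a^*(j):$ (the paper writes this as $\sum_j[a(n),E_{j,j}]$, which is the same computation) and then induct on $k$ by pushing one factor of $I$ past the mode. The signs and the remark about the well-definedness of the infinite sum are exactly as in the paper, so there is nothing to add.
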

\begin{proof} By direct calculation we have
$$ [a(n), I ] = \sum _{j \in {\Z} } [a(n), E_{j,j}] = [a(n), a^*(-n) a(n)] = a(n).$$
This implies that
$$ a(n) I {\wh} = \lambda_n  I {\wh} + [a(n), I] {\wh} = \lambda_n (I {\wh} + {\wh}) =
\lambda_n ( I +1) {\wh}. $$
$$ a(n) I^2 {\wh}   = {\lambda_n} (I ^2    + 2 I + 1) {\wh} = {\lambda_n} (I+1) ^2 {\wh}
$$
Assume that  $a(n)  I ^k {\wh} \in   {\bf {\C}[I] {\wh}}$. Then
\bea  a(n) I^{k+1} {\wh}  &=& I (a(n) I^k {\wh}) + a(n) I^k {\wh} = {\lambda_n} (I (I+1) ^k + (I+1)^k)  {\wh}. \nonumber \\
&=& (I+1) ^{k+1} {\wh}. \nonumber \eea
Now first  claim holds by the induction. The proof of the second claim is analogous.
\end{proof}

  Consider now some examples
\bea  E_{-i_0, n_1} a(-n_1) I^k {\wh}  &=&  [ E_{-i_0, n_1}, a(-n_1)] ^k  I^k {\wh}      + a(-n_1)  E_{-i_0, n_1}   I^k {\wh} \nonumber \\
&=& - a(i_0)   I^k {\wh}   +  a(-n_1)     I^k E_{-i_0, n_1}  {\wh}  \nonumber \\
&=& -\lambda_{i_0} (I+1) ^k {\wh} + \lambda_{i_0}  \mu_{n_1} a(-n_1) I^k {\wh}  \nonumber \eea
  
 
\bea  E_{m_1, j_0} a^*(-m_1) I^k {\wh}  &=&  [ E_{m_1, j_0}, a^*(-m_1)]   I^k {\wh}      + a^*(-m_1)  E_{m_1, j_0}   I^k {\wh} \nonumber \\
&=& a^* (j_0)   I^k {\wh}   +  a^*(-m_1)     I^k E_{m_1, j_0}  {\wh}  \nonumber \\
&=& \mu_{j_0} (I-1) ^k {\wh} + \lambda_{m_1} \mu_{j_0}   a^*(-m_1) I^k {\wh}  \nonumber \eea
This implies:
$$  \widehat  E_{m_1, j_0}   a^*(-m_1) I^k {\wh}  =  \frac{1}{\mu_{j_0}} 
  ( E_{m_1, j_0 } - \lambda_{m_1} \mu_{j_0} )  a^* (-n_1) I^k {\wh} = ( I-1) ^k {\wh}, $$
  and 
   
In this way we get:

\begin{lemma} \label{derivacije} Let   $p \in {\Z}_{\ge 0}$.
\item[(1)] Let $$\Phi \in {\C}[a(-n-1),  a^*(-m) \ \vert n,m  \in {\Z}_{\ge 0}, m \ne i_0 \}. $$
Then 
  $$\widehat E_{-i_0, p}\  \Phi   I ^k  {\wh} = \left( \frac{\partial }{\partial a(-p) } \Phi\right)
  \ (I+1)^{k} {\wh}. $$
 \item[(2)]  Let $$\Phi^* \in {\C}[ a^*(-m) \ \vert m  \in {\Z}_{\ge 0}, m \ne i_0 \}. $$
Then 
  $$\widehat E_{p, j_0} \  \Phi^*   I ^k  {\wh} = \left( \frac{\partial }{\partial a^*(-p) } \Phi\right)
  \ (I-1)^k {\wh}. $$
\end{lemma}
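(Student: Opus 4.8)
The plan is to prove both identities by induction on the degree of $\Phi$ (resp. $\Phi^*$), meaning the number of creation operators in a monomial; by linearity it suffices to treat monomials. Throughout I would use four facts already available before the statement. First, under the identification $M_{1}(\bm{\lambda},\bm{\mu})\cong M\cong{\C}[a(-n-1),a^*(-m)\mid n,m\ge 0]$ the underlying space is a \emph{commutative} polynomial algebra, so $\partial/\partial a(-p)$ and $\partial/\partial a^*(-p)$ are genuine derivations. Second, the element $I=J^0(0)$ is central, hence commutes with each normalized operator $\widehat E$. Third, the shift relations $a(n)I^k\wh=\lambda_n(I+1)^k\wh$ and $a^*(n+1)I^k\wh=\mu_{n+1}(I-1)^k\wh$ from the preceding Lemma. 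Fourth, the normalized operators are built (as in the displayed computations just before the statement) precisely so that $\widehat E_{-i_0,p}\wh=0$; since moreover $I$ is central, this already gives the base case $\Phi=1$ of (1): then $\partial\Phi/\partial a(-p)=0$ and $\widehat E_{-i_0,p}I^k\wh=I^k\,\widehat E_{-i_0,p}\wh=0$. The single-factor cases $\Phi=a(-p)$ and $\Phi^*=a^*(-m_1)$ are exactly the identities computed immediately above the statement.

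For the setup of (1) I would record the two commutator facts, both immediate from $E_{-i_0,p}=\,:a(i_0)a^*(p):$ and the Weyl relations: $[\widehat E_{-i_0,p},a^*(-m)]=0$ for every $m\ne i_0$, and $[\widehat E_{-i_0,p},a(-n)]=0$ for $n\ne p$ while $[\widehat E_{-i_0,p},a(-p)]$ is a fixed scalar multiple of the annihilation operator $a(i_0)$. The exclusion $m\ne i_0$ in the hypothesis on $\Phi$ is what makes the first fact available for every $a^*$-factor of $\Phi$.

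For the inductive step of (1) write a monomial as $\Phi=X\Psi$ with $X$ a single creation factor and $\deg\Psi<\deg\Phi$. If $X=a^*(-m)$ with $m\ne i_0$, or $X=a(-n)$ with $n\ne p$, then $\widehat E_{-i_0,p}$ commutes past $X$, and applying the inductive hypothesis to $\Psi$ together with the fact that $\partial/\partial a(-p)$ leaves $X$ untouched gives $\widehat E_{-i_0,p}\,X\Psi\,I^k\wh=X\,(\partial\Psi/\partial a(-p))\,(I+1)^k\wh=(\partial\Phi/\partial a(-p))\,(I+1)^k\wh$. The only delicate case is $X=a(-p)$, where
\[
\widehat E_{-i_0,p}\,a(-p)\Psi\,I^k\wh=[\widehat E_{-i_0,p},a(-p)]\,\Psi\,I^k\wh+a(-p)\,\widehat E_{-i_0,p}\,\Psi\,I^k\wh .
\]
In the first summand I propagate $a(i_0)$ to the right through $\Psi$; this is legitimate because $a(i_0)$ commutes with every $a(-n)$ and, since $\Psi$ contains no factor $a^*(-i_0)$, with every $a^*$-factor of $\Psi$ as well, which is the second, equally essential, role of the exclusion $m\ne i_0$. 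When $a(i_0)$ finally meets $I^k\wh$, the shift relation and the choice of normalizing constant collapse the first summand to $\Psi\,(I+1)^k\wh$, while the second summand equals $a(-p)\,(\partial\Psi/\partial a(-p))\,(I+1)^k\wh$ by induction. Adding them and invoking the Leibniz rule $\partial\bigl(a(-p)\Psi\bigr)/\partial a(-p)=\Psi+a(-p)\,\partial\Psi/\partial a(-p)$ yields exactly $(\partial\Phi/\partial a(-p))\,(I+1)^k\wh$.

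Part (2) follows by the symmetric argument, interchanging $a\leftrightarrow a^*$, $\lambda\leftrightarrow\mu$ and $i_0\leftrightarrow j_0$, and replacing the shift $a(n)I^k\wh=\lambda_n(I+1)^k\wh$ by $a^*(n+1)I^k\wh=\mu_{n+1}(I-1)^k\wh$; this is precisely the source of the $(I-1)^k$ in place of $(I+1)^k$. Here the residual annihilation operator produced by the commutator is $a^*(j_0)$, and since $\Phi^*$ is a polynomial only in the $a^*(-m)$, it commutes freely to the right until it reaches $I^k\wh$. I expect the main obstacle to be bookkeeping rather than anything conceptual: one must check that the residual operator ($a(i_0)$, resp. $a^*(j_0)$) genuinely commutes all the way through $\Psi$, since it is only upon reaching $I^k\wh$ that the shift $I^k\mapsto(I\pm1)^k$ is produced and the normalizing constant is cancelled; this is exactly what the support hypotheses guarantee. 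The remaining care needed is to confirm that the factor $(I+1)^k$ (resp. $(I-1)^k$) is common to every term generated in the induction, so that it can be pulled out and the Leibniz combination recognized.
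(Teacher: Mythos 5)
Your proposal is correct and follows exactly the route the paper intends: the paper only records the single-factor computations for $E_{-i_0,n_1}a(-n_1)I^k\wh$ and $E_{m_1,j_0}a^*(-m_1)I^k\wh$ and then asserts the lemma with ``in this way we get,'' and your induction on the number of creation factors, using centrality of $I$, the shift relations $a(n)I^k\wh=\lambda_n(I+1)^k\wh$ and $a^*(n+1)I^k\wh=\mu_{n+1}(I-1)^k\wh$, and the commutator facts guaranteed by the exclusion $m\ne i_0$, is precisely the omitted general step. You also correctly identify the two distinct roles of the hypothesis $m\ne i_0$ (commuting $\widehat E$ past the $a^*$-factors, and propagating the residual $a(i_0)$ through $\Psi$), which is the only point where the assertion could have failed.
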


\begin{proposition}  \label{whit-opis} Assume that $v$ is a Whittaker vector in $\mathcal M$. Then
$ v \in   {\bf {\C}[I] {\wh}}$. 
\end{proposition}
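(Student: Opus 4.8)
The plan is to work in the realization $\mathcal M\cong\C[a(-n),a^*(-m)\mid n\ge1,\ m\ge0]$ with $\wh\leftrightarrow 1$, and to write the relevant operators as differential operators on this polynomial algebra. Since $a(\alpha)\wh=\lambda_\alpha\wh$ and $[a(\alpha),a^*(-\alpha)]=1$, the mode $a(\alpha)$ ($\alpha\ge0$) acts as $\lambda_\alpha+\partial_{a^*(-\alpha)}$ and $a^*(b)$ ($b\ge1$) acts as $\mu_b-\partial_{a(-b)}$; these commute, so for $\alpha\ge0,\ b\ge1$
\[E_{-\alpha,b}={:}a(\alpha)a^*(b){:}=(\lambda_\alpha+\partial_{a^*(-\alpha)})(\mu_b-\partial_{a(-b)}),\]
which acts on $\wh$ by the scalar $\lambda_\alpha\mu_b$. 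In the same way $I=J^0(0)$ becomes $I=P+D$, where $P$ is multiplication by the linear form $p:=\sum_{k\ge0}\lambda_k a^*(-k)+\sum_{k\ge1}\mu_k a(-k)$ and $D$ is the Euler operator (number of $a^*$-factors minus number of $a$-factors); note $I\wh=p$ and $D\wh=0$.

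I would then argue by induction on the total polynomial degree $d$ of $v$, the case $d=0$ being trivial. Two facts drive the induction. First, $I$ commutes with $\widehat{\gl}$, so every $I^k\wh$ is again a Whittaker vector for the same functional, and from $I=P+D$ one reads off $I^d\wh=p^d+(\text{lower degree})$. Hence it is enough to prove that the top homogeneous component $v_d$ of a Whittaker vector equals $c\,p^d$ for some $c\in\C$: then $v-c\,I^d\wh$ is a Whittaker vector of strictly smaller degree, and the inductive hypothesis finishes the argument.

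The heart of the matter --- and the step I expect to be hardest --- is identifying $v_d$. Imposing $E_{-\alpha,b}v=\lambda_\alpha\mu_b v$ and comparing homogeneous components, the top (degree $d$) component is an identity, while the degree-$(d-1)$ component gives the first-order system
\[(\mu_b\,\partial_{a^*(-\alpha)}-\lambda_\alpha\,\partial_{a(-b)})\,v_d=0\qquad(\alpha\ge0,\ b\ge1).\]
Fixing $\alpha_0,b_0$ with $\lambda_{\alpha_0}\mu_{b_0}\ne0$ (available since $\bm{\lambda},\bm{\mu}\ne0$) and eliminating, all first partials of $v_d$ become proportional to the single polynomial $w:=\partial_{a^*(-\alpha_0)}v_d$, namely $\partial_{a^*(-\alpha)}v_d=\tfrac{\lambda_\alpha}{\lambda_{\alpha_0}}w$ and $\partial_{a(-b)}v_d=\tfrac{\mu_b}{\lambda_{\alpha_0}}w$; equivalently $\nabla v_d$ is everywhere proportional to $\nabla p$. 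Since $p$ is a nonzero linear form and $v_d$ involves finitely many variables, a linear change of coordinates turning $p$ into a coordinate shows $v_d$ depends on that coordinate alone, forcing $v_d=c\,p^d$ by homogeneity.

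The delicate point in the previous step is that the system must include the index $\alpha=0$, i.e. the conditions coming from the creation mode $a^*(0)=a^*(-0)$; without them the system is underdetermined exactly when $\bm{\lambda}=(\lambda_0,0,\dots)$, which is the degenerate case singled out in Theorem \ref{main-intr}. Finally, to legitimize the subtraction $v-c\,I^d\wh$ I would first record that any Whittaker vector has the same functional as $\wh$: looking at the lowest homogeneous component and using that each $E_{-\alpha,b}$ equals $\lambda_\alpha\mu_b$ plus strictly degree-lowering terms forces its eigenvalue to be $\lambda_\alpha\mu_b$. Combining these steps yields $v\in\C[I]\wh$.
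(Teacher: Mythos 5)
Your argument is correct, and it reaches the conclusion by a genuinely different route than the paper. The paper trades the single variable $a^*(-i_0)$ for $I$, works with the exact decomposition $\mathcal M\cong{\C}[a(-n-1),a^*(-m)\mid m\ne i_0]\otimes{\C}[I]{\wh}$, and proves an exact identity (Lemma \ref{derivacije}) showing that the shifted operators act as $\partial/\partial a(-p)$ and $\partial/\partial a^*(-p)$ up to the substitution $I\mapsto I\pm1$; annihilation of a Whittaker vector then kills every non-constant coefficient in one stroke, with no induction. You instead stay in the naive polynomial coordinates, use only the degree-$(d-1)$ part of $(E_{-\alpha,b}-\lambda_\alpha\mu_b)v=0$ to force the top homogeneous component to be $c\,p^d$ (where $p$ is the linear form with $I{\wh}=p\,{\wh}$), and induct on degree after subtracting $c\,I^d{\wh}$. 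This buys several things: you avoid the adapted basis and the exact twisted-derivative formula; you make explicit that the conditions at $\alpha=0$, i.e.\ the operators $E_{0,b}={:}a(0)a^*(b){:}$ with $E_{0,b}{\wh}=\lambda_0\mu_b{\wh}$ (which the paper's proof also uses, via $i_0$, even though they are not literally in the span written in Subsection \ref{gl-pair}), are indispensable; and you record that any Whittaker vector automatically carries the same functional as ${\wh}$, a point the paper uses implicitly. Two small corrections: the eigenvalue $\lambda_\alpha\mu_b$ is read off from the \emph{top}, not the lowest, homogeneous component, since $E_{-\alpha,b}-\lambda_\alpha\mu_b$ strictly lowers degree; and you should note that $w=\partial_{a^*(-\alpha_0)}v_d$ is automatically nonzero for $d\ge1$, as otherwise all first partials of $v_d$ would vanish, contradicting $v_d\ne0$.
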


\begin{proof} 

First we see that as a vector space:
$$\mathcal M \cong  {\C}[a(-n-1),  a^*(-m) \ \vert n,m  \in {\Z}_{\ge 0}, m \ne i_0 \} \otimes {\C}[I], $$
i.e., the basis of  $\mathcal M$ consists of the vectors:
$$(*) \ a(-n_1) \cdots a(-n_r) a^*(-m_1) \cdots a^*(-m_s) I ^k  {\wh}, \ n_i \ge 1, m_i \ge 0, m_i \ne i_0,  k \ge 0. $$

 So we can assume that we have a Whittaker vector of the form
\bea v= \sum_{j=0} ^{k} \Phi_j   I^j {\wh} \label{form-whitt} \eea for certain polynomials  $\Phi_j     \in {\C}[a(-n-1),  a^*(-m) \ \vert n,m  \in {\Z}_{\ge 0}, m \ne i_0 \}$. By using Lemma \ref{derivacije} (1) we get
  $$\widehat E_{-i_0, p}\  v =  \sum_{j=0} ^{k} \left( \frac{\partial }{\partial a(-p) } \Phi_j\right)
  \ (I+1) ^j {\wh}. $$
  Since $v$ is a Whittaker vector  $\widehat E_{-i_0, p}$ must act trivially on it. Since vectors (*) are linearly independent we conclude that   
 $$ \frac{\partial }{\partial a(-p) } \Phi_j = 0  \quad \forall p \in {\Z}_{>1}, \ j=0, \dots,k. $$
 This implies that $\Phi_j = \Phi_j ^*$  for certain $\Phi_j ^* \in {\C}[ a^*(m) \ \vert m  \in {\Z}_{\ge 0}, m \ne i_0 \} $ and  
  $$v= \sum_{j=0} ^{k} \Phi^* _j   I ^j {\wh}   $$
   
  Now using Lemma \ref{derivacije} (2)  we get 
  $$   \widehat E_{p, j_0} v =  \sum_{j=0} ^{k} \left( \frac{\partial }{\partial a^* (-p) } \Phi_j^* \right)
  \ (I-1) ^j   {\wh}. $$
 Since $v$ is a Whittaker vector, we have  $\widehat E_{p, j_0} v=0$, implying that
$ \frac{\partial }{\partial a^* (-p) } \Phi_j^* = 0 \quad \forall p \ge 0$. This implies that $\Phi_j ^*$ is a constant, and therefore $ v \in   {\bf {\C}[I] {\wh}}$. The Claim holds.
\end{proof} 

As a consequence, we get description of irreducible Whittaker modules.
\begin{theorem} \label{irred-description}
For each $d \in {\C}$:
$$L(d,\bm{\lambda},\bm{\mu}) = \frac{   M_{1}(\bm{\lambda},\bm{\mu}) }  {\langle  (I-d) {\C}[I] {\wh} \rangle  }$$
is irreducible.
\end{theorem}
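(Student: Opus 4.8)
The plan is to show that $L(d,\bm{\lambda},\bm{\mu})$ has no proper nonzero submodules by analyzing Whittaker vectors in the quotient. The key structural input is Proposition \ref{whit-opis}, which states that every Whittaker vector in $\mathcal M = M_{1}(\bm{\lambda},\bm{\mu})$ lies in ${\C}[I]{\wh}$. Since $I$ acts as the scalar operator $J^0(0)$ commuting with all of $\widehat{\frak gl}$, and since the submodule $\langle (I-d){\C}[I]{\wh}\rangle$ is precisely the submodule generated by all Whittaker vectors on which $I$ acts with value $\neq d$, passing to the quotient kills exactly those Whittaker vectors. First I would verify that in $L(d,\bm{\lambda},\bm{\mu})$ the element $I$ acts as the scalar $d$ on the image of ${\wh}$: indeed, modulo $\langle (I-d){\C}[I]{\wh}\rangle$ we have $(I-d){\wh} \equiv 0$, so $I{\wh} = d{\wh}$, and then by the commutativity of $I$ with $\widehat{\frak gl}$ (part (2) of the Casimir lemma), $I$ acts as $d$ on the entire cyclic module $U(\widehat{\frak gl}).{\wh} = L(d,\bm{\lambda},\bm{\mu})$.

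The core argument would then run as follows. Suppose $N \subseteq L(d,\bm{\lambda},\bm{\mu})$ is a nonzero submodule. Since $L(d,\bm{\lambda},\bm{\mu})$ is a generalized Whittaker module for the commutative nilpotent subalgebra $\mathfrak p$, a standard local-finiteness argument shows that $N$ must contain a Whittaker vector: the action of $\mathfrak p$ on any finite-dimensional graded piece is by commuting operators whose commutators land in lower degree, so one can produce a common eigenvector for $\mathfrak p$ inside $N$. Concretely, I would use the $L(0)$-grading from the proof of Theorem \ref{structure-equivalence} together with the fact that the relevant operators $\widehat E_{-i_0,p}$ and $\widehat E_{p,j_0}$ act as lowering-type operators (this is exactly the content of Lemma \ref{derivacije}, where they act as partial derivatives), so that iterating them on any nonzero vector eventually produces a nonzero Whittaker vector.

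Once $N$ contains a Whittaker vector $v$, Proposition \ref{whit-opis} forces $v \in {\C}[I]{\wh}$. But in the quotient $L(d,\bm{\lambda},\bm{\mu})$ we have $I = d$, so ${\C}[I]{\wh}$ collapses to ${\C}{\wh}$; hence $v$ is a nonzero scalar multiple of the image of ${\wh}$. Therefore ${\wh} \in N$, and since ${\wh}$ is cyclic (Proposition \ref{ired-gl}) we conclude $N = L(d,\bm{\lambda},\bm{\mu})$. This proves irreducibility as a $\widehat{\frak gl}$--module, and by Theorem \ref{structure-equivalence} the same vector generates the module over $\widehat{\mathcal D}$, giving irreducibility as a $\mathcal W_{1+\infty,c=-1}$--module as well.

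The main obstacle I anticipate is the step asserting that every nonzero submodule $N$ contains a Whittaker vector. For classical highest-weight modules one uses a weight decomposition, but here the module is of Whittaker type with a commutative $\mathfrak p$ acting non-semisimply, so I cannot simply extract a weight vector. The careful point is to exploit the explicit lowering action in Lemma \ref{derivacije}: given any nonzero $w \in N$, expand it in the basis $(*)$ from the proof of Proposition \ref{whit-opis} and apply a suitable product of operators $\widehat E_{-i_0,p}$ and $\widehat E_{p,j_0}$ to strip away all the creation operators $a(-n_i)$ and $a^*(-m_j)$, arriving at a nonzero element of ${\C}[I]{\wh}$, which (after imposing $I=d$) is a Whittaker vector proportional to ${\wh}$. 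Making this termination argument precise — choosing the annihilation operators so the process strictly decreases a well-defined degree and does not prematurely vanish — is where the real work lies.
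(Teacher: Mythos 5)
Your proposal is correct and follows essentially the same route as the paper: the paper's proof shows every vector with nonzero image in $L(d,\bm{\lambda},\bm{\mu})$ is cyclic by applying the derivative-type operators $\widehat E_{-i_0,p}$, $\widehat E_{p,j_0}$ of Lemma \ref{derivacije} (i.e.\ ``the same arguments as in the proof of Proposition \ref{whit-opis}'') to strip the vector down to a nonzero multiple of $\wh$, and then invokes cyclicity of $\wh$ from Proposition \ref{ired-gl}. Your reformulation via ``every nonzero submodule contains a Whittaker vector, which by Proposition \ref{whit-opis} must be proportional to $\wh$ once $I=d$'' is the same argument, and the termination issue you flag is exactly what the reduction to Proposition \ref{whit-opis} resolves.
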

\begin{proof}

It suffices  to prove that each vector $v$ of the form  (\ref{form-whitt}), whose projection to  $L(d,\bm{\lambda},\bm{\mu}) $ is non-zero,  is cyclic. By using exactly the same arguments as in the proof of Proposition \ref{whit-opis} we see that $\wh \in U(\widehat{\mathfrak{gl}}). v $. Since $\wh$ is cyclic vector in  $M_{1}(\bm{\lambda},\bm{\mu})$, we conclude that $v$ is cyclic. The proof follows.
\end{proof}

\da{
By Applying Corollary  \ref{primjena-gl-aut} on the irreducible $\widehat{\frak{gl}}$--module $L(d,\bm{\lambda},\bm{\mu})$ we get:
\begin{corollary}
For any $s \in {\Z}$, $\widetilde \rho_s( L(d,\bm{\lambda},\bm{\mu}) )$ is an irreducible quotient of  $\rho_s( M_{1}(\bm{\lambda},\bm{\mu}))$.
\end{corollary}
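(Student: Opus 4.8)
The plan is to obtain the statement as an immediate specialization of Corollary \ref{primjena-gl-aut}(2) to the particular irreducible quotient furnished by Theorem \ref{irred-description}. First I would record that, by Theorem \ref{irred-description}, the module $L(d,\bm{\lambda},\bm{\mu})$ is genuinely an irreducible quotient of the cyclic $\widehat{\frak gl}$--module $M_{1}(\bm{\lambda},\bm{\mu})$, namely the quotient by the proper submodule $\langle (I-d)\,{\C}[I]\,{\wh}\rangle$ generated by the Whittaker vectors classified in Proposition \ref{whit-opis}. In the notation of Corollary \ref{primjena-gl-aut} this is exactly the situation ``$\mathcal L$ is an irreducible quotient of $M_{1}(\bm{\lambda},\bm{\mu})$'' with $\mathcal L = L(d,\bm{\lambda},\bm{\mu})$.

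Second, I would make explicit the twisting mechanism behind Corollary \ref{primjena-gl-aut}. The spectral-flow automorphism $\rho_s$ of $\widehat{\mathcal A}$ induces the Lie algebra automorphism $\widetilde\rho_s\colon E_{i,j}\mapsto E_{i-s,j-s}$ of $\widehat{\frak gl}$, and as a $\widehat{\frak gl}$--module $\rho_s(M_{1}(\bm{\lambda},\bm{\mu}))$ is by definition the twist of $M_{1}(\bm{\lambda},\bm{\mu})$ by $\widetilde\rho_s$. The key elementary fact is that twisting by a Lie algebra automorphism is an auto-equivalence of the module category: since $\widetilde\rho_s$ is bijective, a subspace $W$ satisfies $\widetilde\rho_s(x)W\subseteq W$ for all $x$ if and only if $xW\subseteq W$ for all $x$, so the submodule lattices of $M_{1}(\bm{\lambda},\bm{\mu})$ and of its twist coincide as sets of subspaces. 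In particular, proper submodules go to proper submodules and irreducible quotients go to irreducible quotients. I would also note, as a clarifying remark, that $\widetilde\rho_s(I)=\sum_{j}E_{j-s,j-s}=I$, so the Casimir is fixed; hence the relevant submodule, as a subspace, is again $\langle (I-d)\,{\C}[I]\,{\wh}\rangle$, and one obtains
$$ \widetilde\rho_s(L(d,\bm{\lambda},\bm{\mu}))\;=\;\frac{\rho_s(M_{1}(\bm{\lambda},\bm{\mu}))}{\langle (I-d)\,{\C}[I]\,{\wh}\rangle}, $$
which is irreducible by the previous sentence.

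Finally, substituting $\mathcal L=L(d,\bm{\lambda},\bm{\mu})$ into Corollary \ref{primjena-gl-aut}(2) delivers the statement verbatim. I do not expect a genuine obstacle: all the substance has already been absorbed into Corollary \ref{primjena-gl-aut} and Theorem \ref{irred-description}, and the only thing to keep honest is that the twisting functor is exact and invertible, so that both irreducibility and the ``proper quotient'' property transfer from $L(d,\bm{\lambda},\bm{\mu})$ to $\widetilde\rho_s(L(d,\bm{\lambda},\bm{\mu}))$; the fixing of $I$ under $\widetilde\rho_s$ is what makes the resulting quotient transparently of the same form.
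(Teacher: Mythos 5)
Your proposal is correct and follows exactly the paper's route: the corollary is obtained by applying Corollary \ref{primjena-gl-aut}(2) to the irreducible quotient $L(d,\bm{\lambda},\bm{\mu})$ supplied by Theorem \ref{irred-description}. Your additional observations (that twisting by $\widetilde\rho_s$ preserves the submodule lattice and that $\widetilde\rho_s(I)=I$) are accurate elaborations of what the paper leaves implicit, but they do not change the argument.
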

}

 \section{Bosonic realisations of  $M_{1}(\bm{\lambda},\bm{\mu})$ and $L(d,\bm{\lambda},\bm{\mu})$ }
 
 \label{sect-bosonic}
K. Tanabe in \cite{T2} showed that the Whittaker modules for the  rank one Heisenberg vertex algebra remain irreducible when we restrict them to the singlet vertex algebra  introduced in  \cite{A-singlet}. Since $\mathcal W_{1+\infty}$-algebra at central charge $c=-1$ is isomorphic to the tensor product  $\mathcal W_{3,-2} \otimes M_1(1)$, where $\mathcal W_{3,-2}$ is the singlet vertex algebra  at central charge $ -2$ and $M_1(1)$ is rank one Heisenberg vertex algebra (cf. \cite{Wa}), $\mathcal W_{1+\infty, c=-1}$ can be realized as a  subalgebra of the rank two Heisenberg vertex algebra $M_2(1)$.

 Using Tanabe's result, one shows that irreducible Whittaker $M_2(1)$--modules remain irreducible when we restrict them to the vertex algebra $\mathcal W_{1+\infty, c=-1}$. But it is a hard problem to identify them with our Whittaker modules $L(d,\bm{\lambda},\bm{\mu})$  realised as quotients of Whittaker modules for the Weyl vertex algebra.  In this article, we identify modules $L(d,\bm{\lambda},\bm{\mu})$ as $M_2(1)$--modules in the cases when $M_{1}(\bm{\lambda},\bm{\mu})$ has the structure of a $\Pi(0)$--module which will be described  Subsection \ref{modules-pi-whitt}. But in general, our Whittaker modules are different to those obtained from Tanabe paper.

\subsection{Realisation of $M_{1}(\bm{\lambda},\bm{\mu})$ from $\Pi(0)$--modules}
\label{modules-pi-whitt}
Let $V_L= M_{\gamma,\delta} (1) \otimes {\C}[L]$ be the lattice vertex algebra associated to  the lattice $L={\Z}\gamma  + {\Z} \delta$ with products $$\langle \gamma, \gamma \rangle = \langle \delta, \delta \rangle =0, \quad \langle \gamma, \delta \rangle = 2,$$
where $ M_{\gamma,\delta} (1)$ is the Heisenberg vertex algebra generated by $\gamma(z), \delta(z)$, and ${\C}[L]$ the group algebra of the lattice $L$.

Consider the following subalgebra of $V_L$:
 $$\Pi(0) = M_{\gamma,\delta} (1) \otimes {\C}[{\Z} \gamma]. $$
Let $M_{\varphi} (0)$ be  the commutative vertex algebra   generated by the commutative even field $\varphi(z) =\sum_{i \in {\Z}} \varphi(i) z^{-i-1}$, so
$$ M_{\varphi} (0) = {\C}[\varphi(-1), \cdots]. $$
For ${\chi} =\chi(z) = \sum_{n \in {\Z}} \chi_i z^{-i-1} \in {\C}((z))$, let  $L_{\varphi} (\chi)= {\C}  {\bf 1}_{\chi} $ be the $1$--dimensional  $M_{\varphi} (0)$--module such that $\varphi(i) \equiv \chi_i  \mbox{Id}  $ on $L_{\varphi} (\chi)$.

Note that  $\gamma, \overline \delta = \delta - 2 \varphi, e^{\pm \gamma}$ generate a subalgebra $\widetilde \Pi(0)$ of  $M_{\varphi} (0) \otimes \Pi(0)$ and that $\widetilde \Pi(0) \cong \Pi(0)$.

There is an embedding of vertex algebras $\Phi: M \hookrightarrow \widetilde \Pi(0) \hookrightarrow M_{\varphi} (0) \otimes \Pi(0)$ given by
 $$ \Phi(a) = e^{\gamma}, \Phi(a^*) = -\frac{1}{2} (\gamma (-1) + \delta(-1) -2 \varphi(-1)) e^{-\gamma}.$$

 Let $a^{-1} = e^{-\gamma}$. Define $a (n) = e^{\gamma}_n, a^{-1} (n) = e^{-\gamma}_{n-2}$. 
For ${\lambda} \in {\C}$, $\lambda \ne 0$,  let 
$\Pi_{\lambda}$
be the ${\Z}_{\ge 0}$--graded irreducible Whittaker  $\Pi(0)$--module (cf. \cite{ALZ}), generated by the Whittaker vector $v_{\lambda}$ such that
 $$
   a(n) v_{\lambda}  =   \lambda \delta_{n,0}  v_{\lambda}, \quad a^{-1} (n)    v_{\lambda} = \frac{1}{\lambda} \delta_{n,0} v_{\lambda} \quad (n \in {\Z}_{\ge 0}).$$
 As   vector spaces:
 $$ \Pi_{\lambda} \cong   {\C}[ \gamma (-n), \delta(-n+1) \ \vert \ n \in {\Z}_{>0}],    \quad
 (\Pi_{\lambda} )_{top}  = {\C}[\delta(0)] v_{\lambda} \cong {\C}[\delta(0)]. $$

Now consider module $L_{\varphi} (\chi) \otimes \Pi_{\lambda}$. Define $w_{\lambda, \chi} =  {\bf 1}_{\chi}  \otimes v_{\lambda}$.

Assume that  $\bm{\mu} =(\mu_1, \mu_2, \cdots, \mu_n)$ 
$$ \chi_i = \lambda \mu_i, \ i=1, \dots, n, \chi_j = 0 \quad \forall j > n. $$

We have:
$$
a^*(i)  w_{\lambda, \chi} = \mu_i  w_{\lambda, \chi}, \quad i=1, \dots, n, \quad a^*(i)  w_{\lambda, \chi} = 0, \quad \mbox{for} \ i > n.
$$

\begin{proposition}   Assume that $\lambda \ne 0$, $\mu_n \ne 0$, $\bm{\lambda} =(\lambda, 0, 0, \dots )$, $\bm{\mu} =(\mu_1, \mu_2, \cdots, \mu_n)$ and 
$$\chi(z) = \sum_{i\in {\Z}_{\le 0}}  \chi_i z^{-i-1}  + \frac{1}{\lambda} \sum_{i=1} ^{n} \mu_i z^{-i-1},
 $$
 Then  we have:
 \begin{itemize}
 \item   $M_{1}(\bm{\lambda},\bm{\mu}) \cong L_{\varphi} (\chi) \otimes \Pi_{\lambda}$
 as  modules for the Weyl vertex algebra $M$.
 \item $M_{1}(\bm{\lambda},\bm{\mu})$  has the structure of an irreducible $\Pi(0)$--module.
 \end{itemize}
\end{proposition}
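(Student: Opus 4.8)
The plan is to realize the claimed isomorphism by matching Whittaker data on the two sides and then invoking irreducibility, so that the first bullet and the second bullet fall out together. First I would verify that the distinguished vector $w_{\lambda,\chi}=\mathbf{1}_{\chi}\otimes v_\lambda$ of $L_\varphi(\chi)\otimes\Pi_\lambda$ satisfies exactly the relations that define $M_1(\bm{\lambda},\bm{\mu})$ as a quotient of $\widehat{\mathcal A}$. Since $\Phi(a)=e^{\gamma}$ lies in the $\Pi(0)$-factor and acts only on $\Pi_\lambda$, the relations $a(0)w_{\lambda,\chi}=\lambda w_{\lambda,\chi}$ and $a(i)w_{\lambda,\chi}=0$ for $i\ge1$ are immediate from the definition of $\Pi_\lambda$. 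The substantive point is the action of the modes $a^*(i)$: expanding the field $a^*(z)=Y\!\left(-\tfrac12(\gamma(-1)+\delta(-1)-2\varphi(-1))e^{-\gamma},z\right)$ through the lattice vertex-operator calculus, and using that $\varphi(i)$ acts on $L_\varphi(\chi)$ by the scalar $\chi_i$, one checks that the prescribed choice of $\chi$ forces $a^*(i)w_{\lambda,\chi}=\mu_i w_{\lambda,\chi}$ for $1\le i\le n$ and $a^*(i)w_{\lambda,\chi}=0$ for $i>n$; this is precisely the computation recorded just before the statement.

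Granting these relations, the universal property of the Whittaker module gives an $\widehat{\mathcal A}$-module homomorphism (equivalently, an $M$-module homomorphism) $\psi\colon M_1(\bm{\lambda},\bm{\mu})\to L_\varphi(\chi)\otimes\Pi_\lambda$ with $\psi(\wh)=w_{\lambda,\chi}$. Because $M_1(\bm{\lambda},\bm{\mu})$ is irreducible as an $\widehat{\mathcal A}$-module by \cite{ALPY} and $w_{\lambda,\chi}\neq0$, the map $\psi$ is injective. What remains is surjectivity, i.e.\ that $w_{\lambda,\chi}$ generates $L_\varphi(\chi)\otimes\Pi_\lambda$ over $\widehat{\mathcal A}\cong\widetilde\Pi(0)$.

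The key observation for surjectivity, which simultaneously yields the second assertion, is that the action of $\widetilde\Pi(0)=\Phi(M)$ on $L_\varphi(\chi)\otimes\Pi_\lambda$ generates the same algebra of operators as the action of $\Pi(0)$ on $\Pi_\lambda$. Indeed, $\gamma$ and $e^{\pm\gamma}$ lie in the $\Pi(0)$-factor and act only on $\Pi_\lambda$, while $\overline\delta=\delta-2\varphi$ acts as $\delta(m)-2\chi_m\,\mathrm{Id}$; adding back the scalar operators $2\chi_m\,\mathrm{Id}$, all of which belong to the vertex-algebra action, recovers the genuine modes $\delta(m)$. Hence the subalgebra of operators generated by $\Phi(M)$ coincides with that generated by $\Pi(0)$ on the $\Pi_\lambda$-factor. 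Since $\Pi_\lambda$ is irreducible over $\Pi(0)$ with cyclic vector $v_\lambda$ (cf.\ \cite{ALZ}) and $L_\varphi(\chi)$ is one-dimensional, the vector $w_{\lambda,\chi}$ is cyclic and $L_\varphi(\chi)\otimes\Pi_\lambda$ is irreducible as a module for $\widetilde\Pi(0)\cong\Pi(0)$. This furnishes surjectivity of $\psi$, hence the $M$-module isomorphism $M_1(\bm{\lambda},\bm{\mu})\cong L_\varphi(\chi)\otimes\Pi_\lambda$, and at the same time equips $M_1(\bm{\lambda},\bm{\mu})$ with the structure of the irreducible $\Pi(0)$-module $\Pi_\lambda$, which is the second bullet.

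I expect the main obstacle to be the bosonic computation of $a^*(i)w_{\lambda,\chi}$: one must carefully track the contractions between $e^{-\gamma}$ and the currents $\gamma(z),\delta(z),\varphi(z)$ and confirm that the positive modes collapse to the prescribed scalars $\mu_i$, which is exactly where the precise form of $\chi$ and the normalization $\langle\gamma,\delta\rangle=2$ enter. By contrast, the injectivity (from irreducibility of $M_1(\bm{\lambda},\bm{\mu})$) and the operator-algebra equivalence underlying surjectivity and $\Pi(0)$-irreducibility are essentially formal once the Whittaker relations have been established.
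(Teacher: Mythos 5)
Your overall architecture (verify the Whittaker relations on $w_{\lambda,\chi}$, invoke the universal property of the quotient $\widehat{\mathcal A}/I$ to get $\psi$, deduce injectivity from the irreducibility of $M_{1}(\bm{\lambda},\bm{\mu})$ over $\widehat{\mathcal A}$) is the right one and matches what the paper implicitly relies on --- the paper states the proposition immediately after recording exactly those relations and gives no further argument. The second bullet, irreducibility of $L_{\varphi}(\chi)\otimes\Pi_{\lambda}$ as a $\Pi(0)$--module, is also handled correctly: the $\varphi$--modes act by the scalars $\chi_i$, so the operator algebras of $\widetilde\Pi(0)$ and of $\Pi(0)$ on this module coincide, and irreducibility follows from \cite{ALZ}.

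However, your surjectivity step contains a genuine gap: you identify $\Phi(M)$ with $\widetilde\Pi(0)$ (``the action of $\widetilde\Pi(0)=\Phi(M)$''), and this is false. The map $\Phi$ is a proper embedding $M\hookrightarrow\widetilde\Pi(0)\cong\Pi(0)$: for instance, the charge-zero part of $\Phi(M)$ is $M^{0}\cong\mathcal W_{1+\infty,c=-1}\cong\mathcal M(2)\otimes M_1(1)$, which is a \emph{proper} subalgebra of the charge-zero part $M_{\gamma,\overline\delta}(1)\cong M_2(1)$ of $\widetilde\Pi(0)$, and $e^{-\gamma}$ does not lie in $\Phi(M)$. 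Consequently, cyclicity (or irreducibility) of $L_{\varphi}(\chi)\otimes\Pi_{\lambda}$ over $\Pi(0)$ does not transfer to cyclicity of $w_{\lambda,\chi}$ over the smaller algebra $\Phi(M)\cong M$, and surjectivity of $\psi$ is not established by your argument. What is needed is a direct cyclicity argument over $\widehat{\mathcal A}$: the delicate part is the infinite-dimensional top level $(\Pi_{\lambda})_{top}={\C}[\delta(0)]v_{\lambda}$, which must be reached using only the modes $a(n)=e^{\gamma}_n$ and $a^{*}(n)=\Phi(a^{*})_n$. This can be done by computing that $a^{*}(0)w_{\lambda,\chi}=-\tfrac{1}{2\lambda}\bigl(\delta(0)+c\bigr)w_{\lambda,\chi}+(\text{terms of lower order})$ for a constant $c$, so that $a^{*}(0)^{k}w_{\lambda,\chi}$ has leading term proportional to $\delta(0)^{k}w_{\lambda,\chi}$; together with the free action of the negative modes of $\gamma$ and $\delta$ (extracted from $a(-n)$ and $a^{*}(-n)$ by a leading-term/filtration argument as in \cite{AdP-2019}) this yields all of $L_{\varphi}(\chi)\otimes\Pi_{\lambda}$. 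Your concluding remark that surjectivity is ``essentially formal'' once the Whittaker relations are checked is therefore not accurate; this is precisely the non-formal part of the identification.
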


\begin{remark} \label{arguments-1}
Note that if $\bm{\lambda}$ has  not of the form  $(\lambda, 0, 0, \dots )$, then the $M$--module structure on  $M_{1}(\bm{\lambda},\bm{\mu})$ can not be extended to a structure of  $\Pi(0)$--module.  We simply can not define the vertex operator
\bea Y_{M_{1}(\bm{\lambda},\bm{\mu})} (e^c, z) \label{verteks-ec} \eea
in the usual sense.
But we believe that the certain generalised vertex operator of type  (\ref{verteks-ec}) can be constructed using irregular vertex operators as in the Gaiotto paper \cite{Gaiotto}.
\end{remark}

\subsection{Free field realisation of $L(d,\bm{\lambda},\bm{\mu})$: the case $M_{1}(\bm{\lambda},\bm{\mu})$ is a $\Pi(0)$--module. }

We start with realisation of $M_{1}(\bm{\lambda},\bm{\mu})$ as a $\Pi(0)$--module. Note  that the operator $\delta(0) \in \mbox{End} (M_{1}(\bm{\lambda},\bm{\mu}))$ commutes with the action of  $M_{\varphi}(0) \otimes M_{\gamma, \delta}(1)$, and since
$M^0 = \mathcal W_{1+ \infty,-1} \subset M_{\varphi}(0) \otimes M_{\gamma, \delta}(1)$, we get that $\delta(0)$ commutes with the action of  $\mathcal W_{1+ \infty,-1}$.

By construction we have that $M_{1}(\bm{\lambda},\bm{\mu})$ is a module for  the Heisenberg vertex algebra $ M_{\gamma, \overline \delta }(1)$, generated by $\gamma (z) $ and $\overline \delta (z)  = \delta(z)  -2  \varphi (z)$, with the free action of $\overline \delta(0) = \delta(0) -2  \chi_0$.

For each $d \in {\C}$,  we get the following irreducible  $\mathcal W_{1+ \infty,-1}$--module $$  L(d,\bm{\lambda},\bm{\mu}) =\frac{ M_{1}(\bm{\lambda},\bm{\mu})}{ J_d},$$
where $J_d   =  \mathcal W_{1+ \infty,-1}.  (\overline \delta(0) -d) w_{\lambda, \chi} $.

This implies that $   L(d,\bm{\lambda},\bm{\mu})$ is an $ M_{\gamma, \overline \delta }(1)$--module such that $\overline \delta(0)$ acts as $d \cdot \mbox{Id}$. Therefore we have the following:

\begin{proposition} \label{identification-1}
 Identify $\mathcal W_{1+ \infty,-1}$ as a subalgebra of the Heisenberg vertex algebra
 $M_{\gamma, \overline \delta }(1)$.    Assume that
 $$\bm{\lambda}=(\lambda, 0, 0, \dots ), \ \bm{\mu} = (\mu_1, \cdots, \mu_n). $$
 Then the Whittaker $\mathcal W_{1+ \infty,-1}$--module $ L(d,\bm{\lambda},\bm{\mu})$  has the structure of an   irreducible  Whittaker  module for the Hiesenberg vertex algebra $M_{\gamma, \overline \delta }(1)$, generated by the Whittaker vector $\wh$
  such that 
 $ \gamma(0) \equiv - \mbox{Id}, \quad \overline \delta(0)  \equiv  d \cdot  \mbox{Id}$ and $$ \gamma(i) {\wh} =0, \quad   \overline \delta(n) {\wh} =   - 2 \frac{\mu_i}{\lambda}  {\wh} \quad (n \in {\Z}_{>0}). $$
\end{proposition}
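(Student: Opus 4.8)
The plan is to read the Heisenberg action off the free-field realisation $M_{1}(\bm\lambda,\bm\mu)\cong L_\varphi(\chi)\otimes\Pi_\lambda$ of the previous proposition, and then deduce irreducibility from the fact that $\gamma$ and $\overline\delta$ form a dual isotropic pair. First I would fix the cyclic vector $\wh=w_{\lambda,\chi}={\bf 1}_\chi\otimes v_\lambda$ and observe that, since $\mathcal W_{1+\infty,-1}=M^0\subset M_{\gamma,\overline\delta}(1)$ and the fields $\gamma(z)$ and $\overline\delta(z)=\delta(z)-2\varphi(z)$ act on $L_\varphi(\chi)\otimes\Pi_\lambda$, the module $L(d,\bm\lambda,\bm\mu)$ is automatically a module for the rank-two Heisenberg vertex algebra $M_{\gamma,\overline\delta}(1)$, with the $\mathcal W_{1+\infty,-1}$-action being the restriction.

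Next I would compute the Whittaker functional on $\wh$. On the $M_\varphi(0)$-factor one has $\varphi(i)\equiv\chi_i$ and $\gamma$ acts trivially, so $\overline\delta(i)\wh=\delta(i)\wh-2\chi_i\wh$. On $\Pi_\lambda$ the $\Z_{\ge0}$-grading forces $\gamma(i)v_\lambda=\delta(i)v_\lambda=0$ for $i>0$, since these modes lower the degree and annihilate the top component $(\Pi_\lambda)_{top}=\C[\delta(0)]v_\lambda$; moreover $\gamma(0)$ commutes with all creation operators $\gamma(-n),\delta(-n)$ and hence acts as the scalar dictated by the construction of $\Pi_\lambda$, namely $-1$. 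Combining these gives $\gamma(0)\wh=-\wh$, $\gamma(i)\wh=0$ for $i>0$, and, using the relation $\chi_n=\mu_n/\lambda$ coming from the choice of $\chi$, $\overline\delta(n)\wh=-2\chi_n\wh=-2(\mu_n/\lambda)\wh$ for $n>0$. As a cross-check one verifies that $J^0$ is proportional to $\gamma-\overline\delta$, which reproduces both the Heisenberg level $-1$ relations and the affine relation between $I=J^0(0)$ and $\gamma(0),\overline\delta(0)$.

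Passing to the quotient $L(d,\bm\lambda,\bm\mu)=M_{1}(\bm\lambda,\bm\mu)/J_d$ with $J_d=\mathcal W_{1+\infty,-1}.(\overline\delta(0)-d)w_{\lambda,\chi}$ fixes $\overline\delta(0)\equiv d$ and identifies the underlying space with the Fock-type module $\C[\gamma(-n),\overline\delta(-n):n\ge1]\wh$. For irreducibility I would use that the only nonzero bracket among $\{\gamma(\pm n),\overline\delta(\pm n):n\ge1\}$ is $[\gamma(n),\overline\delta(-n)]=2n$; consequently, for $n>0$, the operator $\gamma(n)$ acts as $2n\,\partial_{\overline\delta(-n)}$ (its scalar part vanishing because $\gamma(n)\wh=0$) and $\overline\delta(n)$ acts as $-2(\mu_n/\lambda)\,\mathrm{Id}+2n\,\partial_{\gamma(-n)}$. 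Thus every partial derivative $\partial_{\gamma(-n)},\partial_{\overline\delta(-n)}$ lies in the operator algebra generated by the $\gamma(m),\overline\delta(m)$, so from any nonzero vector one descends to a nonzero multiple of $\wh$; since $\wh$ is cyclic this proves that $L(d,\bm\lambda,\bm\mu)$ is irreducible, and the eigenvalues above exhibit it as a Whittaker module for $M_{\gamma,\overline\delta}(1)$.

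The main obstacle is the precise bookkeeping of constants in the second step, above all the value $\gamma(0)\equiv-1$ and the normalisation $\overline\delta(n)\wh=-2(\mu_n/\lambda)\wh$: these rest on the exact conventions of the embedding $\Phi$ and of the ALZ module $\Pi_\lambda$, and in particular on confirming that $\gamma(i),\delta(i)$ for $i>0$ genuinely annihilate $v_\lambda$ and that $\gamma(0)$ is the asserted scalar. I must also ensure that the Whittaker functional is well defined, i.e. that it has finite support (guaranteed by $\bm\mu=(\mu_1,\dots,\mu_n)$ together with $\gamma(i)\wh=0$), and that quotienting by $\overline\delta(0)-d$ is exactly what removes the free $\delta(0)$-action responsible for reducibility before the quotient. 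Once these constants are in place, the dual-pair irreducibility argument is routine.
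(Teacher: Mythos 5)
Your proposal is correct and follows essentially the same route as the paper: it reads the $M_{\gamma,\overline\delta}(1)$--action off the realisation $M_{1}(\bm\lambda,\bm\mu)\cong L_{\varphi}(\chi)\otimes\Pi_{\lambda}$, notes that quotienting by $J_d$ fixes $\overline\delta(0)\equiv d$, and identifies the result as the Whittaker Fock module with the stated functional. The only difference is that you spell out the irreducibility via the derivative action of the positive modes of the dual pair $\gamma,\overline\delta$, a standard step the paper leaves implicit.
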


 \subsection{  The general case  and non-tensor product modules}
Proposition  \ref{identification-1} shows that if $\bm{\lambda}=(\lambda, 0, 0, \dots )$, then 
$L(d,\bm{\lambda},\bm{\mu})$  has the structure of a module for the Heisenberg vertex algebra $M_2(1)$. One can ask if $L(d,\bm{\lambda},\bm{\mu})$ in general  are  isomorphic to a Whittaker module for $M_2(1)$. 

It is important note that  $M^0= W_{1+\infty, c=-1} \cong \mathcal M(2) \otimes M_1(1)$, and therefore Whittaker modules for $M^0$ can be  obtained by using recent Tanabe paper \cite{T2} on Whittaker modules for the singlet algebra $\mathcal M(p)$, $p \ge 2$.  He proved in \cite[Theorem 1.1]{T2} that every irreducible $\mathcal M(p)$--module which is generated by the Virasoro Whittaker vector can be realized as a Whittaker  module for the Heisenberg vertex algebra $M_1(1)$.

 Next result shows that in general $L(d,\bm{\lambda},\bm{\mu})$  is not realized as an irreducible, Whittaker  $M_2(1)$--module.
 

 
 \begin{proposition} \label{non-isom} Assume that $\bm{\lambda} = (\lambda_0, \dots, \lambda_n)$, $\bf{\mu}  = (\mu_1, \dots, \mu_m)$,  $n >0$ and  $\lambda_n, \mu_m \ne 0$. 
Then $L(d,\bm{\lambda}, \bm{\mu}) $ does not have the form (\ref{form-decomp}), i.e, it is not realized as a Whittaker $M_2(1)$--module.
 %
 \end{proposition}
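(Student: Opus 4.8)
The plan is to prove the contrapositive. I will show that if $L(d,\bm{\lambda},\bm{\mu})$ were of the form (\ref{form-decomp}), then every vector would be locally finite under each operator $J^0(k)$, $k>0$ (where $v$ is \emph{locally finite} for $T$ if $\dim\mathrm{span}\{T^N v:N\ge 0\}<\infty$), and then I will exhibit a vector on which $J^0(n)$ does not act locally finitely, using the hypothesis $\lambda_n\neq 0$ with $n>0$.

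First I would record the \emph{necessary condition}. Under the isomorphism $\mathcal W_{1+\infty,c=-1}\cong\mathcal M(2)\otimes M_1(1)$, the weight-one field $J^0$ must lie in $\mathbf 1\otimes M_1(1)$, since $\mathcal M(2)\cong\mathcal W(2,3)$ is generated in conformal weights $\ge 2$ and has trivial weight-one space. Hence, writing $h$ for the Heisenberg generator of $M_1(1)$, one has $J^0=c_0(\mathbf 1\otimes h)$ for a nonzero scalar $c_0$, so that $J^0(k)=c_0(\mathbf 1\otimes h(k))$ acts on a tensor product $Z_1\otimes Z_2$ only through the second factor. For an irreducible Whittaker $M_1(1)$--module $Z_2$ with Whittaker vector $w_2$, each positive mode decomposes as $h(k)=\eta_k\,\mathrm{Id}+D_k$, where $D_k$ strictly lowers the degree in the creation generators $h(-1),h(-2),\dots$; thus $h(k)$ is locally finite on $Z_2$, and therefore $J^0(k)=c_0(\mathbf 1\otimes h(k))$ is locally finite on all of $Z_1\otimes Z_2$. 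This gives the necessary condition: any module of type (\ref{form-decomp}) has every vector locally finite under every $J^0(k)$, $k>0$.

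Next I would produce the violation directly on $\wh$. Using $J^0(k)=\sum_j :a(k-j)a^*(j):$ together with $[J^0(k),a^*(p)]=a^*(p+k)$ and $[J^0(k),a(p)]=-a(p+k)$, the normal-ordered summand with $j=0$ is $:a(n)a^*(0):=a^*(0)a(n)$, so that $J^0(n)\wh$ contains the term $\lambda_n\,a^*(0)\wh$. Iterating, I would track the top degree in the creation mode $a^*(0)$: since $a(n)$ commutes with $a^*(0)$ for $n\ge 1$, and the remaining summands of $J^0(n)$ commute past $a^*(0)$ without raising the $a^*(0)$--degree, the leading term of $J^0(n)^N\wh$ is $\lambda_n^{\,N}\,a^*(0)^N\wh$. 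Choosing the index $i_0=n$ (legitimate since $\lambda_n\neq 0$ and $n\ge 1$), the mode $a^*(0)$ is one of the free generators in the vector-space description $\mathcal M\cong\mathbb C[a(-p-1),a^*(-q):p,q\ge 0,\ q\neq i_0]\otimes\mathbb C[I]$ used in Proposition \ref{whit-opis}; the $\mathbb C[I]$--tower collapses to scalars in the quotient (as $I\bar{\wh}=d\bar{\wh}$), so the images $a^*(0)^N\wh$ remain linearly independent in $L(d,\bm{\lambda},\bm{\mu})$. Consequently $\{J^0(n)^N\wh:N\ge 0\}$ is infinite-dimensional, i.e.\ $\wh$ is not locally finite under $J^0(n)$, contradicting the necessary condition. (The standing hypotheses $\bm{\mu}\neq 0$, $\mu_m\neq 0$ are what guarantee that $L(d,\bm{\lambda},\bm{\mu})$ is defined and cyclic via Proposition \ref{ired-gl}.)

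The main obstacle is the necessary-condition step rather than the computation: one must be certain that $J^0$ sits entirely in the $M_1(1)$ factor (the vanishing of the weight-one part of $\mathcal M(2)$) and that the positive Heisenberg modes act locally finitely on \emph{every} irreducible Whittaker $M_1(1)$--module, including irreducible quotients. The violation itself is a triangularity argument in the $a^*(0)$--grading and is routine once the commutators above are in hand.
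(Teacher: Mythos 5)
Your proposal is correct and follows essentially the same route as the paper's proof: both derive local finiteness of every $J^0(k)$, $k>0$, as a necessary condition for the tensor-product form (\ref{form-decomp}), and both violate it at $w=[\wh]$ with $k_0=n$ by showing that $J^0(n)^N\wh$ has unbounded $a^*(0)$--degree and that the images $[a^*(0)^N\wh]$ stay linearly independent in $L(d,\bm{\lambda},\bm{\mu})$. The only differences are cosmetic: you justify the independence via the basis description of Proposition \ref{whit-opis} with $i_0=n$, where the paper instead applies the lowering operator $(E_{0,m}-\lambda_0\mu_m)^m$ to a putative vanishing combination, and you spell out the two preliminary facts (that $J^0$ lies in the $M_1(1)$ factor and that positive Heisenberg modes act locally finitely on Whittaker $M_1(1)$--modules) which the paper leaves implicit.
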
 
 
\begin{proof}
  
 For any $v \in M_1 (\bm{\lambda},\bm{\mu})$, denote by $[v]$ the projection of $v$ to $L(d, \bm{\lambda},\bm{\mu})$. Then $[\wh]$ is the Whittaker vector $L(d, \bm{\lambda},\bm{\mu})$ for the Whittaker pair $(\widehat{\mathfrak{gl}}, \mathfrak p)$ (cf. Subsection \ref{gl-pair}),  which is unique, up to a scalar factor.
 
  Assume that $L(d,\bm{\lambda}, \bm{\mu})\cong Z_1 \otimes Z_2$ where $Z_1$ is  an irreducible  Whittaker $\mathcal M(2)$--module, and $Z_2$ is an irreducible Whittaker $M_{J^0}(1)$--module. An irreducible Whittaker $M_{J^0}(1)$--module is generated by the Whittaker vector $wh_{heis}$ such that for $k \ge 0$:
  $$ J^0( k) wh_{heis} = \chi_k wh_{heis}, \quad \ch_k \in {\C}, $$
  and one easily sees that each vector in the Whittaker module  must be  locally finite for $J^0(k)$, $k \ge 0$.
This implies that   arbitrary vector $w \in L(d,\bm{\lambda}, \bm{\mu})$ is locally finite for $J^0(k_0)$  for all  $k_0 >0$, meaning that
\bea  \dim \mbox{span}_{\C} \{ J ^0( k_0) ^i w \vert \ i \ge 0 \} < \infty. \label{konacno} \eea
  We shall prove that it is not possible. We shall find $k_0 >0$  and $w$ such that (\ref{konacno}) does not hold. We take  $w = [\wh]$.

  Now we consider
  $J^0(k_0) ^i \wh$, $i \ge 1$. For simplicity we consider the case $n \ge m$, when $k_0 =n$. We have 
  $$ \mbox{span}_{\C} \{ J ^0( k_0) ^i [\wh]  \vert \ i \ge 0 \}  = \mbox{span}_{\C} 
  \{ [a^*(0) ^i \wh]  \vert \ i \ge 0 \}.$$

  Assume that  there are constants $C _0, \dots, C_m$, $C_m \ne 0$ such that
  \bea  C_0 [\wh] + \cdots + C_m  [a^*(0) ^m  \wh] = 0. \label{lin-comb} \eea
Set
$$ u= C_0 \wh + \cdots + C_m  a^*(0) ^m  \wh \in M_{1}(\bm{\lambda}, \bm{\mu}). $$

 Since 
$$\frac{1}{\lambda_n} (E_{0, m} -\lambda_0 \mu_m ) ^m u    = \nu \wh \quad (\nu \ne 0),$$
we conclude that $u$ is not in  the ideal generated by $(I-d) {\C}[I]$, implying that $[u] \ne 0$.  This contradicts relation (\ref{lin-comb}). Therefore 
$ \mbox{span}_{\C} \{ J ^0( k_0) ^i [\wh]  \vert \ i \ge 0 \} $ is infinite-dimensional, which implies that $L(d, \bm{\lambda}, \bm{\mu})$ does not have the form  (\ref{form-decomp}). The proof follows.

   \end{proof}

  \begin{remark} In Appendix we shall present a different proof of Proposition \ref{non-isom}  by proving that  $L(d,\bm{\lambda}, \bm{\mu}) $ is isomorphic an  irreducible module for the Heisenberg-Virasoro algebra which is not a tensor product module.
  \end{remark}
 
\section{Generalized Whittaker modules for  $\g = \mathfrak{gl}(2\ell , {\C})$}
\label{sect-gl2n}
 
In this Section we take $\g = \mathfrak{gl}(2\ell , {\C})$. Recall from Example \ref{ex-gl2n} that $\g$ has triangular decomposition $\g = \mathfrak n_- \oplus \mathfrak h \oplus \mathfrak n_+$. We take the subalgebra $\mathfrak n \subset \mathfrak n_+$.
$$ \mathfrak n = \mbox{span}_{\C} \{ e_{i, j + \ell} \ \vert  i, j = 1, \dots, \ell \}. $$
 Now we are interested in Whittaker modules 
 with respect to the pair $(\g, \mathfrak n)$, which we   called generalized Whittaker modules.  In particular we are interested in a construction of simple quotients of the universal Whittaker module $$ M_{\lambda} = U(\mathfrak g) \otimes _{ U(\mathfrak n)} {\C} v_{\lambda}.$$

 \subsection{Weyl algebra $\mathcal A_{2\ell}$ and its Whittaker modules.} Here we recall the definition of the  Weyl algebra $\mathcal A_{\ell}$. It is an complex  associative algebra with generators
 $$ a_i, a^* _i, \quad i=1, \dots, 2\ell$$
 and relations
 $$ [a_i, a_j] = [a^* _i, a^*_j] =0, \quad [a_i, a^*_j] = \delta_{i,j}, \quad (i,j=1, \dots, 2\ell). $$
 
 Define the normal ordering on $\mathcal A$ with
 $$ :x y := \frac{1}{2} ( x y + y x). $$
 In paricular, we have
 $$ :a_i a^* _i: = a_i a^* _i - \frac{1}{2} =  a^* _i a _i  +   \frac{1}{2}. $$
 
 For $\bm{\alpha} = ( \alpha_1,  \dotsc, \alpha_{\ell})$, $\bm{\beta} = (\beta_1, \dotsc, \beta_{\ell} ) \in {\C} ^{\ell}$, we define the Whittaker module $W(\alpha, \beta)$ to be the quotient
\[W(\bm{\alpha}, \bm{\beta}) = \sfrac{ {\mathcal{A}_{2 \ell}}}{{I_{\ell}(\bm{\alpha}, \bm{\beta})}},\]
 where    and $I_{\ell}(\bm{\alpha}, \bm{\beta}) $ is the left ideal \[{I_{\ell}(\bm{\alpha}, \bm{\beta})} = \big\langle a_1-\alpha_1, \dotsc, a_{\ell}  - \alpha_{\ell}, a^{*}_{\ell +1} -\beta_{1} , \dotsc, a^{*}_{2 \ell}  - \beta_{\ell}  \big\rangle.\]

One shows that

\begin{lemma}  $W(  \bm{ \alpha}, \bm{ \beta}) $ is an irreducible ${\mathcal{A}_{2 \ell}}$--module. It is generated by the Whittaker vector $\whf = 1 + {I_{\ell}(\bm{\alpha}, \bm{\beta})}$ such that
$$ a_i \whf  = \alpha_i \whf,  \quad  a^* _{\alpha +i} \whf  = \beta_i \whf, \quad i=1, \dots, \ell.$$
As a vector space 
$$ W(  \bm{ \alpha}, \bm{ \beta}) \cong {\C} [a^* _1, \cdots, a^* _{\ell}, a_{\ell +1}, \cdots, a_{2 \ell}]. $$
\end{lemma}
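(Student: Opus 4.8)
The cyclicity and Whittaker--vector properties are immediate: since $a_i-\alpha_i$ (for $1\le i\le \ell$) and $a^*_{\ell+i}-\beta_i$ (for $1\le i\le \ell$) lie in the defining left ideal $I_\ell(\bm\alpha,\bm\beta)$, the coset $\whf=1+I_\ell(\bm\alpha,\bm\beta)$ satisfies $a_i\whf=\alpha_i\whf$ and $a^*_{\ell+i}\whf=\beta_i\whf$, while $W(\bm\alpha,\bm\beta)=\mathcal A_{2\ell}\whf$ simply because it is a cyclic quotient of $\mathcal A_{2\ell}$. So the real content is the vector--space identification together with irreducibility, and the plan is to extract both from a single Fock--type realization.

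First I would set $F=\C[x_1,\dots,x_\ell,y_{\ell+1},\dots,y_{2\ell}]$ and define a representation $\pi:\mathcal A_{2\ell}\to \operatorname{End}(F)$ by
\begin{align*}
\pi(a^*_i)&=x_i\,\cdot,\qquad \pi(a_i)=\alpha_i+\partial_{x_i}\qquad (1\le i\le \ell),\\
\pi(a_{\ell+i})&=y_{\ell+i}\,\cdot,\qquad \pi(a^*_{\ell+i})=\beta_i-\partial_{y_{\ell+i}}\qquad (1\le i\le\ell).
\end{align*}
A direct check shows these operators satisfy the Weyl relations (the only nonzero brackets are $[\partial_{x_i},x_i]=1$ and $-[y_{\ell+i},\partial_{y_{\ell+i}}]=1$), and that the constant polynomial $1$ is annihilated by $\pi(a_i-\alpha_i)$ and $\pi(a^*_{\ell+i}-\beta_i)$. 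Since these elements generate $I_\ell(\bm\alpha,\bm\beta)$ as a left ideal, $\pi(I_\ell(\bm\alpha,\bm\beta))\cdot 1=0$, so $x\whf\mapsto \pi(x)\cdot 1$ gives a well-defined $\mathcal A_{2\ell}$--module map $\phi:W(\bm\alpha,\bm\beta)\to F$.

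Next I would prove that $W(\bm\alpha,\bm\beta)$ is spanned by the vectors $\prod_{i=1}^{\ell}(a^*_i)^{c_i}\prod_{i=1}^{\ell}(a_{\ell+i})^{d_i}\whf$. This is a PBW/normal--ordering argument by induction on degree: using the commutation relations one moves every ``fixed'' generator $a_i$ ($i\le\ell$) and $a^*_{\ell+i}$ to the right, where it acts on $\whf$ as the scalar $\alpha_i$, resp.\ $\beta_i$, every commutator correction lowering the total degree. Under $\phi$ these spanning vectors map to the monomials $x^{\bm c}y^{\bm d}$, which form a basis of $F$; hence the spanning set is linearly independent, $\phi$ is an isomorphism, and $W(\bm\alpha,\bm\beta)\cong\C[a^*_1,\dots,a^*_\ell,a_{\ell+1},\dots,a_{2\ell}]$ as vector spaces.

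Finally, for irreducibility I would take a nonzero submodule $U$, choose $0\ne u\in U$, and set $f=\phi(u)\ne0$. Since $\pi(a_i-\alpha_i)=\partial_{x_i}$ and $\pi(a^*_{\ell+i}-\beta_i)=-\partial_{y_{\ell+i}}$, the elements $a_i-\alpha_i$ and $a^*_{\ell+i}-\beta_i$ of $\mathcal A_{2\ell}$ act on $F$ by $\pm$ the coordinate derivatives. Choosing a monomial $x^{\bm c}y^{\bm d}$ of maximal total degree occurring in $f$ and applying the corresponding product of these derivative operators sends every other monomial of $f$ to $0$ (by the degree bound) and $x^{\bm c}y^{\bm d}$ to a nonzero constant; hence a nonzero multiple of $\whf$ lies in $U$, and cyclicity of $\whf$ forces $U=W(\bm\alpha,\bm\beta)$. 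The only mildly delicate point is the degree bookkeeping in the spanning step; everything else is routine Weyl--algebra computation, and the Fock realization is precisely what makes both the linear independence and the lowering argument for irreducibility transparent.
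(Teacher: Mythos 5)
The paper gives no proof of this lemma at all --- it is prefaced only by ``One shows that'' and the argument is left to the reader --- so there is nothing to compare your write-up against. Your proof is correct and complete: the Fock-type realization on $\C[x_1,\dots,x_\ell,y_{\ell+1},\dots,y_{2\ell}]$ satisfies the Weyl relations as you check, the normal-ordering/PBW argument shows the ordered monomials span, their images are a monomial basis of $F$ so $\phi$ is an isomorphism, and the maximal-degree derivative trick recovers a nonzero multiple of $\whf$ inside any nonzero submodule. This is the standard argument for irreducibility of such Whittaker-type Weyl modules (it parallels the rank-one infinite-dimensional case treated in \cite{ALPY}), and it supplies exactly the verification the paper omits.
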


\subsection{ Embedding of $\g$ into $\mathcal A_{2 \ell}$.}
It is well known that there  is a Lie algebra homomorphism
$$ \Phi : \g \rightarrow  \left( \mathcal A_{2 \ell}\right)_{Lie}$$ uniquely determined by
$$ e_{i,j} \mapsto  : a_i a^* _j:. $$
This implies that each $\mathcal A_{2 \ell}$--module can be treated as  a $\g$--module.

 \subsection{$W(  \bm{ \alpha}, \bm{ \beta}) $ as a $\g$--modules.}
 Now we consider the generalized Whittaker modules $W(  \bm{ \alpha}, \bm{ \beta}) $ as $\g$--modules.  

\begin{proposition} \label{cyclic-gl2n} Assume that $\bm{\alpha} \ne 0$  and  $\bm{\beta} \ne0$. $W(  \bm{ \alpha}, \bm{ \beta}) $  is a Whittaker module for the Whittaker pair $(\g, \n)$. In a particular, $\whf$ is a Whittaker vector such that
$$ e_{i, \ell + j} \whf = \alpha_i \beta_j  \whf \quad \forall i,j \in \{1, \dots, \ell\}$$ and
$ W(  \bm{ \alpha}, \bm{ \beta}) = U(\g) \whf. $

In particular, $W(  \bm{ \alpha}, \bm{ \beta}) $  is a quotient of the universal Whittaker module $M_{\lambda}$ with Whittaker function:
$$ \lambda= \lambda_{  \bm{ \alpha}, \bm{ \beta}} :  {\n}  \rightarrow {\C}, \quad \lambda(e_{i, j+ \ell}) = \alpha_i \beta_j,  \quad  \forall i,j \in \{1, \dots, \ell\}. $$
\end{proposition}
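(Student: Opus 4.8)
The plan is to verify the Whittaker-vector property by a one-line Weyl-algebra computation, to prove cyclicity by a leading-term induction modeled on Proposition \ref{ired-gl}, and to read off the quotient statement from the universal property of $M_\lambda$. For the first point, note that $i\le\ell<\ell+j$ for all $i,j\in\{1,\dots,\ell\}$, so $[a_i,a^*_{\ell+j}]=\delta_{i,\ell+j}=0$ and the normal ordering is trivial: $e_{i,\ell+j}=:a_i a^*_{\ell+j}:=a_i a^*_{\ell+j}$. Using $a^*_{\ell+j}\whf=\beta_j\whf$ and $a_i\whf=\alpha_i\whf$ from the preceding Lemma gives $e_{i,\ell+j}\whf=\alpha_i\beta_j\whf$. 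Since $\n$ is abelian, every linear functional is a Lie functional, so extending linearly shows $\whf$ is a Whittaker vector for $(\g,\n)$ with $\lambda(e_{i,j+\ell})=\alpha_i\beta_j$.

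The core of the proof is the cyclicity $W(\bm{\alpha},\bm{\beta})=U(\g)\whf$. By the preceding Lemma we may realize $W(\bm{\alpha},\bm{\beta})\cong\C[a^*_1,\dots,a^*_\ell,a_{\ell+1},\dots,a_{2\ell}]$, so that the monomials $(a^*_1)^{p_1}\cdots(a^*_\ell)^{p_\ell}(a_{\ell+1})^{q_1}\cdots(a_{2\ell})^{q_\ell}\whf$ form a basis. I would grade this space by total degree in the creation operators $a^*_i,a_{\ell+j}$ ($i,j\le\ell$) and record the action of the Weyl generators on the polynomial model: $a^*_i$ and $a_{\ell+j}$ act as multiplication (degree $+1$), while $a_i=\alpha_i+\partial_{a^*_i}$ and $a^*_{\ell+j}=\beta_j-\partial_{a_{\ell+j}}$ decompose into a scalar of degree $0$ and a derivative of degree $-1$. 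Fixing $i_0,j_0$ with $\alpha_{i_0}\ne0$ and $\beta_{j_0}\ne0$, it follows that the top-degree part of $e_{i_0,i}=:a_{i_0}a^*_i:$ is $\alpha_{i_0}$ times multiplication by $a^*_i$, and that of $e_{\ell+j,\ell+j_0}=:a_{\ell+j}a^*_{\ell+j_0}:$ is $\beta_{j_0}$ times multiplication by $a_{\ell+j}$, every remaining term having strictly smaller degree. Consequently, applying $e_{i_0,i_1}\cdots e_{i_0,i_s}\,e_{\ell+j_1,\ell+j_0}\cdots e_{\ell+j_r,\ell+j_0}$ to $\whf$ yields $\alpha_{i_0}^s\beta_{j_0}^r$ times the monomial $a^*_{i_1}\cdots a^*_{i_s}a_{\ell+j_1}\cdots a_{\ell+j_r}\whf$ (its top-degree part, a product of commuting multiplication operators) plus strictly lower-degree terms. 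Since $\alpha_{i_0}\beta_{j_0}\ne0$, an induction on total degree---the lower-degree terms lying in $U(\g)\whf$ by the inductive hypothesis, exactly as in Proposition \ref{ired-gl}---shows every basis monomial lies in $U(\g)\whf$; hence $\whf$ is cyclic.

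Finally, the quotient statement is immediate: since $\whf$ satisfies $x\whf=\lambda(x)\whf$ for all $x\in\n$ with $\lambda(e_{i,j+\ell})=\alpha_i\beta_j$, the universal property of $M_\lambda=U(\g)\otimes_{U(\n)}\C v_\lambda$ furnishes a $\g$-module homomorphism $M_\lambda\to W(\bm{\alpha},\bm{\beta})$ sending $v_\lambda\mapsto\whf$, which is surjective by the cyclicity just established; thus $W(\bm{\alpha},\bm{\beta})$ is a quotient of $M_\lambda$. I expect the only genuine difficulty to be the degree bookkeeping in the cyclicity induction; introducing the grading by creation-operator degree is precisely what reduces the argument to the elementary observation that the top-degree parts of the relevant $e_{i,j}$ are nonzero multiples of commuting multiplication operators, so that no cross-commutators between the two operator families need to be tracked.
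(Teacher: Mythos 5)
Your proposal is correct and follows essentially the same route as the paper, which proves this proposition by declaring it ``completely analogous'' to Proposition \ref{ired-gl}: you verify the Whittaker property directly, then establish cyclicity by applying products of $e_{i_0,i}$ and $e_{\ell+j,\ell+j_0}$ (with $\alpha_{i_0},\beta_{j_0}\ne 0$) and inducting away the lower-order terms, exactly the mechanism of the paper's double induction, merely repackaged as a single induction on creation-operator degree via leading symbols. The concluding appeal to the universal property of $M_\lambda$ is also the intended reading of the final assertion.
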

\begin{proof}
The proof is completely analogous to that presented in Proposition \ref{ired-gl} below in the case $\widehat{\mathfrak{gl}}$.
\end{proof}

Note that the central  element $ \sum_{i=1} ^{2 \ell}e_{i,i}  \in \g$ acts on $W(  \bm{ \alpha}, \bm{ \beta}) $ as
$$I =  \sum_{i=1} ^{2 \ell} : a_i a^* _i:.
 $$
 
 Since $$I \whf =\sum_{i=1} ^{\ell} ( \alpha_i a^* _{i}  + \beta_i a_{i+\ell}) \whf, $$ which is not proportional to $\whf$, we conclude that $W(  \bm{ \alpha}, \bm{ \beta}) $ is reducible $\g$--module.

\begin{theorem} For each $d \in {\C}$, $\bm{\alpha},\bm{\beta} \in {\C}^{\ell}$,  $\bm{\alpha},\bm{\beta} \ne 0$, we have
$$L(d,\bm{\alpha},\bm{\beta}) = \frac{   W(\bm{\alpha},\bm{\beta}) }  {\langle  (I-d) {\C}[I] {\whf} \rangle  }$$
is an irreducible $\g$--module.
\end{theorem}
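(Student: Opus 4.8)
The statement is the finite-dimensional counterpart of Theorem~\ref{irred-description}, and the plan is to run the same three-step scheme that worked for $\widehat{\gl}$ --- a derivation lemma, a classification of Whittaker vectors, and a cyclicity argument --- now inside the Weyl algebra $\mathcal A_{2\ell}$ rather than $\widehat{\mathcal A}$. First I would fix the free-field picture. Since $W(\bm{\alpha},\bm{\beta})\cong\C[a_1^*,\dots,a_\ell^*,a_{\ell+1},\dots,a_{2\ell}]$ as a vector space, on this polynomial algebra the generators act by $a_p=\partial_{a_p^*}+\alpha_p$ and $a_{\ell+q}^*=-\partial_{a_{\ell+q}}+\beta_q$ (for $p,q\le\ell$), while $a_p^*$ and $a_{\ell+q}$ act by multiplication. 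Because $I=\sum_i e_{i,i}$ is the identity matrix it is central in $\g$, so it commutes with the whole action; and from $[a_i,I]=a_i$, $[a_i^*,I]=-a_i^*$ one gets, exactly as in the Lemma preceding Proposition~\ref{whit-opis}, the shift relations $a_{i_0}I^k\whf=\alpha_{i_0}(I+1)^k\whf$ and $a_{\ell+j_0}^*I^k\whf=\beta_{j_0}(I-1)^k\whf$, where $i_0,j_0\le\ell$ are chosen with $\alpha_{i_0}\ne0$, $\beta_{j_0}\ne0$. Since $I\whf=\sum_{p}(\alpha_p a_p^*+\beta_p a_{\ell+p})\whf$ has $\alpha_{i_0}a_{i_0}^*$ occurring with nonzero coefficient, a unitriangular change of basis trades the variable $a_{i_0}^*$ for $I$, giving $W\cong\C[a_p^*,a_{\ell+q}\mid p\ne i_0]\otimes\C[I]\whf$.

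Next comes the \emph{derivation lemma}, the analogue of Lemma~\ref{derivacije}. Set $\widehat e_{i,\ell+j}:=e_{i,\ell+j}-\alpha_i\beta_j$; every element of $\n$ acts on a Whittaker vector by the scalar $\alpha_i\beta_j$, so each $\widehat e_{i,\ell+j}$ annihilates $\whf$ and all Whittaker vectors. A short computation with $e_{i,\ell+j}=a_ia_{\ell+j}^*=(\partial_{a_i^*}+\alpha_i)(-\partial_{a_{\ell+j}}+\beta_j)$ shows that on a polynomial $\Phi$ not involving $a_{i_0}^*$ one has $\widehat e_{i_0,\ell+j}\,\Phi I^k\whf=-\alpha_{i_0}(\partial_{a_{\ell+j}}\Phi)(I+1)^k\whf$, and dually, on $\Phi$ not involving $a_{\ell+j_0}$, $\widehat e_{i,\ell+j_0}\,\Phi I^k\whf=\beta_{j_0}(\partial_{a_i^*}\Phi)(I-1)^k\whf$; the point is that avoiding the conjugate variable forces one of the two commuting factors to act by a scalar, leaving a single partial derivative together with an $I$-shift supplied by the relations above.

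For the classification (the analogue of Proposition~\ref{whit-opis}) I would feed a general Whittaker vector $v=\sum_k\Phi_k I^k\whf$ --- with $\Phi_k\in\C[a_p^*,a_{\ell+q}\mid p\ne i_0]$, so every $\Phi_k$ avoids $a_{i_0}^*$ --- through these operators. Applying $\widehat e_{i_0,\ell+j}$ and using $\widehat e_{i_0,\ell+j}v=0$ together with the linear independence of the $(I+1)^k\whf$ forces $\partial_{a_{\ell+j}}\Phi_k=0$ for all $j,k$, so $\Phi_k\in\C[a_p^*\mid p\ne i_0]$; then each such $\Phi_k$ avoids $a_{\ell+j_0}$, and applying $\widehat e_{i,\ell+j_0}$ forces $\partial_{a_i^*}\Phi_k=0$ for all $i\ne i_0$. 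Hence every $\Phi_k$ is constant and $v\in\C[I]\whf$.

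Finally I would handle irreducibility. Because $I$ is central and $\whf$ is cyclic (Proposition~\ref{cyclic-gl2n}), the submodule $\langle(I-d)\C[I]\whf\rangle$ equals $(I-d)W$, so $L(d,\bm{\alpha},\bm{\beta})=W/(I-d)W$ is the quotient on which $I$ acts by the scalar $d$; a leading-term check in the free-variable degree shows $[\whf]\ne0$, and the image of $\C[I]\whf$ collapses to the single line $\C[\whf]$. To prove $L$ simple I would show every nonzero submodule $N$ contains $[\whf]$: from the free-field formulas each $\widehat e_{i,\ell+j}$ has no degree-preserving part and hence strictly lowers the free-variable degree, and it commutes with $I-d$, so it descends to $L$. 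Choosing $[v]\in N$ of minimal (coset) free-variable degree, minimality forces $\widehat e_{i,\ell+j}[v]=0$ for all $i,j$, so $[v]$ is a Whittaker vector; by the classification $[v]\in\C[\whf]$, whence $[\whf]\in N$ and, by cyclicity of $[\whf]$, $N=L$. The main obstacle is precisely this last degree argument: I must verify that the free-variable degree is well defined on the coset space $W/(I-d)W$ even though $I-d$ is inhomogeneous, so that the descent of the $\widehat e_{i,\ell+j}$ and the minimality step are legitimate, and that the reduction genuinely lands on a \emph{nonzero} multiple of $[\whf]$ rather than collapsing into $(I-d)W$.
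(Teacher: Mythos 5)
Your proposal is correct and follows essentially the same route as the paper, which proves this theorem by declaring it analogous to Theorem \ref{irred-description}: you have simply carried out that analogy in full, with the finite-rank versions of the shift relations for $I$, the derivation lemma (Lemma \ref{derivacije}), the classification of Whittaker vectors as $\C[I]\whf$ (Proposition \ref{whit-opis}), and the reduction of an arbitrary nonzero coset to a nonzero multiple of $[\whf]$ by repeated application of the operators $\widehat e_{i_0,\ell+j}$ and $\widehat e_{i,\ell+j_0}$. The degree issue you flag at the end is not a real obstacle, since $I$ is central so the submodule equals $(I-d)W$, and $W\cong\C[a_p^*,a_{\ell+q}\mid p\ne i_0]\otimes\C[I]\whf$ gives $L\cong\C[a_p^*,a_{\ell+q}\mid p\ne i_0][\whf]$ as a vector space, on which the free-variable degree is well defined and strictly lowered by each $\widehat e_{i,\ell+j}$.
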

\begin{proof}Proof is analogous to that of  Theorem \ref{irred-description}.
\end{proof}

 \section*{Appendix: $M_1(\bm{\lambda}, \bm{\mu})$ as a module for the Heisenberg-Virasoro algebra}
 
 In this section we prove  a stronger statement that $M_1(\bm{\lambda}, \bm{\mu})$ is a cyclic module for the Heisenberg-Virasoro vertex subalgebra of $\mathcal W_{1+\infty,-1}$. Our main argument will be that  the action of singlet $\mathcal M(2)$ on the Whittaker vector $\wh$ can be replaced by the action of the singlet Virasoro subalgebra. Next we identify $M_1(\bm{\lambda}, \bm{\mu})$ and $L(d,\bm{\lambda}, \bm{\mu}) $  as restricted modules for the Heisenberg-Virasoro algebra recently classified in \cite{TYZ}.
 \vskip 5mm 
 
 Recall that
 $$J^k(z)  =Y(a^*(-k) a, z) = \sum _{ s \in {\Z} } J^k(s) z^{-k-s-1}. $$
 
 The singlet algebra $\mathcal M(2)$ (cf. \cite{A-singlet}, \cite{Wa}) is generated by the Virasoro vector
 $L = J^1 + \tfrac{1}{2} :(J^0)^2: + \tfrac{1}{2} D J^0$ and by the primary field $H$ of conformal weight $3$.  Recall that the ${\Z}_2$--orbifold of $M$ is isomorphic to the simple affine vertex algebra $L_{-1/2}(\mathfrak{sl}_2)$ generated by
 $$ e = \tfrac{1}{2} : a^2:, h = -J^0, f =-\tfrac{1}{2} :(a^* )^2:,$$ the singlet algebra is isomorphic to the parafermion vertex algebra  $N_{-1/2}(\mathfrak{sl}_2)$ and  $H$ is a scalar multiple of the parafermion  generator   $W^3$ in \cite[Section 2]{DLY}:
\bea    W^3&=&  k^2 h(-3){\bf 1}  + 3k h(-2) h(-1){\bf 1}  + 2 h(-1) ^3{\bf 1}  - 6 k h(-1) e(-1) f(-1){\bf 1} \nonumber \\
&&  + 3 k^2 e(-2) f(-1) {\bf 1} - 3 k^2 f(-2) e(-1)  {\bf 1}  \quad (k=-1/2),\nonumber \eea
 (see also  \cite[Section 4]{R10}).

We have that  $\mathcal L^{HVir} _{c=-1} = \mathcal L^{Vir} _{c=-2} \otimes M_1(1) $, where  $\mathcal L^{Vir} _{c=-2}$ denotes the simple Virasoro vertex algebra of central charge $c=-2$ generated by $L$.

 \begin{theorem} \label{app} We have:
 \item[(1)]$M_1(\bm{\lambda}, \bm{\mu})$ is an cyclic  $\mathcal L^{HVir} _{c=-1}$--module. 
 \item[(2)]  $L(d,\bm{\lambda}, \bm{\mu}) $ is an irreducible $\mathcal L^{HVir}_{c=-1}$--module.
 \item[(3)] $M_1(\bm{\lambda}, \bm{\mu})$ and $L(d,\bm{\lambda}, \bm{\mu}) $ are not tensor product  $\mathcal L^{HVir}_{c=-1}$--modules. Moreover, there is a nilpotent subalgebra $\mathcal P$ of $\mathcal H$  and $1$--dimensional $\mathcal P$--module $U$ such that $M_1(\bm{\lambda}, \bm{\mu}) = \mbox{Ind} _{\mathcal P    } ^{\mathcal H} U$.
 \end{theorem}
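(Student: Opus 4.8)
The overall strategy is to transfer the already-known $\mathcal{W}_{1+\infty,c=-1}$--module structure down to the subalgebra $\mathcal{L}^{HVir}_{c=-1}$, the point being that the extra singlet generator $H=W^3$ becomes \emph{redundant} once it acts on the Whittaker vector $\wh$. Recall from $\mathcal{W}_{1+\infty,c=-1}\cong\mathcal M(2)\otimes M_1(1)$ that $\mathcal W_{1+\infty,c=-1}$ is generated by the modes of $J^0$ (spanning $M_1(1)$) together with those of $L$ and $H$ (spanning $\mathcal M(2)$), while $\mathcal H$ is generated by the modes of $J^0$ and $J^1$, equivalently $J^0$ and $L$. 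Set $\mathcal M=M_1(\bm\lambda,\bm\mu)$, which by Theorem \ref{structure-equivalence} satisfies $\mathcal M=U(\mathcal W_{1+\infty,c=-1})\wh$.

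The crux of the whole theorem is the single identity
$$ H(s)\,\wh \in U(\mathcal H)\,\wh \qquad (s\in\Z), $$
which makes precise the slogan that the singlet action on $\wh$ is carried by its Virasoro subalgebra; I expect this to be the main obstacle. I would prove it by a direct computation: expand $H(s)$ through the explicit formula for $W^3$ in terms of $e=\tfrac12{:}a^2{:}$, $h=-J^0$, $f=-\tfrac12{:}(a^*)^2{:}$, and then apply the Weyl--Whittaker relations $a(n)\wh=\lambda_n\wh$, $a^*(n)\wh=\mu_n\wh$ (scalars for $n\le N$, zero beyond). These relations collapse all sufficiently positive modes inside $H(s)$ to scalars, so that $H(s)\wh$ reduces to a polynomial in the modes $J^0(k),L(k)$ applied to $\wh$. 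Granting this, part (1) follows by a stability argument. Since $\mathcal M(2)$ is the commutant of $M_1(1)$ we have $[J^0(n),H(s)]=0$, while $[L(n),H(s)]=(2n-s)H(n+s)$ because $H$ is primary of weight $3$; hence for $u\in U(\mathcal H)$ the commutator $[H(s),u]$ is a $U(\mathcal H)$--combination of modes $H(s')$. By induction on the length of $u$ this rewrites $H(s)\,u\,\wh$ as a sum of terms in $U(\mathcal H)\,H(s')\wh$, each lying in $U(\mathcal H)\wh$ by the key identity. Thus $U(\mathcal H)\wh$ is a $\mathcal W_{1+\infty,c=-1}$--submodule containing $\wh$, and by cyclicity equals $\mathcal M$.

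For part (2) it suffices to show $[\wh]\in U(\mathcal H).[v]$ for every nonzero $[v]\in L(d,\bm\lambda,\bm\mu)$, since then $U(\mathcal H).[v]\supseteq U(\mathcal H).[\wh]=L(d,\bm\lambda,\bm\mu)$ by part (1). I would produce $[\wh]$ from $[v]$ by the lowering procedure already used in Proposition \ref{whit-opis} and Theorem \ref{irred-description}, after checking, again via the key identity, that the derivation-type operators $\widehat E$ occurring there act on $\mathcal M$ through $U(\mathcal H)$; alternatively one matches the resulting $\mathcal H$--module with an irreducible restricted Heisenberg--Virasoro module in the classification of \cite{TYZ}. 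Either route gives irreducibility.

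Finally, part (3). That $\mathcal M$ and $L(d,\bm\lambda,\bm\mu)$ are not of tensor-product form is the argument of Proposition \ref{non-isom}: in a module $Z_1\otimes Z_2$ over $\mathcal L^{Vir}_{c=-2}\otimes M_1(1)$ with $Z_2$ a Heisenberg Whittaker module every vector is locally finite under each $J^0(k)$, $k>0$, whereas under the hypotheses $n>0$, $\lambda_n,\mu_m\ne0$ the vectors $[a^*(0)^i\wh]$ are linearly independent, so $[\wh]$ fails to be locally finite for $J^0(n)$. For the induced realization, set $\mathcal P=\mbox{span}_{\C}\{J^0(k),J^1(k)\mid k\ge1\}\oplus\C C_1\oplus\C C_2$; this is a positively graded, hence nilpotent, subalgebra of $\mathcal H$, the Whittaker relations make $U=\C\wh$ a one--dimensional $\mathcal P$--module, and part (1) gives surjectivity of the canonical map $\mbox{Ind}_{\mathcal P}^{\mathcal H}U=U(\mathcal H)\otimes_{U(\mathcal P)}U\twoheadrightarrow\mathcal M$. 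Injectivity (freeness) I would settle by a graded--dimension count, comparing the character of $U(\mathcal H_{\le0})$, where $\mathcal H_{\le0}=\mbox{span}_{\C}\{J^0(-k),J^1(-k)\mid k\ge0\}$, with that of $\mathcal M\cong M=\C[a(-n),a^*(-m)\mid n\ge1,\ m\ge0]$; the two characters agree, yielding $\mathcal M=\mbox{Ind}_{\mathcal P}^{\mathcal H}U$.
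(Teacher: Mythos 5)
Your overall strategy --- show that the singlet generator $H$ acts on $\wh$ through $U(\mathcal H)$, propagate this by commutators, and then match $M_1(\bm\lambda,\bm\mu)$ with an induced restricted Heisenberg--Virasoro module from \cite{TYZ} --- is exactly the paper's strategy, and your stability argument for part (1) (given the key identity) and your local-finiteness argument for the first half of (3) are sound. However, two steps are genuinely wrong or gapped. First, your choice $\mathcal P=\mbox{span}_{\C}\{J^0(k),J^1(k)\mid k\ge1\}\oplus\C C_1\oplus\C C_2$ does not make $\C\wh$ a one-dimensional $\mathcal P$--module once $n>0$: for instance $J^0(n)\wh$ contains the summand $\lambda_n a^*(0)\wh$, which is not proportional to $\wh$ --- this is precisely the failure of local finiteness you exploit in the non-tensor-product half of (3), so the two halves of your part (3) contradict each other. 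The paper induces from the strictly smaller subalgebra $\mathcal P^{(n)}$ spanned by $J^0(r+1),J^1(r)$ with $r\ge n$ together with $C_1,C_2$; only those modes act on $\wh$ by scalars (relations (\ref{rel-0})--(\ref{rel-2})). Relatedly, your graded-dimension count cannot be run as stated: $M_1(\bm\lambda,\bm\mu)$ is not $L(0)$-graded (Whittaker vectors are not $L(0)$-eigenvectors), and the PBW complement of the correct $\mathcal P^{(n)}$ is spanned by $J^0(r)$, $r\le n$, and $J^1(r)$, $r\le n-1$, which contains positive modes and does not match your $\mathcal H_{\le 0}$. The paper instead obtains freeness and simplicity of the quotients directly from \cite[Section 7]{TYZ}.

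Second, the ``key identity'' $H(s)\wh\in U(\mathcal H)\wh$ is the real content of part (1), and ``expand $H(s)$ and collapse positive modes to scalars'' does not deliver it: typical terms such as $h(-1)e(-1)f(-1)$ in $W^3$ produce monomials $a(-k)a(-k')a^*(-l)a^*(-l')\wh$ that are not visibly in $U(\mathcal H)\wh$. The paper proves the identity by combining (i) the computation that sufficiently positive modes of $H$ act on $\wh$ by scalars, $H(3n+3m+s)\wh=\delta_{s,0}\,q\,\wh$ with $q\ne0$, with (ii) the relation $\tfrac34 H(-6)\mathbf 1-L(-2)H(-4)\mathbf 1+\tfrac32 L(-3)H=0$ in $\mathcal M(2)$ and the descent argument of \cite[Lemma 3.1]{T2}, which together give $\mathcal M(2).\wh=\langle L\rangle\wh$. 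Without an ingredient of this kind (a null relation letting you lower the mode index of $H$ at the cost of Virasoro modes), your computation has no mechanism to terminate, so this step is a genuine gap rather than a routine verification.
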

 
 \begin{proof}
 The field $H$ can be expressed using $J^0, J^1$ and it contains the non-trivial summand $J^0(-1)^3{\bf 1}$. Set $H(i) := H_{i+2}$, so we have $$H(z) =  Y(H, z) = \sum_{i \in {\Z} } H(i) z^{-i-3}. $$ 
 By a direct calculation we get for $s \in {\Z}_{\ge 0}$: 
 $$H(3n + 3m+s ) \wh =  \delta_{s,0} q \wh $$
 for certain $q \in {\C}$, $  q \ne 0$.

  Using the following relation in $\mathcal M(2)$ 
  $$  \frac{3}{4} H(-6){\bf 1}  - L(-2)   H(-4){\bf 1}  +\frac{3}{2} L(-3) H  =0,$$
 and using the  arguments as in  \cite[Lemma 3.1]{T2} we get that
 $$ \mathcal M(2). \wh = \langle L \rangle \wh, $$
 so $\mathcal M(2). \wh$ is isomorphic to the Virasoro submodule obtained by the action of the Virasoro generator $L$ on  the Whittaker vector $\wh$.
 Since  by Theorem \ref{structure-equivalence} $M_1(\bm{\lambda}, \bm{\mu})$ is a cyclic $\mathcal M(2) \otimes M_1(1)$--module, we conclude that $M_1(\bm{\lambda}, \bm{\mu})$ is a cyclic $\mathcal L^{HVir} _{c=-1}$--module. This proves assertion (1). The assertion (2) is a consequence of (1).

Let us prove assertion (3).
For   simplicity we consider the case $n + 1 \ge m$. Other cases can be  treat similarly.

 By a direct calculation we get for $s \in {\Z}_{>0}$:
 \bea  && J^0(n+m +s) \wh = J^1(n+m +s) \wh = 0,  \label{rel-0} \\
  && J^0( n+m) \wh = a_{m} \wh, \dots, J^0(n+1) \wh = a_{1} \wh, \label{rel-1} \\
 && J^1( n+m) \wh= b_{m+1} \wh, \dots, J^1 (n) \wh = b_1 \wh, \label{rel-2} \eea
 where $a_i, b _i  \in  {\C}$, $a_{m}, b_{ m+1} \ne 0$,  and
 $J^0(n) \wh$, $J^1(n-1) \wh$ are not proportional to $\wh$.

   Consider the nilpotent  subalgebra $\mathcal P^{(n)}$ of $\mathcal H$ spanned by $J^0(r+1), J^1(r)$, $r \ge n$ and $C_1, C_2$.  Then ${\C}\wh$ is the $1$-dimensional $\mathcal P^{(n)}  $--module satisfying relations (\ref{rel-0})-(\ref{rel-2}) and 
 $C_1 \wh = 2 \wh$, $C_2 \wh = -\wh$.
   Then $M_1(\bm{\lambda}, \bm{\mu})$ and    $L(d, \bm{\lambda}, \bm{\mu})$ are quotients of the universal $\mathcal H$--module:
   $$ \mbox{Ind} _{\mathcal P^{(n)}  }^{\mathcal H} {\C} \wh. $$
   These universal modules and their simple quotients  appeared  in a slightly general  form in \cite[Section 7]{TYZ}, where it was proved that they are not tensor product modules (see \cite[Example  7.5, Remark A.4]{TYZ}). In particular, it was shown that for each $d \in {\C}$: 
  $$ \frac{ \mbox{Ind} _{\mathcal P^{(n)}  }^{\mathcal H} } {U(\mathcal H)(J^0(0)-d )\wh}$$
  is a simple, restricted $\mathcal H$--module. This easily implies that 
  as   $\mathcal H$--modules:
  $$M_1(\bm{\lambda}, \bm{\mu}) = \mbox{Ind} _{\mathcal P^{(n)}   }^{\mathcal H} {\C} \wh, 
   \quad  L(d, \bm{\lambda}, \bm{\mu}) = \frac{M_1(\bm{\lambda}, \bm{\mu}) }{U(\mathcal H)(J^0(0)-d )\wh}.$$
   The proof follows.
 \end{proof}

\begin{corollary}   $\mathcal W_{1+\infty, c=-1}$--modules $\widetilde \rho_s (L(d,\bm{\lambda}, \bm{\mu}))$ are typical.
\end{corollary}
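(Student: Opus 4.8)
The plan is to reduce the statement to Theorem \ref{app}(2) together with the single structural fact that the spectral-flow automorphism $\widetilde\rho_s$ restricts to an automorphism of the Heisenberg-Virasoro subalgebra $\mathcal L^{HVir}_{c=-1}$, and then to use that twisting an irreducible module by a vertex-algebra automorphism preserves irreducibility. First I would record that, by Theorem \ref{app}(2), $L(d,\bm{\lambda},\bm{\mu})$ is irreducible as an $\mathcal L^{HVir}_{c=-1}$--module, i.e.\ it is typical; the target $\widetilde\rho_s(L(d,\bm{\lambda},\bm{\mu}))$ is by definition obtained from it by letting $v\in\mathcal W_{1+\infty,c=-1}$ act through $\widetilde\rho_s(v)$.

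The main step is to show that $\widetilde\rho_s$ preserves the subalgebra generated by $J^0$ and $J^1$. I would compute the action directly in the $\widehat{\mathfrak{gl}}$--picture: writing the modes as $J^0(m)=\sum_j E_{j-m,j}$ and $J^1(m)=-\sum_j j\,E_{j-m,j}$ and applying $\widetilde\rho_s(E_{i,j})=E_{i-s,j-s}$, a reindexing of the summation gives
$$ \widetilde\rho_s(J^0(m))=J^0(m), \qquad \widetilde\rho_s(J^1(m))=J^1(m)-s\,J^0(m), $$
up to a shift $J^0(0)\mapsto J^0(0)+s$ and a scalar shift of $J^1(0)$ produced by the central extension. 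Equivalently, the same formulas follow from the $\Delta$--operator realization of $\widetilde\rho_s$ in Proposition \ref{spectral-flow-rest}, applied to the generating vectors $a^*(0)a(-1)\mathbf{1}$ and $a^*(-1)a(-1)\mathbf{1}$. Either computation shows that the span of $\{J^0(m),J^1(m)\}$ together with the central elements is preserved, so $\widetilde\rho_s$ restricts to a vertex-algebra automorphism of $\mathcal L^{HVir}_{c=-1}$ (indeed the familiar spectral-flow automorphism sending the Virasoro field $L\mapsto L-s\,J^0$ and fixing the Heisenberg current $J^0$).

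Finally, since $\widetilde\rho_s|_{\mathcal L^{HVir}_{c=-1}}$ is an automorphism, the restriction of $\widetilde\rho_s(L(d,\bm{\lambda},\bm{\mu}))$ to $\mathcal L^{HVir}_{c=-1}$ is precisely the automorphism-twist of the restriction of $L(d,\bm{\lambda},\bm{\mu})$; as the twist of an irreducible module by an automorphism is again irreducible, the module $\widetilde\rho_s(L(d,\bm{\lambda},\bm{\mu}))$ is irreducible over $\mathcal L^{HVir}_{c=-1}$, that is, typical. The main obstacle I anticipate is the bookkeeping in the key step: one must verify that the formally infinite sums defining $\widetilde\rho_s(J^0(m))$ and $\widetilde\rho_s(J^1(m))$ are correctly regularized and that the resulting central and zero-mode shifts are accounted for, so that $\widetilde\rho_s$ genuinely lands inside $\mathcal L^{HVir}_{c=-1}$ rather than mapping out of it; the cleanest way to settle this is to carry out the verification through the $\Delta$--operator of Proposition \ref{spectral-flow-rest}, where these shifts appear transparently.
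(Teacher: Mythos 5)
Your argument is correct and follows the route the paper intends: the corollary is stated as an immediate consequence of Theorem \ref{app}(2), with the spectral-flow compatibility supplied by Proposition \ref{spectral-flow-rest} (equivalently, your explicit check that $\widetilde\rho_s$ fixes the span of the $J^0(m)$, $J^1(m)$ and the central elements up to the zero-mode shifts, so it restricts to an automorphism of the Heisenberg--Virasoro algebra and hence preserves irreducibility of the restriction). Your verification via the $\Delta$--operator just makes explicit what the paper leaves implicit, so this is essentially the same proof.
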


\vskip10pt {\footnotesize{}{ }\textbf{\footnotesize{}D.A.}{\footnotesize{}:
Department of Mathematics, Faculty of Science, University of Zagreb, Bijeni\v{c}ka 30,
10 000 Zagreb, Croatia; }\texttt{\footnotesize{}adamovic@math.hr}{\footnotesize \par}


\textbf{\footnotesize{}V.P.}{\footnotesize{}: Department of Mathematics, Faculty of Science,
University of Zagreb, Bijeni\v{c}ka 30, 10 000 Zagreb, Croatia; }\texttt{\footnotesize{}vpedic@math.hr}{\footnotesize \par}




\begin{thebibliography}{10}
 
\bibitem{A-2001} D. Adamovi\' c, Representations of the vertex algebra $W_{1+ \infty}$  with a negative integer central charge, Comm. Algebra 29 (2001) no. 7, 3153--3166.

\bibitem{A-singlet} D. Adamovi\' c,  Classification of irreducible modules of certain subalgebras of free boson vertex algebra, J. Algebra 270 (2003), 115–132.

\bibitem{ALZ} D. Adamovi\'{c},   R. Lu, K. Zhao, Whittaker modules
for the affine Lie algebra $A_{1}^{(1)}$, Adv. Math. 289 (2016),
438\textendash 479.

\bibitem{ALPY} D. Adamovi\' c, C. H. Lam, V. Pedi\' c, N. Yu, On irreducibility of modules of Whittaker type for cyclic orbifold vertex algebras, J. Algebra 539 (2019), 1--23.


\bibitem{AdP-2019}  D. Adamovi\' c, V. Pedi\' c, On fusion rules and intertwining operators for the Weyl vertex algebra,    J. Math. Phys. 60 (2019), no. 8, 081701, 18 pp.   

\bibitem{AdM-JMP} D. Adamovi\' c, A. Milas,  Logarithmic intertwining operators and $\mathcal W(2,2p-1)$--algebras. J. Math. Phys. 48(7), 073503 (2007) 20 pp   

\bibitem{BM} P. Batra, V. Mazorchuk, Blocks and modules for Whittaker pairs, J. Pure Appl. Algebra 215, no. 7 (2011) 1552--1568.

\bibitem{CM04} T. Creutzig, A. Milas, False theta functions and the Verlinde formula, Adv. Math. 262,   (2014) 520--545

 \bibitem{DLY} C. Dong, C.H. Lam and H. Yamada, W-algebras related to parafermion algebras, J. Algebra 322
(2009)  2366–2403.

\bibitem{FKRW} E. Frenkel, V. Kac, A. Radul, W.  Wang, $\mathcal W_{1+ \infty}$ algebra and $\mathcal W(gl_N)$ with Central Charge $N$,  Comm. Math. Phys. 1995, 170, 337--357. 
 
  \bibitem{Gaiotto} D. Gaiotto, Asymptotically free N=2 theories and irregular conformal blocks, arXiv:0908.0307
 
\bibitem{GS}  M. D. Gould and N. I. Stoilova: Casimir invariants and characteristic indentities for $\mathfrak{gl}(\infty)$,  Journal of Mathematical Physics 38, 4783 (1997); doi:10.1063/1.532123

 \bibitem{KR0} V. Kac, A. Radul,  Quasi--finite highest weight modules over the Lie algebra of
differential operators on the circle, Comm. Math. Phys. 157 (1993), 429--457

 
 
 
\bibitem{KR} V. Kac, A. Radul, Representation theory of the vertex
algebra $W_{1+\infty}$, Transform. Groups 1 (1996), no. 1-2, 41\textendash 70.

 \bibitem{LL}J. Lepowsky, H. Li, Introduction to vertex operator algebras
and their representations, Progress in Math. Vol. 227 Boston: Birkhauser,
2004.

\bibitem{R10} D. Ridout,  $sl(2)_{-1/2}$ and the Triplet Model, Nucl. Phys.B835 (2010) 314-342 
 

\bibitem{T1} K. Tanabe, Simple weak modules for the fixed point subalgebra
of the Heisenberg vertex operator algebra of rank $1$ by an automorphism
of order $2$ and Whittaker vectors, Proc. Amer. Math. Soc. 145 (2017),
no. 10, 4127\textendash 4140.

\bibitem{T2} K. Tanabe, Simple weak modules for some subalgebras of the Heisenberg vertex algebra and Whittaker vectors, Algebr. Represent. Theory 23 (2020), no. 1, 53\textendash66.


\bibitem{Wa} W. Wang, $W_{1+\infty}$--algebra,  $W_3$--algebra, and Friedan--Martinec--Shenker bosonization
Comm. Math. Phys., 195 (1998), pp. 95-111

\bibitem{TYZ} H. Tan, Y. Yao, K. Zhao, Simple restricted modules over the Heisenberg-Virasoro algebra as VOA modules,   arXiv:2110.05714v2 [math.RT]
\end{thebibliography}
\end{document}